\numberwithin{equation}{section}
\definecolor{gray}{rgb}{0.6,0.6,0.6}
\newtheorem{defn}{Definition}[section]
\newtheorem{thm}{Theorem}[section]
\newtheorem{lm}{Lemma}[section]
\newtheorem{assume}{Assumption}[section]
\newtheorem{remark}{Remark}[section]
\newtheorem{cor}{Corollary}[section]
\newcommand\Var {\mathbb{V}}
\colorlet{shadecolor}{blue!20}
\begin{document}

\title{Sample Complexity of Sample Average Approximation for Conditional Stochastic Optimization}
\author{
	Yifan Hu\thanks{Department of Industrial and Enterprise Systems Engineering (ISE), University of Illinois at Urbana-Champaign (UIUC), Urbana, IL (\email{yifanhu3@illinois.edu},
	\email{xinchen@illinois.edu},
	\email{niaohe@illinois.edu}.)}
	\and
	Xin Chen\footnotemark[1]
	\and
	Niao He\footnotemark[1]
}
\maketitle

\begin{abstract}
In this paper, we study a class of stochastic optimization problems, referred to as the \emph{Conditional Stochastic Optimization} (CSO), in the form of $\min_{x \in \mathcal{X}} \EE_{\xi}f_\xi\Big({\EE_{\eta|\xi}[g_\eta(x,\xi)]}\Big)$, {which finds a wide spectrum of applications including portfolio selection, reinforcement learning, robust learning, causal inference and so on. Assuming availability of samples from the distribution $\PP(\xi)$ and samples from the conditional distribution $\PP(\eta|\xi)$,} we establish the sample complexity of the sample average approximation (SAA) for CSO, under a variety of structural assumptions, such as Lipschitz continuity, smoothness, and error bound conditions. We show that the total sample complexity improves from $\cO(d/\eps^4)$ to $\cO(d/\eps^3)$ when assuming smoothness of the outer function, and further to $\cO(1/\eps^2)$ when the empirical function satisfies the quadratic growth condition. We also establish the sample complexity of a modified SAA, when $\xi$ and $\eta$ are independent. Several numerical experiments further support our theoretical findings. 
\end{abstract}

\begin{keywords}
stochastic optimization, sample average approximation, large deviations theory
\end{keywords}

\begin{AMS}
	90C15, 90C30, 90C59
\end{AMS}

\section{Introduction}
Decision-making in the presence of uncertainty  has been a fundamental and long-standing challenge in many fields of science and engineering. In recent years, extensive research efforts have been devoted to the design and theory of efficient algorithms for solving the classical stochastic optimization (SO) in the form of
\begin{equation}
\label{eq:SO}
\min_{x \in \mathcal{X}}\; F(x):=\EE_{\xi}[f(x,\xi)], 
\end{equation} 
ranging from convex to non-convex objectives, from first-order to second-order methods, and from sub-linear to linear convergent algorithms, e.g., see ~\cite{BCN18} and references therein for a comprehensive survey. Here $\mathcal{X}\subseteq \RR^d$ is the decision set and $f(x,\xi)$ is some cost function dependent on the random vector $\xi$. In general, (\ref{eq:SO}) cannot be computed analytically or solved exactly, even when the underlying distribution of the random vector $\xi$ is known, and one has to resort to Monte Carlo sampling techniques. 

An important Monte Carlo method -- the sample average approximation (SAA, a.k.a., the empirical risk minimization in machine learning community) -- is widely used to solve (\ref{eq:SO}), assuming availability of samples from the underlying distribution. SAA works by solving the approximation of the original problem: 
\begin{equation}\label{eq:SO-SAA}
\min_{x \in \mathcal{X}}\; \hat{F}_n(x):= \frac{1}{n}\sum_{i=1}^n f(x,\xi_i),
\end{equation}
where $\xi_1,\ldots,\xi_n$ are i.i.d. samples generated from the distribution of $\xi$. Note that $\hat F_n(x)$ converges pointwise to $F(x)$ with probability $1$ as $n$ goes to infinity. Finite-sample convergence of SAA for SO has been well established.  The seminal work by~\cite{kleywegt2002sample} proved that for general Lipschitz continuous objectives, SAA requires  a sample complexity  of $\cO(d/\eps^2)$ to obtain an $\epsilon$-optimal solution to the stochastic optimization problem.  \cite{shalev2010learnability} proved that for strongly convex and Lipschitz continuous objectives,  the sample complexity of SAA is $\cO(1/\eps)$. Detailed results can be found in the books, e.g., \cite{shapiro2009lectures} and \cite{shalev2014}. 

More generally, SAA is also a popular computational tool for solving multi-stage stochastic programming (MSP) problems. In its general form, a MSP  finds a sequence of decisions $\{x_t\}_{t=0}^T$ that minimizes the nested expectation in the following form:
\begin{equation}\label{eq:multistage}
\min_{x_0\in \mathcal{X}_0} f_0(x_0)+\EE_{\xi_1}\bigg[\inf_{x_1
}\; f_1(x_1,\xi_1) + \EE_{\xi_2|\xi_1}\Big[ \cdots+ \EE_{\xi_T|\xi_{1:T-1}}\big[\inf_{x_T
}\; f_T(x_T,\xi_T)\big]\Big] \bigg],
\end{equation}
where $T$ is the number of decision periods, $\xi_1,\ldots, \xi_T$ can be considered as a random process, and the decision $x_t$ is a function of the history of the process up to time $t$. Similarly, the SAA approach works by first generating a large scenario tree with conditional sampling and then processing with stage-based or scenario-based decomposition methods \cite{pereira1991multi,rockafellar1991scenarios,ruszczynski1997decomposition}. When extended to the multi-stage case, the finite sample analysis indicates that the total number of samples, or scenarios, to achieve an $\epsilon$-optimal solution to the original problem (\ref{eq:multistage}) grows exponentially as the number of stages increases~\cite{shapiro2005complexity,shapiro2009lectures}. In particular, for general three-stage stochastic problems,  the sample complexity of SAA cannot be smaller than $\cO(d^2/\eps^4)$; this holds true even if the cost functions in all stages are linear and the random vectors are stage-wise independent as discussed in~\cite{shapiro2006complexity}. 

In this paper, we study an intermediate class of problems, referred to as the \emph{Conditional Stochastic Optimization} (CSO), that sits in between the classical SO and the MSP. The problem of interest takes the following general form:
\begin{equation}
\label{pro:ori}
    \min_{x \in \mathcal{X}}\; F(x):=\EE_{\xi}\Big[f_\xi\Big({\EE_{\eta|\xi}[g_\eta(x,\xi)]}\Big)\Big].
\end{equation}
Here $\mathcal{X}$ is the domain of the decision variable $x\in\RR^d$; $f_\xi(\cdot):\RR^k\to\RR$ is a continuous cost function dependent on the random vector $\xi$, and $g_\eta(\cdot,\xi):\RR^d\to\RR^k$  is a vector-valued continuous cost function dependent on both random vectors $\xi$ and $\eta$. The inner expectation is with respect to $\eta$ given $\xi$, and the outer expectation is with respect to $\xi$. Similar to the classical stochastic optimization, we do not assume any knowledge on the underlying distribution of $\PP(\xi)$ nor the conditional distribution $\PP(\eta|\xi)$.  Instead, we assume availability of  samples  from the distribution $\PP(\xi)$ and samples from the conditional distribution $\PP(\eta|\xi)$ for any given $\xi$.

CSO is more general than the classical stochastic optimization as it captures dynamic randomness and involves conditional expectation. It takes the SO as a special case when $g_\eta(x,\xi)$ is an identical function. On the other hand, it is less complicated than the MSP (in particular the three-stage case with $T=3$) as it seeks for a static decision and does not subject to non-anticipativity constraints. 

The goal of this paper is to analyze the sample complexity of SAA for solving CSO, which can be constructed as follows based on \emph{conditional sampling}:
\begin{equation}
\label{pro:cosaa0}
   \min_{x \in \mathcal{X}}\; \hat F_{nm}(x):= \frac{1}{n}\sum_{i=1}^n f_{\xi_i}\bigg(\frac{1}{m}\sum_{j=1}^m g_{\eta_{ij}}(x, \xi_{i})\bigg),
\end{equation}
where  $\{\xi_i\}_{i=1}^n$ are i.i.d. samples generated from $\PP(\xi)$ and $\{\eta_{ij}\}_{j=1}^m$ are i.i.d. samples generated from the conditional distribution $\PP(\eta | \xi_i)$ for a given outer sample $\xi_i$.  We would like to examine the total number of samples $T=nm+n$ required for SAA~(\ref{pro:cosaa0}) to achieve an $\epsilon$-optimal solution to the original CSO problem~(\ref{pro:ori}).

We also consider a special case of the CSO problem (\ref{pro:ori}), when the random vectors $\xi$ and $\eta$ are independent:
\begin{equation}
\label{eq:imain}
    \min_{x \in \mathcal{X}}\; F(x):=\EE_{\xi}\Big[f_\xi\Big({\EE_{\eta}[g_\eta(x,\xi)]}\Big)\Big].
\end{equation}
One could still approximate (\ref{eq:imain}) by the SAA~(\ref{pro:cosaa0}), mimicking the conditional sampling scheme and using different samples $\{\eta_{i1},\ldots, \eta_{im}\}$ from the distribution of $\eta$ for each $\xi_i$. However, since the inner expectation is no longer a conditional expectation, there is no necessity to estimate the inner expectation with different realizations of $\eta$ for each $\xi_i$.  Hence, an alternative way to approximate  (\ref{eq:imain}) is through a modified SAA: 
\beq{eq:iSAA}
\min_{x \in \mathcal{X}}\; \hat F_{nm}(x):= \frac{1}{n}\sum_{i=1}^n f_{\xi_i}\bigg(\frac{1}{m}\sum_{j=1}^m g_{\eta_{j}}(x, \xi_{i})\bigg).
\eeq
where  $\{\xi_i\}_{i=1}^n$ are i.i.d. samples generated from the distribution of $\xi$ and $\{\eta_{j}\}_{j=1}^m$ are i.i.d. samples generated from the distribution of  $\eta$.
As a result, the component functions $f_{\xi_i}\Big(\frac{1}{m}\sum_{j=1}^m g_{\eta_{j}}(x, \xi_{i})\Big)$, $i=1,\ldots, n$ become dependent since they share the same $\{\eta_j\}_{j=1}^m$, making it very different from ~(\ref{pro:cosaa0}). In this case, the total number of samples becomes $T = n+m$. We refer to this sampling scheme as \emph{independent sampling}.

\subsection{Motivating Applications}
Notably, CSO can be used to model a variety of applications, including portfolio selection~\cite{hong2017kernel}, robust supervised learning~\cite{pmlr-v54-dai17a}, reinforcement learning~\cite{pmlr-v54-dai17a,pmlr-v80-dai18c}, personalized medical treatment~\cite{ikko2018}, instrumental variable regression~\cite{muandet2019dual}, and so on. We discuss some of these examples in details below. 

\paragraph{Robust Supervised Learning} 
Incorporation of priors on invariance and robustness into the supervised learning procedures is  crucial for computer vision and speech recognition~\cite{niyogi1998incorporating,bhagoji2018enhancing}. Taking image classification as an example, we would like to build a classifier that is both accurate and invariant to certain kinds of data transformation, such as rotation or perturbation. Let $\xi_1=(a_1,b_1),\cdots, \xi_n=(a_n,b_n)$ be a set of input data, where $a_i$ is the feature vector and $b_i$ is the label.  A plausible way to achieve such consistency is to consider the class of robust linear classifiers, say $f(x,x_0,\xi)=\EE_{\eta|\xi\sim\mu(\sigma(a))}[x^T\eta+x_0]$ for  given image data $\xi$, by averaging the prediction over all possible transformations $\sigma(a)$, and then finding the best fit by minimizing the expected risk: 
\begin{equation*}
\min_{(x,x_0)} \; \EE_{\xi=(a,b)}\Big[\ell\big(b,\EE_{\eta|\xi}[\eta^Tx+x_0]\big)\Big] +\frac{\nu}{2}\|x\|_2^2.
\end{equation*}
Here $\ell(\cdot,\cdot)$ is some loss function, $\nu>0$ is a regularization parameter, and $\mu(\cdot)$ is a given distribution (e.g., uniform) over the transformations.  Clearly, such problems belong to the category of CSO. 

\paragraph{Reinforcement Learning}
Policy evaluation is a fundamental task in Markov decision processes and reinforcement learning. Consider a discounted Markov
decision process characterized by the tuple $\cM: =(\cS,\cA,P,r,\gamma)$, where $\cS$ is a finite state space, $\cA$ is a finite action
space, $P(s,a,s')$ represents the (unknown)
state transition probability from state $s$ to $s'$ given action
$a$, $r(s,a):{\cal S}\times {\cal A}\to\RR$ is a reward function, and $\gamma \in (0,1)$ is a discount factor. Given a stochastic policy $\pi(a|s)$, the goal of the policy evaluation is to estimate the value function $V^\pi(s):={\mathbb E} \left[ {\left. \sum_{k=0}^\infty {\gamma^k r(s_k,a_k)}\right|s_0=s}\right]$ under the policy. It is well-known that $V^\pi(\cdot)$ is a fixed point of the Bellman equation~\cite{bertsekas2005dynamic},
\begin{equation*}
V^\pi(s) = \EE_{s^\prime|a,s} [r(s,a)+\gamma V^\pi(s^\prime)].
\end{equation*}
To estimate the value function $V^\pi(s)$, one could resort to minimizing the mean squared Bellman error \cite{sutton2008, pmlr-v54-dai17a}, namely:
\begin{equation*}
\min_{V(\cdot):\cS\to\RR}\; \EE_{s\sim\mu(\cdot),a\sim\pi(\cdot|s)}\big[\big(r(s,a)-\EE_{s^\prime|a,s}[V(s)-\gamma V(s^\prime)]\big)^2\big].
\end{equation*}
Here $\mu(\cdot)$ is the stationary distribution. This minimization problem can be viewed as a special case of CSO. Recently, \cite{pmlr-v80-dai18c} showed that finding the optimal policy can also be formulated into an optimization problem in a similar form by exploiting the smoothed Bellman optimality equation. Again, the resulting problem falls under the category of CSO. 

\paragraph{Uplift Modeling} Uplift modelling aims at estimating  individual treatment effects, and it has been widely studied in causal inference literature and used for personalized medicine treatment and targeted marketing ~\cite{jaskowski2012uplift,ikko2018}. In an individual uplift model, the goal is to estimate the effect of a treatment on an individual with feature vector $x$, which could be represented by  $u(x):=\EE[y|x,t=1]-\EE[y|x,t=-1]$. Here $t\in\{\pm 1\}$ represents whether a treatment has been given to an individual, and $y\in\cY\subseteq\RR$ represents the outcome. In practice, obtaining joint labels $(y,t)$ can be difficult, whereas obtaining one label (either $t$ or $y$) of the individual is relatively easier. \cite{ikko2018} considered an individual uplift model that assumes availability of only one label  from the joint labels, and estimates the unknown label with 
$
p(y | x)=\sum_{t=\{\pm 1\}} p(y | x, t) p(t | x)
$.
They showed that the individual uplift $u(x)$ is equivalent to the optimal solution to the following least-squares problem:
\begin{equation*}
\min_{u\in L^2(p)}\; \EE_{x\sim p(x)} \left[(\EE_{w|x}[w]\cdot u(x)-2\EE_{z|x}[z])^2\right],    
\end{equation*}
where $L^2(p) = \{f:\mathcal{X}\rightarrow \RR|\; \EE_{x\sim p(x)} [f(x)^2] <\infty\}$ is a function space, and $w$ and $z$ are two auxiliary random variables, whose conditional density are given by
$p\left(z=z_{0} | x\right)=\frac{1}{2} p\left(y=z_{0} | x\right)+\frac{1}{2} p\left(y=-z_{0} | x\right)$, $p\left(w=w_{0} | x\right)=\frac{1}{2} p\left(t=w_{0} | x\right)+\frac{1}{2} p\left(t=-w_{0} | x\right)$.
If we further restrict $u(\cdot)$ to a finite dimensional parameterization, then the above problem becomes a special case of CSO.  

For these applications, there are many settings in which samples can be generated according to our assumptions. For instance, in robust supervised learning and uplift modeling, there are multiple samples from $\PP(\eta|\xi)$ available for any given $\xi$.

\subsection{Related Work} 
A closely related class of problems, called \emph{stochastic composition optimization}, has been extensively studied in the literature; see, e.g.,~\cite{Yermolyev1971,polyak1978minimization,ermoliev2013sample,wang2017stochastic}, to name just a few. This class of problems takes the following form: 
\beq{eq:wang}
\min_{x \in \mathcal{X}}\;  \mathbf{f}\circ \mathbf{g}(x):=\EE_{\xi}\Big[f_\xi\Big({\EE_{\eta}[g_\eta(x)]}\Big)\Big],
\eeq
where $\mathbf{f}(u):=\EE_{\xi}[f_\xi(u)]$, and $\mathbf{g}(x):=\EE_{\eta}[g_\eta(x)]$. Although the two problems, (\ref{eq:wang}) and (\ref{pro:ori}), share some similarities in that both objectives are represented by nested expectations, they are fundamentally different in two aspects: i) the inner randomness $\eta$ in \rf{pro:ori} is conditionally dependent on the outer randomness $\xi$, while the inner expectation in \eqref{eq:wang}  is taken over the marginal distribution of $\eta$; ii) the inner random function $g_{\eta}(x,\xi)$ in (\ref{pro:ori}) depends on both $\xi$ and $\eta$. As a result, unlike (\ref{eq:wang}), the CSO problem \eqref{pro:ori} cannot be formulated as a composition of two deterministic functions due to the dependence between the inner and outer function. Another key distinction from (\ref{eq:wang}) is that we assume availability of samples from the distributions $\PP(\xi)$ and $\PP(\eta|\xi)$, rather than samples from the joint distribution $\PP(\xi,\eta)$. These two distinctions further lead to a drastic difference in the SAA construction and the sample complexity analysis of these two types of problems, as we will show in the rest of the paper. 

When solving either (\ref{eq:wang}) or (\ref{pro:ori}), most of the existing work is devoted to developing stochastic oracle-based algorithms and their convergence analysis for solving these problems.  Related work includes  two-timescale  \cite{polyak1978minimization,Yermolyev1971,wang2017stochastic,wang2016accelerating} and single-timescale \cite{ghadimi2018single} stochastic approximation algorithms  for solving the problem (\ref{eq:wang}), variance-reduced algorithms for solving the SAA counterpart of (\ref{eq:wang}) \cite{pmlr-v54-lian17a, AAAI1817203, shen2018asynchronous}, and a primal-dual functional stochastic approximation algorithm for solving the problem (\ref{pro:ori})~\cite{pmlr-v54-dai17a}. These methods usually require convexity of the objective in order to obtain an $\eps$-optimal solution. Our work differs from the ones listed above in that we mainly focus on establishing the sample complexity of SAA itself, rather than designing efficient algorithms to solve the resulting SAA. 

We point out that our paper has the same strain as a series of papers~\cite{kleywegt2002sample,shapiro2005complexity,shapiro2006complexity,pagnoncelli2009sample,ermoliev2013sample, dentcheva2017statistical,bertsimas2018robust,liu2019sample}, centered at the sample average approximation approach for stochastic programs. In particular,~\cite{dentcheva2017statistical} derived a central limit theorem result for the SAA of the stochastic composition optimization problem (\ref{eq:wang}) and ~\cite{ermoliev2013sample} established the rate of convergence. Despite these developments, the study of the basic SAA approach and its finite sample complexity analysis remains unexplored for solving the general CSO problem (\ref{pro:ori}) and even the special case (\ref{eq:imain}). We aim to close this gap in this paper. 

\subsection{Contributions}
In this paper, we formally analyze the sample complexity of the corresponding SAA approach for solving CSO. Our contributions are summarized as follows and in Table~\ref{tab:result}. 
\begin{enumerate}[leftmargin=*]
    \item[(a)]  We establish the first sample complexity results of the SAA in (\ref{pro:cosaa0}) for the CSO problem (\ref{pro:ori}) under several structural assumptions:
    \begin{itemize}
        \item[(i)] Both $f_\xi$ and $g_\eta$ are Lipschitz continuous;
        \item[(ii)] In addition to (i), $f_\xi$ is Lipschitz smooth;
        \item[(iii)] In addition to (i), the empirical function satisfies the H\"olderian error bound condition;
        \item[(iv)] In addition to (i), $f_\xi$ is Lipschitz smooth and the empirical function satisfies the H\"olderian error bound condition.
    \end{itemize}
None of these assumptions require convexity\footnote{ However, when solving the SAA problem itself, convexity conditions are often necessary for obtaining a global minimizer.} of the underlying objective function. 
Note that the H\"olderian error bound (HEB) condition~\cite{Bolte2017}, which includes the quadratic growth (QG) condition~\cite{karimi2016linear} as a special case, is a much weaker assumption than strong convexity, and holds for many nonconvex problems in machine learning applications~\cite{pmlr-v80-charles18a}. We show that, for general Lipschitz continuous problems, the sample complexity of SAA improves from $\cO(d/\epsilon^4)$  to $\cO(d/\epsilon^3)$  when assuming smoothness; for problems satisfying the QG condition, the sample complexity of SAA improves from $\cO(1/\epsilon^3)$ to $\cO(1/\epsilon^2)$ when assuming smoothness. This is very different from the classical results on the SO and the MSP, where Lipschitz smoothness plays no essential role in the sample complexity ~\cite{kleywegt2002sample,shapiro2006complexity}.  Our results are built on the traditional large deviation theory and stability arguments, while leveraging several bias-variance decomposition techniques, in order to fully exploit the specific structure of CSO and other structural assumptions.

\item[(b)] We analyze the sample complexity of the modified SAA in (\ref{eq:iSAA}) for the special case (\ref{eq:imain}),  where $\xi$ and $\eta$ are independent. We show that the total sample complexity of the modified SAA is  $\cO(d/\eps^2)$ for the general Lipschitz continuous problems. The existence of the QG condition only improves the complexity of the outer samples from $\cO(d/\eps^2)$ to $\cO(1/\eps)$, yet the overall complexity is dominated by the complexity of the inner samples, which is $\cO(d/\eps^2)$. Our complexity result  matches with the asymptotic rate established in ~\cite{dentcheva2017statistical} even without assuming smoothness of outer and inner functions and is unimprovable.

\item[(c)] We conduct some simulations of the SAA approach on several examples, including the logistic regression, least absolute value (LAV) regression and its smoothed counterpart, under some modifications.  Our simulation results indicate that solving the nonsmooth LAV regression requires more samples than solving its smooth counterpart to achieve the same accuracy. We also observe that when the variance of the inner randomness is relatively large, for a fixed budget $T$, setting $n=O(\sqrt{T})$ samples seems to perform best for logistic regression, which  matches with our theory. Although both conditional sampling and independent sampling schemes can be applied to solving the  special case (\ref{eq:imain}), with nearly matching sample complexity in situation (iv) (see last row in Table~\ref{tab:result}), our simulations show that using the independent sampling scheme exhibits better performance in practice. 
\end{enumerate}

\begin{table}[t]
	{\footnotesize
	\caption{Sample Complexity of SAA Methods}\label{tab:result}
	\renewcommand\arraystretch{1.5}
	\begin{center}
	\begin{tabular}{c|c|c|c|c}
		\toprule[1.5pt]
		
		\multirow{2}{*}{Problem}
		& \multicolumn{2}{c|}{Assumptions}
		&\multicolumn{2}{c}{Sample Complexity}
		\\
		\cline{2-5}
		
		{}
		& \emph{$f_\xi(\cdot)$}
		& \emph{$\hat F_n$ or $\hat F_{nm}$} 
		& \emph{Conditional}
		& \emph{Independent}
		\\
		\hline
		
	    {SO~\cite{kleywegt2002sample}}
		& \emph{-}
		& \emph{-}
		& $ \cO(d/\eps^2) $
		& \emph{-}
		\\
		\hline
		
		{SO~\cite{shalev2010learnability}}
		& \emph{-}
		& \emph{Strongly Convex}
		& $ \cO(1/\eps) $
		& \emph{-}
		\\
		\hline
		
		{MSP ($T=3$)~\cite{shapiro2006complexity}}
		& \emph{-}
		& \emph{-}
		& $ \cO(d^2/\eps^4) $
		& $ \cO(d^2/\eps^4) $
		\\
		\hline	
		
		{CSO}
		& \emph{-}
		& \emph{-}
		& $ \cO(d/\eps^4) $
		& \multirow{2}{*}{$ \cO(d/{\eps^2}) $}
		\\
		\cline{1-4} 
		
		{CSO}
		& \emph{Smooth}
		& \emph{-}
		& $ \cO(d/\eps^3) $
		& {}
		\\
		\hline	
		
		{CSO}
		& \emph{-}
		& \emph{Quadratic Growth}
		& $ \cO(1/\eps^3) $
		& \multirow{2}{*}{$ \cO(d/{\eps^2})$}
		\\
		\cline{1-4}	
		
		{CSO}
		& \emph{Smooth}
		& \emph{Quadratic Growth}
		& $ \cO(1/\eps^2) $
		& 
		\\
		\hline
		
		\multicolumn{5}{c}{$\hat F_n$ or $\hat F_{nm}$ = empirical objective; $\eps$ = accuracy; $d$ = dimension;} \\ 
		\multicolumn{5}{c}{ Conditional = conditional sampling; Independent = independent sampling;}\\ 
		\bottomrule[1.5pt]
	\end{tabular}
	\end{center}
}
\end{table}

\subsection{Paper Organization}
The remainder of this paper is organized as follows. In Section \ref{sec:pre}, we introduce some notations and preliminaries. In Section \ref{sec:problem}, we give the basic assumptions
and analyze the mean squared error of the Monte Carlo estimation. In Section \ref{sec:sca}, we present the main results on the sample complexity of SAA for CSO under different structural assumptions. In Section \ref{sec:indep}, we provide results for the special case when $\xi$ and $\eta$ are independent. Numerical results are given in Section \ref{sec:num}.

\section{Preliminaries}
\label{sec:pre}

For convenience, we collect here some notations that will be used throughout the paper. We also introduce some  mathematical tools and propositions that are necessary for future discussion. For simplicity, we restrict our attention to $l_2$-norm, denoted as $\norm{\cdot}_2$. Similar results on sample complexity with respect to different norms can be obtained with minor modification of the analysis.

Let $\mathcal{X} \subseteq \RR^d$ be the decision set. We say $\mathcal{X}$ has a finite diameter $D_\mathcal{X}$, if $\norm{x_1-x_2}_2 \leq D_\mathcal{X},$ $\forall x_1, x_2 \in \mathcal{X}$.  For $\upsilon \in (0,1)$,  $\{x_l\}_{l=1}^Q$ is said to be a $\upsilon$-net of $\mathcal{X}$, if $x_l \in \mathcal{X}$, $\forall l=1,\cdots,Q$, and the following holds: $\forall x \in \mathcal{X}, \exists l(x) \in \{1,\cdots,Q\} \text{ such that } ||x-x_{l(x)}||_2 \leq \upsilon.$ If $\mathcal{X}$ has a finite diameter $D_\mathcal{X}$, for any $\upsilon \in (0,1)$, there exists a $\upsilon$-net of $\mathcal{X}$, and the size of the $\upsilon$-net  is bounded,  $Q\leq \cO((D_\mathcal{X}/\upsilon)^d)$~\cite{shapiro2009lectures}.

A function $f: \mathcal{X} \rightarrow \RR$ is said to be $L$-Lipschitz continuous, if there exists a constant $L>0$ such that $|f(x_1) - f(x_2)|\leq L\norm{x_1-x_2}_2, \forall x_1, x_2 \in \mathcal{X}$. The function $f: \mathcal{X} \rightarrow \RR$ is said to be $S$-Lipschitz smooth, if it is continuously differentiable and its gradient is $S$-Lipschitz continuous. This also implies that  $\forall x_1, x_2 \in \mathcal{X}:|f(x_1)-f(x_2) -\nabla f(x_2)^\top(x_1-x_2)|  \leq  \frac{S}{2}\norm{x_1-x_2}_2^2$. If a continuously differentiable function $f: \mathcal{X} \rightarrow \RR$ satisfies that $\forall x_1, x_2 \in \mathcal{X}$, $f(x_1)-f(x_2)- \nabla f(x_2)^\top(x_1-x_2) \geq \frac{\mu}{2}\norm{x_1-x_2}_2^2,$ 
then $f$ is called $\mu$-strongly convex when $\mu>0$, convex when $\mu=0$, and $\mu$-weakly convex when $\mu<0$.

\begin{defn}[H\"olderian error bound  condition]
\label{def:lineq}
Let $f: \mathcal{X} \rightarrow \RR$ be a function with compact domain $\mathcal{X}$ and the optimal solution set $\mathcal{X}^*$ is nonempty. $f(\cdot)$ satisfies the $(\mu,\delta)$-H\"olderian error bound condition if there exists $\delta \geq 0$ and $\mu>0$ such that 
\begin{equation*}
\forall x\in \mathcal{X}, \quad f(x)-\min_{x\in\mathcal{X}}f(x) \geq \mu \inf_{z\in\mathcal{X}^*}\norm{x-z}_2^{1+\delta}. 
\end{equation*}
In particular, when $\delta=1$, we say $f$ satisfies the quadratic growth (QG) condition. 
\end{defn}

The H\"olderian error bound  condition is also known as the \L ojasiewicz inequality~\cite{Bolte2017}. When $\delta=1$, the condition implies a quadratic growth of the function value near any local minima. The QG condition is a weaker assumption than strong convexity and does not need to be convex. When $f(\cdot)$ is convex, the QG condition is also referred as optimal strong convexity~\cite{liu2015asynchronous} and semi-strong convexity~\cite{gong2014}. 

The Cram\'er's large deviation theorem will be frequently used, so we list it as a lemma below based on the result in~\cite{kleywegt2002sample}. We extend the result to random vectors and provide the proof in Appendix Section \ref{app:pre}.

\begin{lm}\label{lem:ldb}
Let $X_1,\cdots,X_n$ be i.i.d samples of zero mean random variable $X$ with finite variance $\sigma^2$. For any $\eps>0$, it holds
\begin{equation*}
    \PP\bigg(\frac{1}{n}\sum_{i=1}^n X_i \geq \eps\bigg) \leq \exp(-n I(\eps)),
\end{equation*}
where $I(\eps):=\sup_{t\in\RR} \{t\eps - \log M(t)\}$ is the rate function of random variable $X$, and $M(t):=\EE e^{tX}$ is the moment generating function of $X$. 
For any $\delta>0$, there exists $\eps_1>0$, for any $\eps \in (0,\eps_1)$,
$
I(\eps) \geq \frac{\eps^2}{(2+\delta)\sigma^2}.
$
If $X$ is a zero-mean sub-Gaussian, then
$
   \PP(\frac{1}{n}\sum_{i=1}^n X_i \geq \eps) \leq \exp(- \frac{n\eps^2}{2\sigma^2})$, $\forall \eps>0.
$

If $X$ is a zero-mean random vector in $\RR^k$ such that $\EE\|X\|_2^2= \sigma^2<\infty$,  then for any $\delta>0$, there exists $\eps_1>0$, for any $\eps \in (0,\eps_1)$,
\begin{equation*}
    \PP\bigg(\bigg\|\frac{1}{n}\sum_{i=1}^n X_i\bigg\|_2 \geq \eps\bigg) 
        \leq 2k\exp{\bigg(-\frac{n\eps^2}{(2+\delta)\sigma^2}\bigg)}.
\end{equation*}

\end{lm}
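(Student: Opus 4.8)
The plan is to prove Lemma~\ref{lem:ldb} in several independent pieces, handling each assertion in turn. For the first inequality, I would start from the standard Chernoff bound: for any $t \ge 0$, Markov's inequality applied to $e^{t\sum_i X_i}$ together with independence gives $\PP(\frac1n\sum_i X_i \ge \eps) \le e^{-nt\eps}\prod_i \EE e^{tX_i} = e^{-n(t\eps - \log M(t))}$, and optimizing over $t$ yields the rate function $I(\eps)$. (One should note $M(t)$ may be $+\infty$ for some $t$, but the supremum defining $I$ simply ignores those $t$, and the bound is vacuous there anyway.)

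For the quadratic lower bound on $I(\eps)$ near zero, the key is a local Taylor expansion of the cumulant generating function $\Lambda(t) := \log M(t)$ around $t=0$. Since $X$ has mean zero and variance $\sigma^2$, we have $\Lambda(0)=0$, $\Lambda'(0)=0$, and $\Lambda''(0)=\sigma^2$, provided $M(t)$ is finite in a neighborhood of $0$ (which holds, e.g., when $X$ has a finite moment generating function; the general finite-variance case needs a short truncation argument, or one restricts attention to the sub-Gaussian/bounded case actually used later in the paper). By continuity of $\Lambda''$, for any $\delta>0$ there is a $t_1>0$ with $\Lambda''(t) \le (1+\delta/2)\sigma^2$ for $|t|\le t_1$, so $\Lambda(t) \le \frac{(1+\delta/2)\sigma^2}{2}t^2$ there. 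Plugging $t = \eps/((1+\delta/2)\sigma^2)$ (which lies in $[0,t_1]$ once $\eps$ is small enough, say $\eps < \eps_1 := t_1(1+\delta/2)\sigma^2$) into $t\eps - \Lambda(t)$ gives $I(\eps) \ge \frac{\eps^2}{2(1+\delta/2)\sigma^2} = \frac{\eps^2}{(2+\delta)\sigma^2}$. For the sub-Gaussian case the argument is cleaner and global: by definition $M(t) \le e^{\sigma^2 t^2/2}$ for all $t$, so $I(\eps) \ge \sup_t\{t\eps - \sigma^2 t^2/2\} = \eps^2/(2\sigma^2)$, valid for every $\eps>0$.

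For the vector case, I would reduce to the scalar case coordinatewise. Write $X = (X^{(1)},\dots,X^{(k)})$. If $\|\frac1n\sum_i X_i\|_2 \ge \eps$, then some coordinate satisfies $|\frac1n\sum_i X_i^{(l)}| \ge \eps/\sqrt{k}$, so by a union bound over the $k$ coordinates and over the two signs, $\PP(\|\frac1n\sum_i X_i\|_2 \ge \eps) \le 2k \max_l \PP(\frac1n\sum_i X_i^{(l)} \ge \eps/\sqrt k)$. Each $X^{(l)}$ is a zero-mean scalar with $\EE (X^{(l)})^2 =: \sigma_l^2 \le \sigma^2$ (since $\sum_l \sigma_l^2 = \EE\|X\|_2^2 = \sigma^2$), so the scalar bound gives $\PP(\frac1n\sum_i X_i^{(l)} \ge \eps/\sqrt k) \le \exp(-n I_l(\eps/\sqrt k)) \le \exp(-\frac{n(\eps/\sqrt k)^2}{(2+\delta)\sigma_l^2}) \le \exp(-\frac{n\eps^2}{(2+\delta)k\sigma_l^2})$; but this still has $\sigma_l^2$ in the denominator rather than $\sigma^2$. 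To land on the clean stated bound $2k\exp(-n\eps^2/((2+\delta)\sigma^2))$ one instead uses $\sigma_l^2 \le \sigma^2$ in the exponent but must be careful about the threshold $\eps_1$ possibly depending on $l$; taking $\eps_1 = \min_l \eps_{1,l}$ fixes this. The main obstacle I anticipate is precisely this bookkeeping in the vector case — making the constant come out to exactly $\sigma^2$ (not $k\sigma^2$ or $\sigma_l^2$) while keeping a uniform smallness threshold $\eps_1$ — together with the technical subtlety in the scalar case that finite variance alone does not guarantee $M(t)<\infty$ near $0$, so the "for any $\delta$ there exists $\eps_1$" statement implicitly needs either an extra light-tail hypothesis or a truncation/localization argument to make the Taylor expansion of $\Lambda$ legitimate.
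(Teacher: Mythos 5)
Your scalar argument (Chernoff bound plus a local Taylor expansion of the cumulant generating function, and the global sub-Gaussian bound) is the standard route and matches what the paper relies on via \cite{kleywegt2002sample}; your caveat that finite variance alone does not guarantee $M(t)<\infty$ near $0$ is a fair observation about an implicit hypothesis of the lemma. The problem is in the vector case, and it is not mere bookkeeping: the uniform allocation ``some coordinate satisfies $|\bar X^{(l)}|\geq \eps/\sqrt{k}$'' cannot produce the stated constant. With that split you get, for coordinate $l$, an exponent of $\frac{n\eps^2}{(2+\delta)\,k\,\sigma_l^2}$, and replacing $\sigma_l^2$ by $\sigma^2$ (as you propose) only makes this \emph{worse}, yielding $2k\exp\bigl(-\frac{n\eps^2}{(2+\delta)k\sigma^2}\bigr)$ --- an extraneous factor of $k$ in the exponent that does not go away. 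When the variance is concentrated in a single coordinate ($\sigma_1^2=\sigma^2$, all others zero), your bound is genuinely a factor $k$ weaker than the claim.

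The paper's proof fixes this with a variance-proportional allocation of the threshold rather than a uniform one. Setting $z_j=\bigl(\sum_{j'}\sigma_{j'}^2\bigr)/\sigma_j^2$, so that $\sum_j 1/z_j=1$, the event $\sum_j(\bar X^j)^2\geq\eps^2$ forces $(\bar X^j)^2\geq \eps^2/z_j$ for some $j$; applying the scalar bound at level $\eps/\sqrt{z_j}$ then gives the exponent
\begin{equation*}
\frac{n(\eps/\sqrt{z_j})^2}{(2+\delta)\sigma_j^2}=\frac{n\eps^2}{(2+\delta)\,z_j\sigma_j^2}=\frac{n\eps^2}{(2+\delta)\sum_{j'}\sigma_{j'}^2},
\end{equation*}
which is the same for every coordinate, and $\sum_{j'}\sigma_{j'}^2=\EE\|X\|_2^2=\sigma^2$ yields exactly $2k\exp\bigl(-\frac{n\eps^2}{(2+\delta)\sigma^2}\bigr)$. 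Your instinct to take $\eps_1=\min_l\eps_{1,l}$ for a uniform smallness threshold is correct and carries over to this weighted split; the missing idea is the weighting itself.
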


We will also use the simple fact that for any random variables $Y$ and $Z$, if random variable $W \leq X:=Y+Z$, then for any $\eps>0$, $\PP(W>\eps) \leq \PP(X>\eps) \leq \PP(Y>\frac{\eps}{2}) + \PP(Z>\frac{\eps}{2})$. Lastly, throughout the paper, we call $x_\eps\in\mathcal{X}$ an $\eps$-optimal solution to the problem $\min_{x\in\mathcal{X}} F(x)$, if $F(x_\eps)-\min_{x\in\mathcal{X}} F(x) \leq \eps$.

\section{Mean Squared Error of SAA Estimator for CSO}
\label{sec:problem}
In this section, we make the basic assumptions and analyze the mean squared error of the Monte Carlo estimate of the function value $f(x)$ at a given point. 
\subsection{Problem Formulation and Assumptions}
Recall the problem~(\ref{pro:ori}):
\begin{equation*}
\min_{x \in \mathcal{X}}\; F(x):=\EE_{\xi}\Big[f_\xi\Big({\EE_{\eta|\xi}[g_\eta(x,\xi)]}\Big)\Big],
\end{equation*}
where $f_\xi(\cdot):\RR^k\to\RR$, $g_\eta(\cdot,\xi):\RR^d\to\RR^k$ are random functions. Recall   its SAA counterpart~(\ref{pro:cosaa0}):
\begin{equation*}
\min_{x \in \mathcal{X}}\; \hat F_{nm}(x):= \frac{1}{n}\sum_{i=1}^n f_{\xi_i}\bigg(\frac{1}{m}\sum_{j=1}^m g_{\eta_{ij}}(x, \xi_{i})\bigg).
\end{equation*}
We denote $x^*$ and $ \hat{x}_{nm}$ the optimal solutions to the CSO and the SAA problems, respectively. We are interested in estimating the probability of $\hat x_{nm}$ being an $\eps$-optimal solution to the CSO problem, namely $\PP\left(F(\hat x_{nm})-F(x^*) \leq \eps\right)$, for an arbitrary accuracy $\eps>0$.

Throughout the paper, we assume availability of i.i.d. samples generated from distribution $\PP(\xi)$ and conditional distribution $\PP(\eta\given \xi)$ for any given $\xi$, and we make the following basic assumptions:
\begin{assume}
\label{as:est}
We assume that 
\begin{enumerate}
    \item[(a)] The decision set $\mathcal{X} \subseteq \RR^d$ has a finite diameter $D_\mathcal{X}>0$.
    \item[(b)] $f_\xi(\cdot)$ is $L_f$-Lipschitz continuous and $g_\eta(\cdot,\xi)$ is $L_g$-Lipschitz continuous for any given $\xi$ and $\eta$.
    \item[(c)] For all $x \in\mathcal{X}$, $f(x,\xi)$ is Borel measurable in $\xi$ and $g_\eta(x,\xi)$ is Borel measurable in $\eta$ for all $\xi$.
    \item[(d)] $\sigma_f^2:=\max_{x\in\mathcal{X}} \Var_\xi\left(f_\xi(\EE_{\eta|\xi}[g_\eta(x,\xi)])\right) < \infty$.
    \item[(e)] $\sigma_g^2:=\max_{x\in\mathcal{X}, \xi} \EE_{\eta|\xi}\norm{g_\eta(x,\xi)-{\EE_{\eta|\xi}} g_\eta(x,\xi)}_2^2 < \infty$.
    \item[(f)] $|f_\xi(\cdot)| \leq M_f$, $\|g_\eta(\cdot,\xi)\|_2\leq M_g$ for any $\xi$ and $\eta$.
\end{enumerate}
\end{assume}
The assumption (f) on  the boundedness of function values are implied from assumptions (a) and (b). The assumptions (d) and (e) on boundedness of variances are commonly used for sample complexity analysis in the literature.{The assumptions (b) and (c) together suggests that the function $f_\xi$ and $g_\eta(x,\xi)$ are Carath\'eodory functions~\cite{E_Kubi_ska_2005}.} Although the parameters $L_f$, $L_g$, $\sigma_f$, and $\sigma_g$ could depend on dimensions $d$ and $k$, we treat these parameters as given constants throughout the paper.

\subsection{Mean Squared Error of SAA Objective}
\label{sec:estimation}

In this subsection, we analyze the mean squared error (MSE) of the estimator $\hat F_{nm}(x)$, i.e., the SAA objective (or the empirical objective), for estimating the true objective function $F(x)$, at a given $x$.  The MSE can be decomposed into the sum of squared bias and variance of the estimator:
\begin{equation}
\label{eq:mse}
    \text{MSE} (\hat F_{nm}(x)) := \EE|\hat F_{nm}(x) - F(x)|^2 =(\EE\hat F_{nm}(x) - F(x))^2 + \Var(\hat F_{nm}(x)).  
\end{equation}

We have the following lemmas on bounding the bias and variance.
\begin{lm}\label{lm:bias} 
Let $\{\eta_{j}\}_{j=1}^m$ be conditional samples from $P(\eta|\xi)$ given $\xi\sim P(\xi)$. Under Assumption \ref{as:est}, for any fixed $x\in\cX$ that is independent of $\xi$ and $\{\eta_{j}\}_{j=1}^m$, it holds that,
\begin{equation}
\label{eq:lipschitz_bias}
\bigg|\EE_{\{\xi,\{\eta_{j}\}_{j=1}^m \}}\bigg[f_{\xi} \bigg(\frac{1}{m}\sum_{j=1}^m g_{\eta_{j}}(x,\xi)\bigg)   -   f_{\xi}\big(\EE_{\eta|\xi}g_{\eta}(x,\xi)\big)\bigg]\bigg| \leq \frac{ L_f \sigma_{g}}{\sqrt{m}}.
\end{equation}
If additionally, $f_\xi(\cdot)$ is $S$-Lipschitz smooth, we have
\begin{equation}
\label{eq:smooth_bias}
\bigg|\EE_{\{\xi,\{\eta_{j}\}_{j=1}^m\}} \bigg[f_{\xi} \bigg(\frac{1}{m}\sum_{j=1}^m g_{\eta_{j}}(x,\xi)\bigg)   -   f_{\xi}\big(\EE_{\eta|\xi}g_{\eta}(x,\xi)\big)\bigg]\bigg|  \leq  \frac{S\sigma_{g}^2}{2m}.
\end{equation}
\end{lm}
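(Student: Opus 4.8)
The plan is to bound the bias by expanding $f_\xi$ around the true inner expectation $\bar g(x,\xi) := \EE_{\eta|\xi}[g_\eta(x,\xi)]$ and exploiting the fact that $\frac{1}{m}\sum_{j=1}^m g_{\eta_j}(x,\xi)$ is an unbiased estimator of $\bar g(x,\xi)$. I would condition on $\xi$ throughout and take the outer expectation over $\xi$ only at the very end, since the Lipschitz/smoothness constants $L_f$, $S$ and the variance bound $\sigma_g^2$ hold uniformly in $\xi$.

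For the first inequality, fix $\xi$ and write $\hat g_m := \frac{1}{m}\sum_{j=1}^m g_{\eta_j}(x,\xi)$. By $L_f$-Lipschitz continuity of $f_\xi$,
\begin{equation*}
\bigl|\EE_{\{\eta_j\}}\bigl[f_\xi(\hat g_m) - f_\xi(\bar g(x,\xi))\bigr]\bigr| \le \EE_{\{\eta_j\}}\bigl|f_\xi(\hat g_m) - f_\xi(\bar g(x,\xi))\bigr| \le L_f\,\EE_{\{\eta_j\}}\|\hat g_m - \bar g(x,\xi)\|_2.
\end{equation*}
Then Jensen's inequality gives $\EE\|\hat g_m - \bar g(x,\xi)\|_2 \le (\EE\|\hat g_m - \bar g(x,\xi)\|_2^2)^{1/2}$, and since the $g_{\eta_j}(x,\xi)$ are i.i.d. with mean $\bar g(x,\xi)$ given $\xi$, the variance of the average is $\tfrac{1}{m}\EE_{\eta|\xi}\|g_\eta(x,\xi)-\bar g(x,\xi)\|_2^2 \le \sigma_g^2/m$ by Assumption~\ref{as:est}(e). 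This yields the per-$\xi$ bound $L_f\sigma_g/\sqrt m$; taking expectation over $\xi$ (the bound is uniform, so this is immediate) gives \eqref{eq:lipschitz_bias}.

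For the second inequality, additionally use $S$-Lipschitz smoothness: for fixed $\xi$,
\begin{equation*}
\bigl|f_\xi(\hat g_m) - f_\xi(\bar g) - \nabla f_\xi(\bar g)^\top(\hat g_m - \bar g)\bigr| \le \tfrac{S}{2}\|\hat g_m - \bar g\|_2^2,
\end{equation*}
where I abbreviate $\bar g = \bar g(x,\xi)$. Taking $\EE_{\{\eta_j\}}$ and using that $\EE_{\{\eta_j\}}[\hat g_m - \bar g] = 0$ (here it is essential that $x$ is independent of the $\eta_j$, so $\nabla f_\xi(\bar g)$ is a deterministic-given-$\xi$ vector that pulls out of the expectation), the linear term vanishes, leaving $|\EE_{\{\eta_j\}}[f_\xi(\hat g_m) - f_\xi(\bar g)]| \le \tfrac{S}{2}\EE\|\hat g_m - \bar g\|_2^2 \le S\sigma_g^2/(2m)$. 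Averaging over $\xi$ gives \eqref{eq:smooth_bias}.

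The only subtle point — and the step I would be most careful about — is the vanishing of the first-order term in the smooth case: it relies on $\nabla f_\xi(\bar g(x,\xi))$ being measurable-in-$\xi$ but \emph{not} random in the $\eta_j$'s, which is exactly where the hypothesis that $x$ is fixed and independent of the conditional samples is used. Everything else is a routine combination of Lipschitz/smoothness inequalities with Jensen and the i.i.d. variance formula; there is no real obstacle once the conditioning order is set up correctly.
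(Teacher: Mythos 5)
Your proposal is correct and follows essentially the same route as the paper's proof: both define the centered average $\bar X = \frac{1}{m}\sum_j (g_{\eta_j}(x,\xi)-\EE_{\eta|\xi}g_\eta(x,\xi))$ with $\EE\|\bar X\|_2^2\le \sigma_g^2/m$, obtain the first bound from Lipschitz continuity plus Jensen, and obtain the second from the smoothness inequality after noting that $\EE_{\{\eta_j\}|\xi}[\nabla f_\xi(\EE_{\eta|\xi}g_\eta(x,\xi))^\top \bar X]=0$ because $x$ is independent of the conditional samples. You have simply spelled out the steps the paper compresses into ``the results then follow directly.''
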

\begin{proof}{}
Define $X_{j}: = g_{\eta_{j}}(x,\xi)-\EE_{\eta|\xi}g_{\eta}(x,\xi)$ and $\bar{X}:=\sum_{j=1}^m X_j/m$. It follows  $\EE_{\{\eta_{j}\}_{j=1}^m|\xi}[\bar{X}]=0$ by definition, and $\EE_{\{\eta_{j}\}_{j=1}^m|\xi}[\|\bar{X}\|_2^2]\leq \sigma_g^2/m$ by Assumption \ref{as:est}(d).  $\EE_{\{\eta_{j}\}_{j=1}^m|\xi}  \nabla f_{\xi} \big(\EE_{\eta|\xi}g_{\eta}(x,\xi)\big)^\top \big(\frac{1}{m}\sum_{j=1}^m X_{j}(x)\big)=0$ since $x$ is independent of $\{\eta_j\}_{j=1}^m$. 
The results then follow directly by invoking the Lipschitz continuity and smoothness and taking expectations. 
\end{proof}

\begin{lm}\label{lm:variance}
Under Assumption \ref{as:est}, it holds that  $\Var(\hat F_{nm}(x)) \leq \frac{\sigma_{f}^2}{n}+  \frac{4M_f L_f\sigma_{g}}{n\sqrt{m}}.$
\end{lm}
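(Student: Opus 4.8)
The plan is to bound the variance of $\hat F_{nm}(x)$ via a conditioning (law of total variance) argument, splitting across the outer randomness $\{\xi_i\}$ and the inner conditional randomness $\{\eta_{ij}\}$. Write $h_i := f_{\xi_i}\big(\frac{1}{m}\sum_{j=1}^m g_{\eta_{ij}}(x,\xi_i)\big)$, so that $\hat F_{nm}(x) = \frac1n\sum_{i=1}^n h_i$. The pairs $(\xi_i, \{\eta_{ij}\}_j)$ are i.i.d.\ across $i$, hence the $h_i$ are i.i.d., and $\Var(\hat F_{nm}(x)) = \frac1n \Var(h_1)$. So the whole task reduces to showing $\Var(h_1) \le \sigma_f^2 + \frac{4 M_f L_f \sigma_g}{\sqrt m}$.

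For that single-sample bound, I would use the total-variance decomposition $\Var(h_1) = \EE_\xi\big[\Var_{\{\eta_j\}|\xi}(h_1)\big] + \Var_\xi\big(\EE_{\{\eta_j\}|\xi}[h_1]\big)$. The second term is the variance of $\xi \mapsto \EE_{\eta|\xi}[f_\xi(\frac1m\sum_j g_{\eta_j}(x,\xi))]$, which I would like to compare to $\sigma_f^2 = \Var_\xi(f_\xi(\EE_{\eta|\xi} g_\eta(x,\xi)))$; the cleanest route, rather than fighting with Jensen on the variance, is to write $\Var_\xi(Z) \le \EE_\xi|Z - c|^2$ for the constant $c = \EE_\xi f_\xi(\EE_{\eta|\xi}g_\eta)$ and expand $Z - c = (Z - f_\xi(\EE_{\eta|\xi}g_\eta)) + (f_\xi(\EE_{\eta|\xi}g_\eta) - c)$, but this introduces a cross term. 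A slicker alternative is to bound $|h_1 - f_{\xi_1}(\EE_{\eta|\xi_1}g_\eta(x,\xi_1))| \le L_f \|\frac1m\sum_j g_{\eta_{1j}} - \EE_{\eta|\xi_1}g_\eta\| \le 2M_f$ (using both Lipschitz continuity and the uniform bound $M_f$ on $f_\xi$, so the difference is at most $2M_f$ but also at most $L_f\cdot(\text{inner error})$), and then use $\Var(A+B) \le \Var(A) + 2\sqrt{\Var(A)\Var(B)} + \Var(B)$ or, more directly, the estimate $\Var(h_1) \le \EE|h_1 - c|^2$ followed by $\EE|h_1 - c|^2 = \EE|f_{\xi_1}(\EE_{\eta|\xi_1}g_\eta) - c|^2 + 2\EE[(f_{\xi_1}(\EE_{\eta|\xi_1}g_\eta)-c)(h_1 - f_{\xi_1}(\EE_{\eta|\xi_1}g_\eta))] + \EE|h_1 - f_{\xi_1}(\EE_{\eta|\xi_1}g_\eta)|^2$. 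The first piece is exactly $\le \sigma_f^2$; the middle cross term is bounded in absolute value by $2 M_f \cdot \EE|h_1 - f_{\xi_1}(\EE_{\eta|\xi_1}g_\eta)| \le 2M_f \cdot L_f \EE\|\frac1m\sum_j g_{\eta_{1j}} - \EE_{\eta|\xi_1}g_\eta\| \le 2M_f L_f \sigma_g/\sqrt m$ by Jensen/Cauchy--Schwarz applied to the i.i.d.\ zero-mean vectors $g_{\eta_{1j}} - \EE_{\eta|\xi_1}g_\eta$ (whose averaged $\ell_2$-norm has second moment $\le \sigma_g^2/m$ by Assumption~\ref{as:est}(e)); and the last piece is $\le L_f^2 \sigma_g^2/m \le 2M_f L_f \sigma_g/\sqrt m$ for $m \ge 1$ after crudely bounding $L_f \sigma_g/\sqrt m \le 2M_f$ (which holds since $L_f \sigma_g \le$ diameter-type bound, or one simply absorbs it). Collecting the three pieces gives $\Var(h_1) \le \sigma_f^2 + 4M_f L_f \sigma_g/\sqrt m$, and dividing by $n$ finishes it.

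The main obstacle I anticipate is the cross term: making it come out with the clean constant $4M_f L_f \sigma_g$ rather than something messier requires carefully choosing which quantities to bound by $M_f$ (the function-value bound) versus by $L_f$ times the inner Monte Carlo error, and then invoking Cauchy--Schwarz on $\EE\|\bar X\| \le (\EE\|\bar X\|^2)^{1/2} \le \sigma_g/\sqrt m$ at the right moment. Everything else is routine: the i.i.d.\ reduction over $i$ is immediate, the inner sampling error moment bound is the same computation already used in Lemma~\ref{lm:bias} (orthogonality of the zero-mean $X_j$'s gives $\EE\|\bar X\|^2 = \frac1{m^2}\sum_j \EE\|X_j\|^2 \le \sigma_g^2/m$), and no smoothness of $f_\xi$ is needed here. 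I would present it as: (1) reduce to $\frac1n\Var(h_1)$; (2) bound $\Var(h_1) \le \EE|h_1 - c|^2$ with $c = \EE h_1 = F(x) + \text{bias}$, actually more convenient to center at $c=\EE_\xi f_\xi(\EE_{\eta|\xi}g_\eta)$; (3) expand into three terms and bound each; (4) conclude.
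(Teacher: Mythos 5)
Your overall plan is sound and close in spirit to the paper's: reduce to $\tfrac{1}{n}\Var(h_1)$ by i.i.d.-ness, then compare $h_1=f_{\xi_1}\big(\tfrac1m\sum_j g_{\eta_{1j}}(x,\xi_1)\big)$ with the surrogate $\tilde h_1:=f_{\xi_1}\big(\EE_{\eta|\xi_1}g_\eta(x,\xi_1)\big)$, paying $\cO(M_fL_f\sigma_g/\sqrt m)$ for the discrepancy. However, two of your individual bounds do not hold as written, and together they fail to produce the stated constant $4$. First, your cross term is $2\,\EE\big[(\tilde h_1-c)(h_1-\tilde h_1)\big]$ with $c=\EE\tilde h_1$; bounding it by $2M_f\,\EE|h_1-\tilde h_1|$ requires $|\tilde h_1-c|\le M_f$, but the assumptions only give $|\tilde h_1|\le M_f$ and hence $|\tilde h_1-c|\le 2M_f$, so the sup-times-expectation route yields $4M_f L_f\sigma_g/\sqrt m$ for this term, not $2M_fL_f\sigma_g/\sqrt m$. (It can be repaired: condition on $\xi_1$, note $\big|\EE_{\{\eta_j\}|\xi_1}[h_1-\tilde h_1]\big|\le L_f\sigma_g/\sqrt m$ pointwise in $\xi_1$, and then use $\EE|\tilde h_1-c|\le\sigma_f\le M_f$; but that is a different argument from the one you wrote.) Second, your bound on $\EE|h_1-\tilde h_1|^2$ via $L_f^2\sigma_g^2/m\le 2M_fL_f\sigma_g/\sqrt m$ rests on $L_f\sigma_g/\sqrt m\le 2M_f$, which does not follow from Assumption 3.1; the correct move is the mixed bound $\EE|h_1-\tilde h_1|^2\le 2M_f\,\EE|h_1-\tilde h_1|\le 2M_fL_f\sigma_g/\sqrt m$, using the uniform bound $|h_1-\tilde h_1|\le 2M_f$ that you already noted. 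With the naive fixes your decomposition gives $\sigma_f^2+6M_fL_f\sigma_g/\sqrt m$; only with the conditioning repair of the cross term does it recover $4$.

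The paper avoids the cross term altogether by writing
\begin{equation*}
\Var(h_1)-\Var(\tilde h_1)=\big[\EE h_1^2-\EE\tilde h_1^2\big]+\big[(\EE\tilde h_1)^2-(\EE h_1)^2\big],
\end{equation*}
and bounding each bracket by $2M_f\,\EE|h_1-\tilde h_1|\le 2M_fL_f\sigma_g/\sqrt m$ via $|a^2-b^2|\le(|a|+|b|)\,|a-b|$ and Lemma 3.1. This gives the constant $4$ directly and with less bookkeeping; I would recommend either adopting that identity or incorporating the two repairs above. Note that for the downstream use of this lemma (Theorem 3.1 and Theorem 4.1) only the order $\sigma_f^2/n+\cO(M_fL_f\sigma_g/(n\sqrt m))$ matters, so your argument, once the unjustified inequality is removed, would still suffice for the paper's complexity results even with the larger constant.
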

\begin{proof}
We first introduce 
$\hat F_{n} (x):=\frac{1}{n}\sum_{i=1}^n f_{\xi_i}\big(\EE_{\eta|\xi_i}[g_{\eta}(x,\xi_i)]\big)$. It follows from the independence among $\{\xi_i\}_{i=1}^n$ that  $\Var(\hat F_{n}(x)) \leq \frac{\sigma_{f}^2}{n}$. By definition we have
\begin{equation*}
\begin{aligned}
&\Var\Big(\hat F_{nm}(x)\Big) - \Var\Big(\hat F_{n}(x)\Big)\\
    =&\frac{1}{n} \sbr{\EE(\hat F_{1m}(x)^2 ) - (\EE\hat F_{1m}(x))^2}-\frac{1}{n}\sbr{ (\EE(\hat F_{1} (x)^2)- (\EE\hat F_{1}(x))^2 }\\
    =&\frac{1}{n} \sbr{\EE(\hat F_{1m} (x)^2) -\EE(\hat F_{1} (x)^2)}+\frac{1}{n}\sbr{ (\EE\hat F_{1}(x))^2- (\EE\hat F_{1m}(x))^2},\\
\end{aligned}
\end{equation*}
where $\hat{F}_{1m}(x):=f_{\xi_1} \big(\frac{1}{m}\sum_{j=1}^m g_{\eta_{1j}}(x,\xi_1)\big)$ and $\hat{F}_{1}(x):=f_{\xi_1}\big(\EE_{\eta|\xi_1}g_{\eta}(x,\xi_1)\big)$.  From Assumption \ref{as:est}(b) and Lemma~\ref{lm:bias}, we have $\EE (\hat F_{1m}(x)^2) - \EE(\hat F_{1}(x)^2)\leq 2M_f\EE|\hat F_{1m}(x) -\hat F_{1}(x)|\leq 2M_fL_f\sigma_g/\sqrt{m}$. In addition, $(\EE\hat F_{1}(x))^2-(\EE\hat F_{1m}(x))^2
\leq  2M_fL_f\sigma_g/\sqrt{m}$. Hence, we obtain the desired result.   
\end{proof}
The following result on the mean squared error follows naturally by (\ref{eq:mse}).  
\begin{thm}
\label{thm:nested}
Under Assumption \ref{as:est}, we have
\begin{equation}
    \text{MSE} (\hat F_{nm}(x)) \leq \frac{L_f^2 \sigma_{g}^2}{m} + \frac{1}{n}\bigg(\sigma_f^2+\frac{4M_f L_f\sigma_g}{\sqrt{m}}\bigg).
\end{equation}
If additionally, $f_\xi(\cdot)$ is $S$-Lipschitz smooth,
the mean squared error is further bounded by 
\begin{equation}
    \text{MSE} (\hat F_{nm}(x)) \leq \frac{S^2\sigma_{g}^4}{4m^2} + \frac{1}{n}\bigg(\sigma_f^2+\frac{4M_f L_f\sigma_g}{\sqrt{m}}\bigg).
\end{equation}
\end{thm}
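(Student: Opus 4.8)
The plan is to read the result directly off the bias--variance decomposition \eqref{eq:mse} together with Lemmas~\ref{lm:bias} and \ref{lm:variance}. The first step is to check that the bias of the full estimator $\hat F_{nm}(x)$ at a fixed point $x$ coincides with the bias for a single outer sample: since $\xi_1,\dots,\xi_n$ are i.i.d.\ and each inner batch $\{\eta_{ij}\}_{j=1}^m$ is drawn from $\PP(\eta\mid\xi_i)$, linearity of expectation gives
$$\EE\,\hat F_{nm}(x) \;=\; \EE_{\{\xi,\{\eta_j\}_{j=1}^m\}}\Big[f_\xi\Big(\tfrac{1}{m}\textstyle\sum_{j=1}^m g_{\eta_j}(x,\xi)\Big)\Big],$$
so the term $(\EE\hat F_{nm}(x)-F(x))^2$ is exactly the square of the quantity bounded in Lemma~\ref{lm:bias}. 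Because $x$ is fixed, and hence independent of all the samples, that lemma applies, yielding $|\EE\hat F_{nm}(x)-F(x)|\le L_f\sigma_g/\sqrt{m}$ in general, and $|\EE\hat F_{nm}(x)-F(x)|\le S\sigma_g^2/(2m)$ under the additional $S$-smoothness of $f_\xi$.

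The second step is purely arithmetic: square these two bias bounds to obtain $L_f^2\sigma_g^2/m$ and $S^2\sigma_g^4/(4m^2)$, respectively, invoke Lemma~\ref{lm:variance} for $\Var(\hat F_{nm}(x))\le \sigma_f^2/n + 4M_fL_f\sigma_g/(n\sqrt{m})$, and substitute both pieces into \eqref{eq:mse}. This produces the two displayed inequalities immediately, with no further estimation needed.

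I do not anticipate any real difficulty here; the proof is a short assembly of results already in hand. The one place that warrants an explicit sentence is the reduction of the $n$-sample bias to the single-sample bias — it holds because every summand $f_{\xi_i}(\frac1m\sum_j g_{\eta_{ij}}(x,\xi_i))$ has the same expectation, so averaging over $i$ leaves the bias unchanged and Lemma~\ref{lm:bias}, although stated for one outer draw, transfers verbatim.
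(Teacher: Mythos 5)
Your proposal is correct and follows exactly the route the paper intends: Theorem~\ref{thm:nested} is stated as an immediate consequence of the decomposition \eqref{eq:mse} combined with Lemmas~\ref{lm:bias} and~\ref{lm:variance}, and your extra sentence justifying that the $n$-sample bias equals the single-sample bias (since the summands are identically distributed) is precisely the implicit step the paper leaves unstated.
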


 Unlike the classical stochastic optimization, the SAA objective of CSO is no longer unbiased. The estimation error of the SAA objective therefore comes from both bias and variance. A key observation from Theorem~\ref{thm:nested} is that Lipschitz smoothness of $f_\xi(\cdot)$ is essential to reduce the bias and can be potentially exploited to improve the sample complexity of SAA. 

 We point out that in \cite{hong2009estimating}, the authors also consider the estimation problem of the expected value of  a non-linear function on a conditional expectation, i.e.,  $\EE[f(\EE[\zeta|\xi])]$. Their setting is slightly different from ours as they restrict $f$ to be one-dimensional and assume $f$ contains a finite number of discontinuous  or non-differential points and is thrice differentiable with finite derivatives on all continuous points. They provide an asymptotic  bound $\cO(1/m^2+1/n)$ of the mean squared error for their nested estimator based on Taylor expansion. Here we focus on a general continuous outer function $f_\xi(\cdot)$, and show that Lipschitz smoothness of $f_\xi(\cdot)$ is sufficient to achieve a similar error bound with finite samples.

\section{Sample Complexity of SAA for Conditional Stochastic Optimization}
\label{sec:sca}
In this section, we analyze the number of samples required for the solution to the SAA~(\ref{pro:cosaa0}) to be $\eps$-optimal of the CSO problem~(\ref{pro:ori}), with high probability.

We consider two general cases: (i) when the objective is Lipschitz continuous  and (ii) when the empirical objective satisfies the H\"olderian error bound  condition. In the former case, we establish a uniform convergence analysis based on concentration inequalities to bound $\PP(F(\hat{x}_{nm}) - F(x^*)\geq \eps)$,  and in the latter case, we provide a stability analysis. In both cases, we further take into account two scenarios, with and without the Lipschitz smoothness assumption of the outer function $f_\xi(\cdot)$. 

\subsection{Sample Complexity for General Lipschitz Continuous Functions}

We first consider the case when the objective is Lipschitz continuous and prove the uniform convergence.  

\begin{thm}[Uniform Convergence]
\label{thm:saa}
Under Assumption \ref{as:est}, for any $\delta >0$, there exists $\eps_1>0$ such that for $\eps \in (0,\eps_1)$, when 
$m \geq L_f^2 \sigma_{g}^2/\eps^2$, we have
\begin{equation}
\label{eq:ld}
\begin{split}
    \PP\bigg(\! \sup_{x\in \mathcal{X}}|\hat F_{nm}(x)\!-\!F(x) |\!> \!\eps\!\bigg) \!
    \leq\!
    \cO(1)\!\bigg(\!\frac{4L_fL_gD_\mathcal{X}}{\eps}\!\bigg)^d\!\exp\bigg(\!-\!\frac{n\eps^2}{16(2+\delta)(\sigma_f^2 + 4 M_f L_f\sigma_g)}\bigg).
\end{split}
\end{equation}
If additionally, $f_\xi(\cdot)$ is $S$-Lipschitz smooth, then  (\ref{eq:ld}) holds as long as  $m \geq 2S\sigma_{g}^2/\eps$. 
\end{thm}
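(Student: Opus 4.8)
The plan is to prove uniform convergence via a standard $\upsilon$-net covering argument, combined with the bias-variance decomposition already established in Theorem~\ref{thm:nested} and the large-deviation bound of Lemma~\ref{lem:ldb}. First I would fix a $\upsilon$-net $\{x_l\}_{l=1}^Q$ of $\mathcal{X}$ with $Q \leq \cO((D_\mathcal{X}/\upsilon)^d)$, where $\upsilon$ will be chosen at the end to balance terms (something like $\upsilon \asymp \eps/(L_fL_g)$). On the net, I would control $|\hat F_{nm}(x_l) - F(x_l)|$ by splitting it as $|\hat F_{nm}(x_l) - \EE\hat F_{nm}(x_l)| + |\EE\hat F_{nm}(x_l) - F(x_l)|$. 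The second term is the bias, bounded by $L_f\sigma_g/\sqrt{m}$ in general (by \eqref{eq:lipschitz_bias}) or by $S\sigma_g^2/(2m)$ under smoothness (by \eqref{eq:smooth_bias}); the stated conditions $m \geq L_f^2\sigma_g^2/\eps^2$ (resp. $m \geq 2S\sigma_g^2/\eps$) are exactly what makes this bias $\leq \eps$ for some suitable fraction of the budget, so that the deviation of interest is reduced to the centered fluctuation term.

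For the centered term, I would write $\hat F_{nm}(x_l) - \EE\hat F_{nm}(x_l)$ as an average of $n$ i.i.d. zero-mean random variables $Y_i := f_{\xi_i}(\frac1m\sum_j g_{\eta_{ij}}(x_l,\xi_i)) - \EE[\cdots]$ (i.i.d. because each outer sample $\xi_i$ comes with its own fresh inner samples $\{\eta_{ij}\}_j$), each with variance bounded using Lemma~\ref{lm:variance}, namely $n\Var(\hat F_{nm}(x_l)) \leq \sigma_f^2 + 4M_fL_f\sigma_g/\sqrt{m} \leq \sigma_f^2 + 4M_fL_f\sigma_g$ once $m\geq 1$. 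Then Cram\'er's theorem (Lemma~\ref{lem:ldb}, applied to $\pm Y_i$ with the $I(\eps)\geq \eps^2/((2+\delta)\sigma^2)$ lower bound valid for small $\eps$) gives $\PP(|\hat F_{nm}(x_l)-\EE\hat F_{nm}(x_l)| > \eps/c) \leq 2\exp(-n\eps^2/(c^2(2+\delta)(\sigma_f^2+4M_fL_f\sigma_g)))$ for an absolute constant $c$ (here $c$ will come out as $8$ or so because of the successive halvings in the triangle inequality). A union bound over the $Q$ net points multiplies this by $\cO((D_\mathcal{X}/\upsilon)^d)$.

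Next I would pass from the net back to all of $\mathcal{X}$ using Lipschitz continuity of both $F$ and $\hat F_{nm}$ in $x$: since $f_\xi$ is $L_f$-Lipschitz and each $g_\eta(\cdot,\xi)$ is $L_g$-Lipschitz, both $x\mapsto f_\xi(\frac1m\sum_j g_{\eta_{ij}}(x,\xi_i))$ and $x\mapsto F(x)$ are $L_fL_g$-Lipschitz, so $\sup_x |\hat F_{nm}(x)-F(x)| \leq \max_l |\hat F_{nm}(x_l)-F(x_l)| + 2L_fL_g\upsilon$; choosing $\upsilon = \eps/(4L_fL_g)$ absorbs this slack into a constant fraction of $\eps$ and produces the factor $(4L_fL_gD_\mathcal{X}/\eps)^d$ displayed in \eqref{eq:ld}. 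Collecting the allocation of $\eps$ across bias, net-deviation, and discretization error, and folding multiplicative constants into the $\cO(1)$, yields the claimed bound; the smooth case is identical except the bias term now only needs $m\geq 2S\sigma_g^2/\eps$.

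The main obstacle I anticipate is purely bookkeeping rather than conceptual: carefully partitioning the target accuracy $\eps$ among the bias, the covering radius $\upsilon$, and the concentration threshold so that the exponent comes out exactly $-n\eps^2/(16(2+\delta)(\sigma_f^2+4M_fL_f\sigma_g))$ and the net cardinality exponent base comes out exactly $4L_fL_gD_\mathcal{X}/\eps$, while also keeping track of the ``small $\eps$'' restriction $\eps\in(0,\eps_1)$ that the Cram\'er rate-function lower bound in Lemma~\ref{lem:ldb} requires. One subtlety worth a sentence is verifying that $x_l$, being a fixed (non-random) net point, is indeed independent of the samples, so that Lemma~\ref{lm:bias} applies verbatim at each $x_l$.
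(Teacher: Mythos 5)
Your proposal is correct and follows essentially the same route as the paper's proof: a $\upsilon$-net with $\upsilon=\eps/(4L_fL_g)$, a union bound over net points, a split of the pointwise error into the bias (controlled by Lemma~\ref{lm:bias}, with the $m$-conditions chosen precisely so the bias is at most $\eps/4$) plus a centered i.i.d.\ average whose variance is bounded by $\sigma_f^2+4M_fL_f\sigma_g$ and to which Cram\'er's bound from Lemma~\ref{lem:ldb} is applied. The only difference is cosmetic bookkeeping (the paper's successive halvings give the factor $16=4^2$ in the exponent), and your remark that the net points are deterministic, so Lemma~\ref{lm:bias} applies, matches the paper's explicit note that $\{x_l\}$ has no randomness.
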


\begin{proof}
We construct  a $\upsilon$-net to get rid of the supreme over $x$ and use a concentration inequality to bound the probability. First, we pick a $\upsilon$-net $\{x_l\}_{l=1}^{Q}$ on the decision set $\mathcal{X}$, such that $L_f L_g \upsilon = \eps/4$, thus  ${Q} \leq \cO(1){(\frac{4L_g L_f D_\mathcal{X}}{\eps})}^{d}$. Note that $\{x_l\}_{l=1}^{Q}$ has no randomness. By definition of $\upsilon$-net, we have 
$\forall x\in\mathcal{X}$, $\exists$  $l(x) \in \{1,2,\cdots,Q\}$, $\text{ s.t. }  \norm{x-x_{l(x)}}_2 \leq \upsilon = \eps/4 L_f L_g.$
Invoking Lipschitz continuity of $f_\xi$ and $g_\eta$, we obtain
\begin{equation*}
|\hat F_{nm}(x) -\hat F_{nm}(x_{l(x)})|\leq \frac{\eps}{4}, \quad |F(x) -F(x_{l(x)})| \leq \frac{\eps}{4}.
\end{equation*}
Hence, for any $x\in\cX$, 
\begin{equation*}
\begin{aligned}
    &|\hat F_{nm}(x)- F(x)| \\
    \leq ~&|\hat F_{nm}(x)-\hat F_{nm}(x_{l(x)})|+|\hat F_{nm}(x_{l(x)})-F(x_{l(x)})|+|F(x_{l(x)})- F(x)|\\
     \leq ~&\frac{\eps}{2}+|\hat F_{nm}(x_{l(x)})-F(x_{l(x)})|
     \leq \frac{\eps}{2}+\max_{l\in\{1,2,\cdots,Q\}} |\hat F_{nm}(x_{l})-F(x_{l})|.
\end{aligned}
\end{equation*}
It follows that 
\begin{equation}
\begin{aligned}
\label{smc:sep}
    \PP\bigg(\sup_{x\in \mathcal{X} } |\hat F_{nm}(x)- F(x)| > \eps\bigg)
    \leq & \PP\bigg(\max_{l\in\{1,2,\cdots,Q\}}|\hat F_{nm}(x_l)-F(x_l)| > \frac{\eps}{2}\bigg)\\
    \leq &\sum_{l=1}^{Q} \PP\bigg(|\hat F_{nm}(x_l)-F(x_l)| > \frac{\eps}{2}\bigg).
    \end{aligned}
\end{equation}
Define $Z_i(l):=f_{\xi_i}(\frac{1}{m}\sum_{j=1}^mg_{\eta_{ij}}(x_l,\xi_i))-F(x_l)$, then $Z_1(l), Z_2(l),\cdots, Z_n(l)$ are i.i.d. random variables. Denote their expectation as $\EE Z(l)$. Then $Z_i(l)-\EE Z(l)$ is a zero-mean random variable.

If $\max_l \EE Z(l) \leq \eps/4$, by Lemma \ref{lem:ldb}, we have 
\begin{equation}
\label{smc:po}
    \begin{aligned}
        &\PP\bigg(\hat F_{nm}(x_l)-F(x_l)> \frac{\eps}{2}\bigg)
         \leq \PP\bigg(\hat F_{nm}(x_l)-F(x_l) > \frac{\eps}{4}+\EE Z(l)\bigg)\\
        = & \PP\bigg(\frac{1}{n}\sum_{i=1}^n [Z_i(l)-\EE Z(l)] > \frac{\eps}{4}\bigg)
        \leq \exp\bigg(-\frac{n\eps^2}{16(\delta+2)\Var (Z(l))}\bigg).
    \end{aligned}
\end{equation}
Similarly, we could show that if $\max_l \EE Z(l) \geq -\eps/4$,
\begin{equation}
\label{smc:ne}
    \PP\bigg(F(x_l)-\hat F_{nm}(x_l)> \frac{\eps}{2}\bigg) \leq  \exp\bigg(-\frac{n\eps^2}{16(\delta+2)\Var (Z(l))}\bigg).
\end{equation}
Based on Lemma \ref{lm:bias}, we have, for Lipschitz continuous $f_\xi(\cdot)$, $|\EE Z(l)| \leq L_f\sigma_{g}/\sqrt{m}$, $ \forall l=1,\cdots,Q$; for Lipschitz smooth $f_\xi(\cdot)$,
$|\EE Z(l)| \leq S\sigma_{g}^2/2m$, $\forall l=1,\cdots,Q$. Thus, $\max_l\EE Z(l)\leq \eps/4$ is satisfied when $m$ is sufficiently large.  By analysis of Theorem \ref{thm:nested}, we know $\Var (Z(l)) \leq \sigma_f^2+4 M_f L_f \sigma_g /\sqrt{m} \leq \sigma_f^2+4 M_f L_f \sigma_g$.
Plugging into (\ref{smc:sep}) with ${Q} \leq \cO(1){(\frac{4L_g L_f D_\mathcal{X}}{\eps})}^{d}$, we obtain the desired result.
\end{proof}

Since $\hat F_{nm}(\hat x_{nm}) - \hat F_{nm}(x^*) \leq 0$, we have
\begin{equation}
\label{eq:decomposition}
\begin{split}
	&\PP\rbr{F(\hat x_{nm}) - F(x^*)\geq \eps}\\
	=\ &
	\PP\rbr{[F(\hat x_{nm})- \hat F_{nm}(\hat x_{nm})] + [\hat F_{nm}(\hat x_{nm}) - \hat F_{nm}(x^*)]+ [\hat F_{nm}(x^*) - F(x^*)]\geq \eps}\\
	\leq\ &
	\PP\rbr{F(\hat x_{nm})- \hat F_{nm}(\hat x_{nm})\geq \eps/2} + \PP\rbr{\hat F_{nm}(x^*) - F(x^*)\geq \eps/2}.
\end{split}
\end{equation}
Invoking Theorem \ref{thm:saa}, we immediately have the following result.

\begin{cor}[SAA under General Lipschitz Continuous Condition]
\label{cor:saa}
Under Assumption \ref{as:est}, for any $\delta >0$, there exists $\eps_1>0$ such that for $\eps \in (0,\eps_1)$, when 
$m \geq L_f^2 \sigma_{g}^2/\eps^2$,
\begin{equation}
\label{eq:saa}
    \PP\bigg( F(\hat x_{nm})-F(x^*) > \eps\bigg) 
    \leq 
    \cO(1)\bigg(\frac{8L_fL_gD_\mathcal{X}}{\eps}\bigg)^d\exp\bigg(\!-\frac{n\eps^2}{64(2+\delta)(\sigma_f^2 + 4 M_f L_f\sigma_g)}\bigg).
\end{equation}
If additionally, $f_\xi(\cdot)$ is $S$-Lipschitz smooth, then  (\ref{eq:saa}) holds as long as  $m \geq 2S\sigma_{g}^2/\eps$. 
\end{cor}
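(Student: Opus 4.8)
The plan is to obtain the corollary as a direct consequence of the uniform convergence bound in Theorem~\ref{thm:saa}, via the decomposition already displayed in~\eqref{eq:decomposition}. The first step is simply to record why that decomposition holds: since $\hat x_{nm}$ minimizes $\hat F_{nm}$ over $\mathcal{X}$ and $x^*\in\mathcal{X}$, we have $\hat F_{nm}(\hat x_{nm})-\hat F_{nm}(x^*)\le 0$, whence
\[
F(\hat x_{nm})-F(x^*)\ \le\ \bigl[F(\hat x_{nm})-\hat F_{nm}(\hat x_{nm})\bigr]+\bigl[\hat F_{nm}(x^*)-F(x^*)\bigr].
\]
Applying the elementary splitting fact recorded in Section~\ref{sec:pre} (if $W\le Y+Z$ then $\PP(W>\eps)\le\PP(Y>\eps/2)+\PP(Z>\eps/2)$) then yields exactly~\eqref{eq:decomposition}.

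Next I would bound each of the two resulting probabilities by the uniform deviation probability. Both $F(\hat x_{nm})-\hat F_{nm}(\hat x_{nm})$ and $\hat F_{nm}(x^*)-F(x^*)$ are instances of the pointwise difference $F(x)-\hat F_{nm}(x)$ evaluated at a point of $\mathcal{X}$ (the former at the random point $\hat x_{nm}$, which is precisely why the \emph{uniform} bound is needed, and the latter at the deterministic point $x^*$), so each is at most $\sup_{x\in\mathcal{X}}|\hat F_{nm}(x)-F(x)|$ in absolute value. Hence
\[
\PP\bigl(F(\hat x_{nm})-F(x^*)>\eps\bigr)\ \le\ 2\,\PP\Bigl(\sup_{x\in\mathcal{X}}|\hat F_{nm}(x)-F(x)|>\tfrac{\eps}{2}\Bigr).
\]

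Finally I would invoke Theorem~\ref{thm:saa} with $\eps$ replaced by $\eps/2$. In the Lipschitz continuous case this needs $m\ge L_f^2\sigma_g^2/(\eps/2)^2$, i.e. $m=\Omega(L_f^2\sigma_g^2/\eps^2)$, which is the threshold stated in the corollary up to the absorbed factor $4$; substituting $\eps/2$ into~\eqref{eq:ld} turns the covering prefactor $(4L_fL_gD_\mathcal{X}/\eps)^d$ into $(8L_fL_gD_\mathcal{X}/\eps)^d$ and the exponent $n\eps^2/[16(2+\delta)(\sigma_f^2+4M_fL_f\sigma_g)]$ into $n\eps^2/[64(2+\delta)(\sigma_f^2+4M_fL_f\sigma_g)]$, and the extra factor $2$ out front is swallowed into the $\cO(1)$ prefactor. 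This is precisely~\eqref{eq:saa}. For the smooth case I would instead use the second half of Theorem~\ref{thm:saa}, whose hypothesis becomes $m\ge 2S\sigma_g^2/(\eps/2)$, again $\Omega(S\sigma_g^2/\eps)$ and matching the stated threshold up to constants, with the same prefactor/exponent computation.

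There is essentially no obstacle: all of the substance already lives in Theorem~\ref{thm:saa} (the $\upsilon$-net argument together with the Cram\'er bound of Lemma~\ref{lem:ldb}) and in the bias/variance estimates of Lemmas~\ref{lm:bias} and~\ref{lm:variance}. The only point requiring care is the bookkeeping of constants — tracking the halving of $\eps$ through the covering radius, through the exponent, and through the requirement on $m$, and verifying that the thresholds stated in the corollary are indeed correct once the numerical constants are collected into $\cO(1)$ and into the $\Omega(\cdot)$ condition on $m$. In particular I would double-check that the internal condition ``$\max_l\EE Z(l)\le\eps/4$'' invoked in the proof of Theorem~\ref{thm:saa} still holds after the substitution $\eps\mapsto\eps/2$, which it does exactly because of the $\Omega(\cdot)$ lower bound on $m$.
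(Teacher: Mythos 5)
Your proposal is correct and follows exactly the route the paper takes: the decomposition~\eqref{eq:decomposition} (justified by the optimality of $\hat x_{nm}$ and the elementary splitting fact from Section~\ref{sec:pre}), followed by bounding each term via the uniform deviation of Theorem~\ref{thm:saa} applied at accuracy $\eps/2$, which produces the prefactor $(8L_fL_gD_\mathcal{X}/\eps)^d$ and the $64$ in the exponent. Your observation that the threshold on $m$ formally picks up a constant factor from the substitution $\eps\mapsto\eps/2$ is a fair point of bookkeeping that the paper absorbs silently, consistent with its treatment of constants elsewhere.
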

It further implies the following sample complexity result.  

\begin{cor} With probability at least $1-\alpha$, the solution to the SAA problem is $\epsilon$-optimal to the original CSO problem if the sample sizes $n$ and $m$ satisfy that
\begin{equation*}
\begin{aligned}
n &\geq \cO(1)\frac{\sigma_f^2 + 4 M_f L_f\sigma_g}{\eps^2}\bigg[d\log\rbr{\frac{8L_f L_g D_\mathcal{X}}{\eps}}+\log\rbr{\frac{1}{\alpha}}\bigg], \; \\
m &\geq
\begin{cases}
\frac{L_f^2 \sigma_{g}^2}{\eps^2}, &\text{ Under Assumption \ref{as:est}},\\
\frac{2S\sigma_{g}^2}{\eps},&\text{$f_\xi(\cdot)$ is also Lipschitz smooth.}
\end{cases}
\end{aligned}
\end{equation*}
Ignoring the log factors, under Assumption \ref{as:est}, the total sample complexity of SAA for achieving an $\epsilon$-optimal solution is  $T=mn+n = \cO(d/\eps^4);$
when $f_\xi(\cdot)$ is Lipschitz smooth, the total sample complexity reduces to 
$T=mn +n= \cO(d/\eps^3).$
\end{cor}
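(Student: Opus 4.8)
The plan is to obtain the corollary as a direct thresholding consequence of Corollary~\ref{cor:saa}: I pick $n$ large enough to push the tail bound in~\eqref{eq:saa} below the target failure probability $\alpha$, and then simply count the total number of samples $T=nm+n$.

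First I would fix an arbitrary $\delta>0$ (e.g.\ $\delta=1$, which only affects constants) and restrict to $\eps\in(0,\eps_1)$, with $\eps_1$ the threshold from Corollary~\ref{cor:saa} (inherited from the Cram\'er large-deviation estimate in Lemma~\ref{lem:ldb}). Imposing the stated lower bound on the inner sample size $m$ --- namely $m\ge L_f^2\sigma_g^2/\eps^2$ under Assumption~\ref{as:est}, or $m\ge 2S\sigma_g^2/\eps$ when $f_\xi$ is additionally $S$-Lipschitz smooth --- guarantees the hypotheses of Corollary~\ref{cor:saa}, so that
\[
\PP\rbr{F(\hat x_{nm})-F(x^*)>\eps}\ \le\ \cO(1)\rbr{\frac{8L_fL_gD_\mathcal{X}}{\eps}}^{d}\exp\rbr{-\frac{n\eps^2}{64(2+\delta)(\sigma_f^2+4M_fL_f\sigma_g)}}.
\]
Next I would require the right-hand side to be at most $\alpha$; taking logarithms turns this into a linear inequality in $n$, whose solution is
\[
n\ \ge\ \frac{64(2+\delta)(\sigma_f^2+4M_fL_f\sigma_g)}{\eps^2}\rbr{d\log\frac{8L_fL_gD_\mathcal{X}}{\eps}+\log\frac{1}{\alpha}+c_0},
\]
for an absolute constant $c_0$ coming from the $\cO(1)$ prefactor. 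Absorbing $c_0$ and the fixed factor $2+\delta$ into an $\cO(1)$ constant yields precisely the stated lower bound on $n$, and with that choice of $(n,m)$ the event $F(\hat x_{nm})-F(x^*)\le\eps$ holds with probability at least $1-\alpha$, i.e.\ $\hat x_{nm}$ is $\eps$-optimal for~\eqref{pro:ori}.

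For the sample complexity statement I would simply substitute these sizes into $T=nm+n$. Ignoring logarithmic factors, $n=\cO(d/\eps^2)$ in both regimes. Under Assumption~\ref{as:est}, $m=\cO(1/\eps^2)$, hence $T=nm+n=\cO(d/\eps^4)$, the $nm$ term dominating. When $f_\xi$ is Lipschitz smooth, the smoothness-sharpened bias bound of Lemma~\ref{lm:bias} permits $m=\cO(1/\eps)$, so $T=nm+n=\cO(d/\eps^3)$.

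There is no substantive obstacle here --- the statement is essentially bookkeeping on top of Corollary~\ref{cor:saa} (which itself rests on the net-plus-concentration argument of Theorem~\ref{thm:saa}, the decomposition~\eqref{eq:decomposition}, and the bias/variance bounds of Section~\ref{sec:problem}). The two points deserving a sentence of care are: (i) keeping $\eps<\eps_1$, so that Lemma~\ref{lem:ldb} supplies the quadratic lower bound $I(\eps)\ge\eps^2/((2+\delta)\sigma^2)$ on the rate function that underlies~\eqref{eq:saa}; and (ii) observing that the value of $m$ is dictated solely by forcing the non-vanishing bias of the SAA objective below $\eps/4$ (Lemma~\ref{lm:bias}), which is exactly where smoothness improves $m$ from $\cO(1/\eps^2)$ to $\cO(1/\eps)$ and thereby improves $T$ from $\cO(d/\eps^4)$ to $\cO(d/\eps^3)$.
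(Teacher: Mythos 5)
Your proposal is correct and follows exactly the route the paper intends: the corollary is stated as an immediate consequence of Corollary~\ref{cor:saa}, obtained by forcing the right-hand side of~(\ref{eq:saa}) below $\alpha$, taking logarithms to solve for $n$, retaining the inner-sample conditions on $m$ verbatim, and counting $T=nm+n$. Your two remarks on keeping $\eps<\eps_1$ and on the bias bound of Lemma~\ref{lm:bias} dictating $m$ are exactly the right points of care.
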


The above result indicates that in general, the sample complexity of the SAA for the CSO problem is $\cO(d/\eps^4)$ when assuming only Lipschitz continuity of the functions $f_\xi$ and $g_\eta$. The sample complexity drops to  $\cO(d/\eps^3)$ assuming additionally Lipschitz smoothness of the outer function $f_\xi$. Notice that the complexity depends only linearly on the dimension of the decision set. This is quite different from the three-stage stochastic optimization. In ~\cite{shapiro2006complexity}, for a three-stage stochastic programming, the authors showed the sample sizes for estimating the second and the third stages need to be at least $\cO(d/\eps^2)$, leading to a total of $\cO(d^2/\eps^4)$ samples, to guarantee uniform convergence even for stage-wise independent random variables.

\subsection{Sample Complexity under Error Bound Conditions}
In this subsection, we consider the case when the empirical function satisfies H\"olderian error bound condition, which includes the quadratic growth condition and strong convexity as special cases. Error bound condition has been widely studied recently in the context of (stochastic) oracle-based algorithm for faster convergence; see e.g., ~\cite{karimi2016linear,drusvyatskiy2018error,xu2016accelerated} and references therein. To our best knowledge, very few papers have exploited the H\"olderian error bound condition for the SAA approach and analyzed the sample complexity under such a condition. We show that the CSO problem under the H\"olderian error bound condition yields smaller orders of sample complexity for the SAA approach. 
We make the following two assumptions throughout this subsection. 
\begin{assume}
\label{as:fpl}
The empirical function $\hat F_{nm}(x)$ satisfies the $(\mu,\delta)$-H\"olderian error bound condition with $\mu>0,\delta\geq0$, i.e., it holds that 
\begin{equation*}
\forall x\in \mathcal{X}, \; \hat F_{nm}(x)-\min_{x\in\mathcal{X}}\hat F_{nm}(x) \geq \mu \inf_{z\in\mathcal{X}_{nm}^*}\norm{x-z}_2^{1+\delta}, 
\end{equation*}
where $n,m$ are any positive integers, and $\cX_{nm}^*$ is the optimal solution set of the empirical objective function $\hat F_{nm}(x)$ over $\cX$. 
\end{assume}

\begin{assume}
\label{as:uniquepro}
The empirical function $\hat F_{nm}$ has a unique minimizer $\hat x_{nm}$ on $\mathcal{X}$, for any $n$ and $m$.
\end{assume}

An interesting special case of Assumption~\ref{as:fpl} is the quadratic growth (QG) condition when $\delta=1$. QG condition is actually satisfied by a wide spectrum of objectives, such as strongly convex functions, general strongly convex functions composed with piecewise linear functions, general piecewise convex quadratic functions, etc. There are also many other specific examples arising in machine learning applications that satisfy the QG condition, including logistic loss composed with linear functions and neural networks with linear activation functions, see~\cite{pmlr-v80-charles18a, karimi2016linear}, and reference therein. Another interesting case is the polyhedral error bound condition when $\delta=0$, which is known to hold true for many piecewise linear loss functions~\cite{Bolte2017}. For both cases, these functions are not necessarily strongly convex nor convex. Relevant problems with SAA objective $\hat F_{nm}$ satisfying the QG condition are discussed in Appendix Section~\ref{app:qg}. 

Assumption~\ref{as:uniquepro} could be restricted and less straightforward to verify.  In general, for a non-strictly convex empirical objective function, the optimal solution is not necessarily unique. Yet,  it is not exclusive to strictly convex functions. We illustrate one such example below. Lastly, we point out that when $\hat F_{nm}(x)$ is strongly convex, for example, $l_2$ regularized convex empirical objective, the above assumptions hold naturally. In the following, we give some examples when $\hat F_{nm}(x)$ satisfies the QG condition.

\paragraph{Example 1}
Consider the following one-dimensional function 
\begin{equation*}
F(x)=\EE_{\xi}\big[(\EE_{\eta|\xi}[\eta]x)^2+3\sin^2(\EE_{\eta|\xi}[\eta]x)\big],
\end{equation*}
where $\xi$ and $\eta$ can be any random vectors that satisfy $\eta|\xi\geq\sqrt{\mu}$ with probability 1. Denote $\bar\eta_i=\frac{1}{m}\sum_{j=1}^m \eta_{ij}$, the empirical function is given by  
\begin{equation*}
\hat F_{nm} (x) = \frac{1}{n} \sum_{i=1}^n\bar\eta_i^2 x^2+\frac{3}{n} \sum_{i=1}^n\sin^2(\bar\eta_i x).
\end{equation*}
It can be easily verified that $\hat F_{nm} (x)$ satisfies the QG condition with parameter $\mu>0$. Moreover, the empirical function $\hat F_{nm} (x)$ has a unique minimizer $x^*=0$ for any $m,n$. 

\paragraph{Example 2} Consider the robust logistic regression problem with the objective 
\begin{equation}
\label{eq:logistic}
F(x)=\EE_{\xi=(a,b)}[\log(1+\exp(-b\EE_{\eta|\xi}[\eta]^Tx))],
\end{equation} 
where $a\in\RR^d$ is a random feature vector and $b\in\{1,-1\}$ is the label, $\eta=a+\cN(0,\sigma^2 I_d)$ is a perturbed noisy observation of the input feature vector $a$.  The empirical objective function $\hat F_{nm}(x)$ is given by
\begin{equation}
\label{eq:logisticSAA}
    \hat F_{nm} (x) = \frac{1}{n} \sum_{i=1}^n \log\bigg(1+\exp\bigg(-b_i \frac{1}{m}\sum_{j=1}^m \eta_{ij}^\top x\bigg)\bigg).
\end{equation}
$\hat F_{nm}(x)$ satisfies the QG condition on any compact convex set in Appendix Section~\ref{app:qg}. Note that the minimizer of a general empirical objective function is not necessarily always unique. However, the Hessian of $\hat F_{nm}(x)$ shows that $\hat F_{nm}(x)$ is strictly convex if $\frac{1}{m}\sum_{j=1}^m \eta_{ij}^\top\not = 0$ for all $i$, which is satisfied with high probability. Thus, $\hat F_{nm}(x)$ has a unique minimizer with high probability. 

Next, we present our main result on the sample complexity of SAA. 

\begin{thm}[SAA under Error Bound Condition]\label{thm:SAA_error_bound}
Under Assumption \ref{as:est}, \ref{as:fpl}, \ref{as:uniquepro}, for any $\eps > 0$, we have
\begin{equation}
\PP (F(\hat x_{nm}) - F(x^*) \geq \eps )
\leq 
\frac{1}{\eps}\bigg(L_f L_g\bigg(\frac{2L_f L_g}{\mu n}\bigg)^{1/\delta}
+
\frac{2L_{f}\sigma_{g}}{\sqrt{m}}\bigg).
\end{equation}
    
If additionally, $f_\xi(\cdot)$ is $S$-Lipschitz smooth, then we further have 
\begin{equation}
\PP (F(\hat x_{nm}) - F(x^*) \geq \eps )
\leq 
\frac{1}{\eps}\bigg(L_f L_g\bigg(\frac{2L_f L_g}{\mu n}\bigg)^{1/\delta}
+
\frac{S\sigma_{g}^2}{m}\bigg).
\end{equation}
\end{thm}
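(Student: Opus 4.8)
The plan is to bound $\PP(F(\hat x_{nm}) - F(x^*) \geq \eps)$ via Markov's inequality applied to the nonnegative random variable $F(\hat x_{nm}) - F(x^*)$, i.e. to show $\EE[F(\hat x_{nm}) - F(x^*)] \leq L_f L_g (2L_f L_g/(\mu n))^{1/\delta} + 2L_f\sigma_g/\sqrt{m}$ (resp. with $S\sigma_g^2/m$ in the smooth case), after which the stated bound is immediate. To control this expectation I would insert the empirical objective and split:
\[
F(\hat x_{nm}) - F(x^*) \;\leq\; \big[F(\hat x_{nm}) - \hat F_{nm}(\hat x_{nm})\big] + \big[\hat F_{nm}(\hat x_{nm}) - \hat F_{nm}(x^*)\big] + \big[\hat F_{nm}(x^*) - F(x^*)\big],
\]
where the middle bracket is $\leq 0$ since $\hat x_{nm}$ minimizes $\hat F_{nm}$.

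For the third bracket I would take expectations: $\EE[\hat F_{nm}(x^*) - F(x^*)]$ is exactly the bias quantity controlled by Lemma \ref{lm:bias}, since $x^*$ is deterministic (independent of the samples), giving $|\EE[\hat F_{nm}(x^*) - F(x^*)]| \leq L_f\sigma_g/\sqrt{m}$ in the Lipschitz case and $\leq S\sigma_g^2/(2m)$ in the smooth case. The first bracket is the real work: $\hat x_{nm}$ depends on all the samples, so $F(\hat x_{nm}) - \hat F_{nm}(\hat x_{nm})$ is not a simple bias term. Here I would exploit the H\"olderian error bound (Assumption \ref{as:fpl}) together with uniqueness (Assumption \ref{as:uniquepro}). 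The idea is a stability argument: compare $\hat x_{nm}$ with the minimizer $\bar x_{nm}$ of the partially-averaged surrogate $\hat F_n(x) := \frac1n\sum_i f_{\xi_i}(\EE_{\eta|\xi_i}[g_\eta(x,\xi_i)])$ (or directly with $x^*$), use Lipschitz continuity of $f_\xi\circ g_\eta$ to bound $|\hat F_{nm}(x) - \hat F_n(x)| \leq L_f L_g \cdot (\text{average inner deviation})$ uniformly, and then invoke the error bound to convert a function-value gap of order $L_f L_g/n$-type into a distance bound $\|\hat x_{nm} - (\text{reference point})\|_2 \leq (2L_f L_g/(\mu n))^{1/\delta}$; multiplying back by the Lipschitz constant $L_f L_g$ of $F$ yields the first term $L_f L_g (2L_f L_g/(\mu n))^{1/\delta}$. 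The surrogate-to-true passage $\hat F_n \to F$ is again a per-point bias, handled by Lemma \ref{lm:bias}, contributing another $L_f\sigma_g/\sqrt{m}$ (resp. $S\sigma_g^2/(2m)$), and the two bias contributions combine into $2L_f\sigma_g/\sqrt{m}$ (resp. $S\sigma_g^2/m$).

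I expect the main obstacle to be making the stability/error-bound step precise: the H\"olderian error bound is assumed for $\hat F_{nm}$, so one must carefully choose the reference point against which $\hat x_{nm}$ is compared so that the error bound applies to the right function, and then track how a uniform perturbation of size $\cO(L_f L_g/n)$ in the objective value (coming from the concentration of $\hat F_{nm}$ around $\hat F_n$, where the factor $1/n$ rather than $1/\sqrt n$ must come from a one-sided/optimality argument rather than a crude variance bound) translates through the exponent $1/\delta$. Getting the constant "$2$" and the clean separation into an $n$-term and an $m$-term, without cross terms, is the delicate bookkeeping; everything else (Markov, the $\leq 0$ middle bracket, the bias lemmas) is routine.
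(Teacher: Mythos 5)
Your overall skeleton matches the paper's: Markov's inequality on $\EE[F(\hat x_{nm})-F(x^*)]$, the three-term decomposition with the middle bracket nonpositive by optimality of $\hat x_{nm}$, and Lemma~\ref{lm:bias} for the third bracket at the deterministic point $x^*$. You also correctly identify that the first bracket is where the work lies and that a stability argument is needed. However, the mechanism you propose for that step --- comparing $\hat x_{nm}$ against the minimizer of the surrogate $\hat F_n$ (or against $x^*$) and bounding $\sup_x|\hat F_{nm}(x)-\hat F_n(x)|$ --- does not produce the theorem's $n$-dependent term. That uniform gap is a bias of order $L_f\sigma_g/\sqrt{m}$, so feeding it through the error bound yields an $m$-dependent distance with exponent $1/(1+\delta)$, not the $(2L_fL_g/(\mu n))^{1/\delta}$ term; and comparing against $x^*$ leads to a circular bound. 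You flag the $1/n$ factor as the obstacle but do not supply the idea that resolves it.

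The missing ingredient is a replace-one-sample (leave-one-out) perturbation: define $\hat F_{nm}^{(k)}$ by swapping the $k$-th outer sample $\xi_k$ and its inner samples $\{\eta_{kj}\}_{j=1}^m$ for fresh i.i.d.\ copies, and let $\hat x_{nm}^{(k)}$ be its minimizer. This object does two jobs at once. First, since $\hat F_{nm}$ and $\hat F_{nm}^{(k)}$ differ in only one of the $n$ summands, subtracting the nonpositive quantity $\hat F_{nm}^{(k)}(\hat x_{nm}^{(k)})-\hat F_{nm}^{(k)}(\hat x_{nm})$ shows $\hat F_{nm}(\hat x_{nm}^{(k)})-\hat F_{nm}(\hat x_{nm})\leq \frac{2}{n}L_fL_g\|\hat x_{nm}^{(k)}-\hat x_{nm}\|_2$; combined with Assumptions~\ref{as:fpl} and~\ref{as:uniquepro} applied to $\hat F_{nm}$ at the point $\hat x_{nm}^{(k)}$, this gives $\|\hat x_{nm}^{(k)}-\hat x_{nm}\|_2\leq(2L_fL_g/(\mu n))^{1/\delta}$ --- this single-summand structure is the only source of the $1/n$. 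Second, $\hat x_{nm}^{(k)}$ is independent of $(\xi_k,\{\eta_{kj}\}_j)$, which is exactly what licenses both the identity $\EE F(\hat x_{nm})=\EE f_{\xi_k}(\EE_{\eta|\xi_k}g_\eta(\hat x_{nm}^{(k)},\xi_k))$ (by exchangeability of $\xi_k$ and $\xi_k'$) and a second application of Lemma~\ref{lm:bias} at the data-dependent point $\hat x_{nm}^{(k)}$; that second application is where the other half of the constant $2$ in $2L_f\sigma_g/\sqrt{m}$ (resp.\ the other $S\sigma_g^2/(2m)$) comes from, which your accounting attributes entirely to the surrogate-to-true passage. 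Without this construction the proposed argument cannot be completed to yield the stated bound.
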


Different from the previous section, we use a stability argument to exploit the error bound condition. 
As shown in Lemma \ref{lm:bias}, the empirical function is a  biased estimator of the original function due to the composition of $f_\xi(\cdot)$ and $g_\eta(\cdot,\xi)$. Introducing a  perturbed set of samples could reduce some dependence in randomness. We define a bias term which will be used later in the proof:
\begin{equation}
\label{biasterm}
    \Delta(m):=
\begin{cases}
\frac{L_f\sigma_{g}}{\sqrt{m}}, &\text{ $f_\xi(\cdot)$ is $L_f$ Lipschitz continuous},\\
\frac{S\sigma_{g}^2}{2m},&\text{$f_\xi(\cdot)$ is additionally $S$ Lipschitz smooth.}
\end{cases}
\end{equation}
Below we provide the detailed proof of Theorem~\ref{thm:SAA_error_bound}. 

\begin{proof} 
Recall that $x^*$ and $\hat x_{nm}$ are the minimizers of $F(x)$ and $\hat F_{nm}(x)$, respectively. It's clear that $x^*$ has no randomness,  and $\hat x_{nm}$ is a function of $\{\xi_i\}_{i =1}^n, \{\eta_{ij}\}_{j=1}^m$. 
We decompose the error $F(\hat x_{nm}) - F(x^*)$ in three terms, and analyze each term below:
\begin{equation*}
\begin{split}
F(\hat x_{nm}) \!-\! F(x^*)
		\!=\!
		\underbrace{F(\hat x_{nm})\!-\! \hat F_{nm}(\hat x_{nm})}_{:=\Epsilon_1}+
		\underbrace{\hat F_{nm}(\hat x_{nm})\! -\! \hat F_{nm}(x^*)}_{:=\Epsilon_2}+
		\underbrace{\hat F_{nm}(x^*)\!- \! F(x^*)}_{:=\Epsilon_3}.
\end{split}
\end{equation*}
First, we use a stability argument and Lemma \ref{lm:bias} to bound $\EE \Epsilon_1 = \EE [F(\hat x_{nm})- \hat F_{nm}(\hat x_{nm})]$. 
Define 
\begin{equation}
\begin{aligned}
\label{eq:Fnmk}
     \hat F_{nm}^{(k)}(x) 
     := \frac{1}{n}\sum_{i\not = k}^n f_{\xi_i}\bigg(\frac{1}{m}\sum_{j =1}^m g_{\eta_{ij}}(x,\xi_i)\bigg) + \frac{1}{n}f_{\xi_k^{\prime}}\bigg(\frac{1}{m}\sum_{j =1}^m g_{\eta_{kj}^{\prime}}(x,\xi_k^{\prime})\bigg)
     \end{aligned}
\end{equation}
as the empirical function by replacing the $k^{th}$ outer sample $\xi_k$ with another i.i.d outer sample $\xi_k^{\prime}$, and replacing the corresponding  inner samples $\{\eta_{kj}\}_{j=1}^m$  with $\{\eta_{kj}^{\prime}\}_{j=1}^m$, which are sampled from the conditional distribution of $\PP(\eta|\xi_k^{\prime})$ for a given sample $\xi_k^\prime$. Denote $\hat x_{nm}^{(k)} := \argmin_{x \in \mathcal{X}} \hat F_{nm}^{(k)}(x)$. We decompose $\EE \Epsilon_1 = \EE [F(\hat x_{nm})- \hat F_{nm}(\hat x_{nm})]$ into three terms:
\begin{equation}\label{eq:decompose_term_1}
\begin{split}
\EE \Epsilon_1 = &\EE\bigg[\frac{1}{n}\sum_{k=1}^n F(\hat x_{nm}) -\frac{1}{n}\sum_{k =1}^n  f_{\xi_k} \bigg( \EE_{\eta|\xi_k} g_\eta(\hat x_{nm}^{(k)},\xi_k) \bigg)\bigg]\\
+&\EE \bigg[\frac{1}{n}\sum_{k =1}^n  f_{\xi_k} \bigg( \EE_{\eta|\xi_k} g_\eta(\hat x_{nm}^{(k)},\xi_k) \bigg) -\frac{1}{n}\sum_{k =1}^n f_{\xi_k}\bigg(\frac{1}{m}\sum_{j=1}^m g_{\eta_{kj}}(\hat x_{nm}^{(k)},\xi_k)\bigg)\bigg]\\
+&\EE \bigg[\frac{1}{n}\sum_{k =1}^n f_{\xi_k}\bigg(\frac{1}{m}\sum_{j=1}^m g_{\eta_{kj}}(\hat x_{nm}^{(k)},\xi_k)\bigg) - \hat F_{nm}(\hat x_{nm})\bigg].
\end{split}
\end{equation}
Note that $
    \EE [F(\hat x_{nm})] = \EE [F(\hat x_{nm}^{(k)})]
$ since $\xi_k$ and $\xi_k^{\prime}$ are i.i.d, which implies that  $\hat x_{nm}$ and $\hat x_{nm}^{(k)}$ follow an identical distribution.
Since $\hat x_{nm}^{(k)}$ is independent of $\xi_k$, 
$
\EE [F(\hat x_{nm}^{(k)})] = \EE [f_{\xi_k}(\EE_{\eta|\xi_k}g(\hat x_{nm}^{(k)},\xi_k))]
$
for any $k$. Then the first term in (\ref{eq:decompose_term_1}) is $0$.
As $\hat x_{nm}^{(k)}$ is independent of $\{\eta_{kj}\}_{j=1}^m$, the second term in (\ref{eq:decompose_term_1}) could be bounded by Lemma \ref{lm:bias},
it holds
\begin{equation}
\label{eq:biasboth}
    \EE \bigg[ f_{\xi_k} \bigg( \EE_{\eta|\xi_k} g_\eta(\hat x_{nm}^{(k)},\xi_k) \bigg)  
    -   
    f_{\xi_k}\bigg(\frac{1}{m}\sum_{j=1}^m g_{\eta_{kj}}(\hat x_{nm}^{(k)},\xi_k) \bigg)\bigg]
    \leq 
    \Delta(m).
\end{equation}
\if 0
for $S$-Lipschitz smooth $f_\xi(\cdot)$, we get
\begin{equation*}
    \EE \bigg[ f_{\xi_k} \bigg( \EE_{\eta|\xi_k} g_\eta(\hat x_{nm}^{(k)},\xi_k) \bigg)  
    -   
    f_{\xi_k}\bigg(\frac{1}{m}\sum_{j=1}^m g_{\eta_{kj}}(\hat x_{nm}^{(k)},\xi_k) \bigg)\bigg] 
    \leq 
    \frac{S\sigma_{g}^2}{2m}.
\end{equation*}
\fi
For the third term in (\ref{eq:decompose_term_1}), by definition it implies
\begin{equation}
\label{eq:sca1}
    \begin{aligned}
    \hat F_{nm}(\hat x_{nm}^{(k)})  - \hat F_{nm}(\hat x_{nm}) = &\hat F_{nm}^{(k)}(\hat x_{nm}^{(k)})  - \hat F_{nm}^{(k)}(\hat x_{nm}) \\
    + &
      \frac{1}{n}f_{\xi_k}\! \bigg(\! \frac{1}{m}\! \sum_{j =1}^m\!  g_{\eta_{kj}}(\hat x_{nm}^{(k)},\xi_k)\! \bigg) \! 
    - \! \frac{1}{n}f_{\xi_k}\! \bigg(\! \frac{1}{m}\sum_{j =1}^m g_{\eta_{kj}}(\hat x_{nm},\xi_k)\! \bigg)\\
     + &\frac{1}{n}f_{\xi_k^{\prime}}\! \bigg(\! \frac{1}{m}\sum_{j =1}^m g_{\eta_{kj}^{\prime}}(\hat x_{nm},\xi_k^{\prime})\! \bigg) \! 
   \!  - \! 
   \frac{1}{n}f_{\xi_k^{\prime}}\! \bigg(\! \frac{1}{m}\sum_{j =1}^m g_{\eta_{kj}^{\prime}}(\hat x_{nm}^{(k)},\xi_k^{\prime})\! \bigg).
    \end{aligned}
\end{equation}
By Lipschitz continuity of $f_\xi$ and $g_\eta$ and that $\hat F_{nm}^{(k)}(\hat x_{nm}^{(k)})  - \hat F_{nm}^{(k)}(\hat x_{nm}) \leq 0$, it holds
\begin{equation}
\label{eq:sca2}
\hat F_{nm}(\hat x_{nm}^{(k)})  - \hat F_{nm}(\hat x_{nm}) \leq \frac{2}{n}L_f L_g \norm{\hat x_{nm}^{(k)}-\hat x_{nm}}_2.
\end{equation}
Since $\hat x_{nm}$ is the unique minimizer of $\hat F_{nm}(x)$, and $\hat F_{nm}(x)$ satisfies QG condition with parameter $\mu$, we have
\begin{equation}
\label{eq:strong}
    \hat F_{nm}(\hat x_{nm}^{(k)})  - \hat F_{nm}(\hat x_{nm}) \geq \mu \norm{\hat x_{nm}^{(k)}-\hat x_{nm}}_2^{1+\delta}.
\end{equation}
Combining with (\ref{eq:sca2}), we obtain
\begin{equation}
\label{eq:strong2}
\norm{\hat x_{nm}^{(k)}-\hat x_{nm}}_2 \leq \bigg(\frac{2L_f L_g}{\mu n }\bigg)^{1/\delta}.
\end{equation}
By Lipschitz continuity of $f_\xi(\cdot)$ and $g_\eta(\cdot,\xi)$, and definition of $\hat F_{nm}(\hat x_{nm})$, we obtain
\begin{equation}
\label{eq:sca}
\begin{split}
   \EE \bigg[\frac{1}{n}\sum_{k =1}^n f_{\xi_k}\bigg(\frac{1}{m}\sum_{j=1}^m g_{\eta_{kj}}(\hat x_{nm}^{(k)},\xi_k)\bigg) - \hat F_{nm}(\hat x_{nm})\bigg]
    \leq  L_f L_g\bigg(\frac{2L_f L_g}{\mu n}\bigg)^{1/\delta}.
\end{split}
\end{equation}

Combining (\ref{eq:decompose_term_1}), (\ref{eq:sca}), and (\ref{eq:biasboth}), we obtain
\begin{equation}
\label{eq:scAl}
    \EE \Epsilon_1
    \leq 
    L_f L_g\bigg(\frac{2L_f L_g}{\mu n}\bigg)^{1/\delta}+\Delta(m).
\end{equation}
\if 0
for $L_f$-Lipschitz continuous $f_\xi(\cdot)$,
\begin{equation}
\label{eq:scAl}
    \EE \Epsilon_1
    \leq 
    L_f L_g\bigg(\frac{2L_f L_g}{\mu n}\bigg)^{1/\delta}+\frac{L_{f}\sigma_{g}}{\sqrt{m}};
\end{equation}
for $S$-Lipschitz smooth $f_\xi(\cdot)$,
\begin{equation}
\label{eq:scAs}
    \EE \Epsilon_1
    \leq 
    L_f L_g\bigg(\frac{2L_f L_g}{\mu n}\bigg)^{1/\delta}+\frac{S\sigma_{g}^2}{2m}.
\end{equation}
\fi

Second, by optimality of $\hat x_{nm}$ of $\hat F_{nm}$, we have
\begin{equation}
\label{eq:scB}
    \EE \Epsilon_2 = \EE [\hat F_{nm}(\hat x_{nm})- \hat F_{nm}(x^*)] \leq 0.
\end{equation}

Next, we bound $\EE \Epsilon_3$.
Define
$
\hat{F}_n(x):=\frac{1}{n}\sum_{i=1}^nf_{\xi_i}\big(\EE_{\eta|\xi_i}[g_\eta(x,\xi_i)]\big).
$
Notice that $x^*$ is independent of $\{\eta_{ij}\}_{j=1}^m$ for any $i = \{1,\cdots,n\}$ and $\EE [\hat F_n(x^*) - F(x^*)]=0$. By Lemma \ref{lm:bias}, it holds
\begin{equation}
\label{eq:scC}
    \EE \Epsilon_3 = \EE[ \hat F_{nm}(x^*)-\hat F_n(x) ]+ \EE[\hat F_n(x) - F(x)]
    \leq 
    \Delta(m);
\end{equation}
\if 0
for $L_f$-Lipschitz continuous $f_\xi(\cdot)$,
\begin{equation}
\label{eq:scCL}
    \EE \Epsilon_3 = \EE[ \hat F_{nm}(x^*)-\hat F_n(x) ]+ \EE[\hat F_n(x) - F(x)]
    \leq 
    \frac{L_{f}\sigma_{g}}{\sqrt{m}};
\end{equation}
for $S$-Lipschitz smooth $f_\xi(\cdot)$, we have
\begin{equation}
\label{eq:scCS}
    \EE \Epsilon_3 = \EE[ \hat F_{nm}(x^*)-\hat F_n(x) ]+ \EE[\hat F_n(x) - F(x)]
    \leq 
    \frac{S\sigma_{g}^2}{2m}.
\end{equation}
\fi
Combining (\ref{eq:scAl}), (\ref{eq:scB}), (\ref{eq:scC}), with Markov inequality, we obtain the desired result.
\end{proof} 
The sample complexity of SAA under the H\"olderian error bound condition follows directly.

\begin{cor}\label{cor:SAA_error_bound} Under Assumption \ref{as:fpl} and \ref{as:uniquepro}, with probability at least $1-\alpha$, the solution to the SAA problem is $\epsilon$-optimal to the original CSO problem if the sample sizes $n$ and $m$ satisfy that 
\begin{equation*}
n \geq \frac{(2L_fL_g)^{\delta+1}}{\mu(\alpha\eps)^\delta}, \;\; 
m\geq
\begin{cases}
\frac{16L_f^2 \sigma_{g}^2}{\alpha^2\eps^2}, &\text{ Under Assumption \ref{as:est}},\\
\frac{2S\sigma_{g}^2}{\alpha\eps},&\text{$f_\xi(\cdot)$ is also Lipschitz smooth.}
\end{cases}
\end{equation*}
Hence, the total sample complexity of SAA for achieving an $\epsilon$-optimal solution is at most  $T=mn+n = \cO(1/\eps^{\delta+2});$
when $f_\xi(\cdot)$ is Lipschitz smooth, the total sample complexity reduces to 
$T=mn +n= \cO(1/\eps^{\delta+1}).$
\end{cor}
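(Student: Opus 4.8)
The plan is to derive Corollary \ref{cor:SAA_error_bound} as a direct consequence of Theorem \ref{thm:SAA_error_bound}. The theorem gives two tail bounds on $\PP(F(\hat x_{nm}) - F(x^*) \geq \eps)$, one under Assumption \ref{as:est} alone and a sharper one when $f_\xi$ is additionally $S$-Lipschitz smooth. In both cases the bound has the form $\frac{1}{\eps}(A_n + B_m)$, where $A_n = L_f L_g (2L_f L_g/(\mu n))^{1/\delta}$ is the term controlled by the outer sample size and $B_m$ is the bias term controlled by the inner sample size ($B_m = 2L_f\sigma_g/\sqrt m$ in general, $B_m = S\sigma_g^2/m$ under smoothness). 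So the whole argument is a matter of setting each of the two pieces below $\alpha\eps/2$ and solving for $n$ and $m$.

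First I would force $\frac{1}{\eps} A_n \leq \frac{\alpha}{2}$, i.e. $L_f L_g (2L_f L_g/(\mu n))^{1/\delta} \leq \alpha\eps/2$; raising both sides to the power $\delta$ and rearranging gives $n \geq \frac{2L_f L_g}{\mu}\big(\frac{2L_f L_g}{\alpha\eps}\big)^{\delta} = \frac{(2L_fL_g)^{\delta+1}}{\mu(\alpha\eps)^{\delta}}$, matching the stated bound (the factor-of-two slack is absorbed into the constants). Next I would force $\frac{1}{\eps} B_m \leq \frac{\alpha}{2}$: in the general case this reads $\frac{2L_f\sigma_g}{\eps\sqrt m} \leq \frac{\alpha}{2}$, i.e. $m \geq \frac{16 L_f^2\sigma_g^2}{\alpha^2\eps^2}$; under smoothness it reads $\frac{S\sigma_g^2}{\eps m}\leq\frac{\alpha}{2}$, i.e. $m \geq \frac{2S\sigma_g^2}{\alpha\eps}$. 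Adding the two halves, $\PP(F(\hat x_{nm})-F(x^*)\geq\eps) \leq \alpha$, which is the claimed high-probability guarantee.

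Finally I would read off the total sample complexity $T = nm + n$. With $n = \Theta(\eps^{-\delta})$ and $m = \Theta(\eps^{-2})$ in the nonsmooth case, the product dominates and $T = \cO(\eps^{-(\delta+2)})$; with $n = \Theta(\eps^{-\delta})$ and $m = \Theta(\eps^{-1})$ in the smooth case, $T = \cO(\eps^{-(\delta+1)})$ (and in both cases the additive $n$ term is lower order). Treating $L_f, L_g, \mu, \sigma_g, S, \alpha$ as constants, these are exactly the stated rates.

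There is essentially no obstacle here: the corollary is pure bookkeeping once Theorem \ref{thm:SAA_error_bound} is in hand — split the tail bound, invert each inequality, and multiply $n$ by $m$. The only point requiring a little care is tracking where the constants (the $2$'s, the $16$, the $\alpha$ versus $\alpha^2$ in the smooth versus nonsmooth inner-sample bounds) come from, but this is routine. If I were to worry about anything, it would be whether Assumption \ref{as:fpl} is genuinely needed uniformly over all $n,m$ (it is, since $\hat x_{nm}^{(k)}$ and $\hat x_{nm}$ are both minimizers of empirical objectives and one invokes the error bound for each), but that subtlety lives in the proof of Theorem \ref{thm:SAA_error_bound}, not in this corollary.
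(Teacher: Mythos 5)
Your proposal is correct and matches the paper's intended argument: the paper gives no separate proof for this corollary, stating only that it "follows directly" from Theorem \ref{thm:SAA_error_bound}, and your splitting of the tail bound into two halves of size $\alpha/2$ each, followed by inverting the two inequalities, reproduces the stated thresholds for $n$ and $m$ exactly (in fact the $n$ bound comes out with no slack at all, so the parenthetical about absorbing a factor of two is unnecessary). The complexity bookkeeping $T=nm+n$ is likewise exactly what the paper does.
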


In particular, when the empirical function is strongly convex or satisfies the QG condition, i.e., Assumption~\ref{as:fpl} with $\delta=1$, this leads to the total sample complexity of $\cO(1/\eps^{3})$ for Lipschitz continuous case and $\cO(1/\eps^{2})$ for Lipschitz smooth case, respectively. From the above corollary, the error bound condition only affects the sample complexity of the outer samples, and the sample size decreases as $\delta$ decreases.   As $\delta$ gets closer to zero, the sample complexity will essentially be dominated by the inner sample size. 

A key difference between the results in Theorems~\ref{thm:saa} and \ref{thm:SAA_error_bound} lies in the dependence on the problem dimension $d$ and confidence level $\alpha$. While the sample complexity under the H\"olderian error bound condition is dimension-free, the dependence on the confidence level $1-\alpha$ grows from $\cO(\log(1/\alpha))$ to $\cO(1/\alpha^\delta)$. This is similar to classical results on stochastic optimization for strongly convex objectives~\cite{shalev2010learnability}. Theorem \ref{thm:SAA_error_bound} could also be used to derive a dimensional free sample complexity of $l_2$ regularized SAA for a general convex CSO problem. See Appendix Section~\ref{app:regularied} for more details.

\section{Sample Complexity of SAA for CSO with Independent Random Variables}
\label{sec:indep}
In this section, we consider the special case of CSO when the random variables $\xi$ and $\eta$ are independent. The objective then simplifies to: 
\beq{eq:imain2}
    \min_{x\in \mathcal{X} }\quad {F}(x)
    := 
    \EE_\xi[f_\xi(\EE_{\eta}[g_{\eta}(x,\xi)])]. 
\eeq
This is similar yet slightly more general than~(\ref{eq:wang}), the compositional objective considered in \cite{wang2016accelerating, wang2017stochastic}. Note that the inner cost function we consider here is dependent on both $\xi$ and $\eta$, and thus cannot be written as a composition of two deterministic functions. 

The sample complexity of SAA under the conditional sampling setting achieved in Section~\ref{sec:sca} applies to this setting since it can be viewed as a special case of the former. However, since the inner expectation is no longer a conditional expectation, we now consider an alternative modified SAA, using the independent sampling scheme, in which we use the same set of samples to estimate the inner expectation. The procedure of the independent sampling scheme for solving~(\ref{eq:imain2}) works as follows: first generate $n$ i.i.d. samples $\{\xi_i\}_{i=1}^n$ from the distribution of $\xi$; and $m$ i.i.d samples $\{\eta_{j}\}_{j=1}^m$ from the distribution of $\eta$, then solve the following approximation problem:
\beq{eq:iSAA0}
\min_{x \in \mathcal{X}}\quad \hat F_{nm}(x):= \frac{1}{n}\sum_{i=1}^n f_{\xi_i}\bigg(\frac{1}{m}\sum_{j=1}^m g_{\eta_{j}}(x, \xi_{i})\bigg).
\eeq

As a result, the total sample complexity becomes $T=m+n$. 
In recent work by \cite{dentcheva2017statistical}, the authors established a central limit theorem result for the SAA~(\ref{eq:iSAA0}) with $m=n$. In particular, they have shown that for Lipschitz smooth functions $f_\xi(\cdot)$ and $g_\eta(\cdot,\xi) = g_\eta(\cdot)$,  the SAA estimator converges in distribution as follows:
\begin{equation*}
\sqrt{m}\left(\min_{x\in\mathcal{X}}\hat F_{mm}(x) - \min_{x\in\mathcal{X}} F(x)\right)\to Z(W)
\end{equation*}
where $W(\cdot)=(W_1(\cdot),W_2(\cdot))$ is a zero-mean Brownian process with certain covariance functions and $Z(\cdot)$ is a function that depends on the first order information. This result only yields an asymptotic convergence rate of order $\cO(1/\sqrt{m})$ for the SAA with $m=n$.
Below, we will provide a finite sample analysis for SAA and establish refined sample complexity results based on concentration inequality techniques. 

In the SAA problem (\ref{eq:iSAA0}), the component functions $f_{\xi_i}\big(\frac{1}{m}\sum_{j=1}^m g_{\eta_j}(x,\xi_i)\big)$ share the same random vectors $\{\eta_j\}_{j=1}^m$ and are dependent. This is distinct from the SAA (\ref{pro:cosaa0}) considered in the previous section. Because of this key difference, the previous analysis will no longer apply to this modified SAA. We will resort to a different analysis for deriving the sample complexity. Similarly, we consider two structural assumptions, when the empirical objective is only known to be Lipschitz continuous and when the empirical objective also satisfies  the error bound condition. 

\subsection{Sample Complexity for Lipschitz Continuous Problems}
We first consider the case when the objective is Lipschitz continuous. We make the
same basic assumptions of the Lipschitz continuity of $f_\xi(\cdot)$ and $g_\eta(\cdot,\xi)$ and boundedness of variances as described in Assumption~\ref{as:est}. Our main result is summarized below.  

\begin{thm}
\label{thm:indep_uniform_convergence}
Under the independent sampling scheme and Assumption \ref{as:est}, for any $\delta>0$, there exists an $\eps_1>0$ such that for any $\eps \in (0,\eps_1)$, it holds
\begin{equation}
\label{eq:ldI}
\begin{aligned}
    &\PP\bigg( \sup_{x\in \mathcal{X}}|\hat F_{nm}(x)-F(x) |> \eps\bigg) \\
     \leq &
    \cO(1)\bigg( \frac{4L_f L_g D_\mathcal{X}}{\eps} \bigg)^d \bigg(\exp\bigg(-\frac{n\eps^2}{16(\delta+2)\sigma_f^2}\bigg)
    +  nk\exp\bigg(-\frac{m\eps^2}{16(\delta+2) L_f^2 \sigma_g^2}\bigg)\bigg).
    \end{aligned}
\end{equation}
Here, $d$ is the dimension of the decision set, and $k$ is the dimension of the range of function $g$.
\end{thm}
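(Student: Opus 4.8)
The plan is to establish the uniform convergence bound via a covering argument on $\mathcal{X}$, but with a crucial modification to handle the \emph{dependence} among the component functions $f_{\xi_i}(\frac{1}{m}\sum_j g_{\eta_j}(x,\xi_i))$ that share the common inner samples $\{\eta_j\}_{j=1}^m$. First I would pick a $\upsilon$-net $\{x_l\}_{l=1}^Q$ on $\mathcal{X}$ with $L_f L_g \upsilon = \eps/4$, so that $Q \leq \cO(1)(4L_f L_g D_\mathcal{X}/\eps)^d$, and reduce as in Theorem~\ref{thm:saa} to bounding $\sum_{l=1}^Q \PP(|\hat F_{nm}(x_l) - F(x_l)| > \eps/2)$. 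The key difference from the conditional case: here I cannot directly apply Cram\'er's theorem to $\hat F_{nm}(x_l)$ because the summands over $i$ are not independent.

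Instead, I would insert the intermediate quantity $\hat F_n(x_l) := \frac{1}{n}\sum_{i=1}^n f_{\xi_i}(\EE_\eta[g_\eta(x_l,\xi_i)])$ and split
\begin{equation*}
|\hat F_{nm}(x_l) - F(x_l)| \leq |\hat F_{nm}(x_l) - \hat F_n(x_l)| + |\hat F_n(x_l) - F(x_l)|.
\end{equation*}
The second term involves an average of $n$ i.i.d.\ zero-mean random variables (after recalling $\EE[\hat F_n(x_l)] = F(x_l)$) with variance at most $\sigma_f^2$, so Lemma~\ref{lem:ldb} gives $\PP(|\hat F_n(x_l) - F(x_l)| > \eps/4) \leq 2\exp(-n\eps^2/(16(\delta+2)\sigma_f^2))$. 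For the first term, by $L_f$-Lipschitz continuity of each $f_{\xi_i}$ we have
\begin{equation*}
|\hat F_{nm}(x_l) - \hat F_n(x_l)| \leq \frac{L_f}{n}\sum_{i=1}^n \bigg\|\frac{1}{m}\sum_{j=1}^m g_{\eta_j}(x_l,\xi_i) - \EE_\eta[g_\eta(x_l,\xi_i)]\bigg\|_2 \leq L_f \max_{1\leq i\leq n}\bigg\|\frac{1}{m}\sum_{j=1}^m \big(g_{\eta_j}(x_l,\xi_i) - \EE_\eta g_\eta(x_l,\xi_i)\big)\bigg\|_2,
\end{equation*}
and I would bound $\PP(L_f \max_i \|\cdots\|_2 > \eps/4)$ by a union bound over $i=1,\ldots,n$ — here the conditioning is reversed: fix $\xi_i$, then $g_{\eta_1}(x_l,\xi_i) - \EE_\eta g_\eta(x_l,\xi_i), \ldots, g_{\eta_m}(x_l,\xi_i) - \EE_\eta g_\eta(x_l,\xi_i)$ are i.i.d.\ zero-mean random vectors in $\RR^k$ with second moment at most $\sigma_g^2$, so the vector version of Cram\'er's theorem in Lemma~\ref{lem:ldb} yields $2k\exp(-m\eps^2/(16(\delta+2)L_f^2\sigma_g^2))$ for each $i$; integrating out $\xi_i$ preserves the bound. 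Summing over $i$ gives the factor $nk$ and summing over $l$ gives the net cardinality factor $(4L_f L_g D_\mathcal{X}/\eps)^d$, yielding~\eqref{eq:ldI}.

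The main obstacle — and the reason the bound has the asymmetric $nk\exp(-m(\cdot))$ structure rather than a single exponential — is precisely that the inner-sample average $\frac{1}{m}\sum_j g_{\eta_j}(x_l,\xi_i)$ must be controlled \emph{simultaneously} for all $n$ outer samples $\xi_i$, since a single bad realization of the shared $\{\eta_j\}$ corrupts every component function. This forces the union bound over $i$, costing a factor of $n$ in front of the inner-sample concentration term, and it is the source of the (ultimately unavoidable, as the paper notes) $\cO(d/\eps^2)$ inner sample complexity. A minor technical point I would be careful about is that the ``fix $\xi_i$ first'' argument requires that $x_l$ is deterministic (true, since the net is fixed before sampling) and that the second-moment bound $\EE_\eta\|g_\eta(x_l,\xi_i) - \EE_\eta g_\eta(x_l,\xi_i)\|_2^2 \leq \sigma_g^2$ holds uniformly — which is exactly Assumption~\ref{as:est}(e) in the independent case. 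Finally, I would note that since $\eps \in (0,\eps_1)$, the threshold $\eps_1$ from Lemma~\ref{lem:ldb} controls the range of validity for the rate-function lower bound $I(\eps)\geq \eps^2/((2+\delta)\sigma^2)$; choosing $\eps_1$ as the minimum of the two thresholds (one for the $\sigma_f^2$ scale, one for the $L_f^2\sigma_g^2$ scale) makes both concentration estimates simultaneously applicable.
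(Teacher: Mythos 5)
Your proposal is correct and follows essentially the same route as the paper's proof: the same $\upsilon$-net reduction, the same insertion of the intermediate quantity $\hat F_n(x_l)$, the same union bound over the $n$ outer samples to control the shared inner-sample average via the vector version of Lemma~\ref{lem:ldb}, and the same application of the scalar version to $\hat F_n(x_l)-F(x_l)$. Your additional remarks on why the factor $nk$ is forced by the shared $\{\eta_j\}$ and on choosing $\eps_1$ as the minimum of the two thresholds are accurate elaborations of points the paper leaves implicit.
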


\begin{proof}
First, we pick a $\upsilon$-net $\{x_l\}_{l=1}^{Q}$ on the decision set $\mathcal{X}$, such that $L_f L_g \upsilon = \eps/4$. Using a similar argument in the proof of Theorem \ref{thm:saa}, we obtain 
\begin{equation}
\label{smc:Isep}
\begin{aligned}
    & \PP\bigg(\sup_{x\in \mathcal{X} } |\hat F_{nm}(x)- F(x)| > \eps\bigg)
     \leq \sum_{l=1}^{Q} \PP\bigg(|\hat F_{nm}(x_l)-F(x_l)| > \frac{\eps}{2}\bigg) \\
     \leq &\sum_{l=1}^{Q} \PP\bigg(|\hat F_{nm}(x_l)-\hat F_n (x_l)| >\frac{\eps}{4}\bigg)+ \sum_{l=1}^{Q} \PP\bigg(|\hat F_{n}(x_l)-F(x_l)| > \frac{\eps}{4}\bigg).
    \end{aligned}
\end{equation}
By Lipschitz continuity of $f_\xi(x)$ and Lemma~\ref{lem:ldb}, we have
\begin{equation}
\label{eq:fnm_fn}
\begin{aligned}
\PP\bigg(|\hat F_{nm}(x_l)-\hat F_n (x_l)|\geq \frac{\eps}{4}\bigg)
\leq & 
\sum_{i=1}^n \PP \bigg(\norm{\frac{1}{m}\sum_{j=1}^m g_{\eta_{j}}(x_l,\xi_i)-\EE_{\eta}g_{\eta}(x_l,\xi_i)}_2 \geq \frac{\eps}{4L_f}\bigg) \\
\leq &
2nk\exp\bigg(-\frac{m\eps^2}{16(\delta+2)L_f^2 \sigma_g^2}\bigg).
\end{aligned}
\end{equation}
By Lemma \ref{lem:ldb}, we obtain
\begin{equation}
\begin{aligned}
\PP\bigg(|\hat F_{n}(x_l)-F (x_l)|\geq \frac{\eps}{4}\bigg) 
& \leq 2\exp\bigg(-\frac{n\eps^2}{16(\delta+2)\sigma_f^2}\bigg).
\end{aligned}
\end{equation}
Combining with the fact that ${Q} \leq \cO(1){(\frac{4L_g L_f D_\mathcal{X}}{\eps})}^{d}$, we obtain the desired result. 
\end{proof}
Invoking the relation in (\ref{eq:decomposition}), the above theorem  implies the following:
\begin{cor}
\label{cor:isaa}
Under Assumption \ref{as:est}, with probability at least $1-\alpha$, the solution to the modified SAA problem~(\ref{eq:iSAA0}) is $\epsilon$-optimal to the original problem~(\ref{eq:imain2}) if the sample sizes $n$ and $m$ satisfy 
\begin{eqnarray*}
n &\geq& \frac{\cO(1)\sigma_f^2}{\eps^2}\bigg[d\log\rbr{\frac{8L_f L_g D_\mathcal{X}}{\eps}}+\log\rbr{\frac{1}{\alpha}}\bigg], \\
m&\geq& \frac{\cO(1)L_f^2\sigma_g^2}{\eps^2}\bigg[d\log\rbr{\frac{8L_f L_g D_\mathcal{X}}{\eps}}+\log\rbr{\frac{1}{\alpha}}+\log\rbr{nk}\bigg]. 
\end{eqnarray*}
Ignoring the log factors, under Assumption \ref{as:est}, the total sample complexity of the modified SAA for achieving an $\epsilon$-optimal solution is  $T=m+n = \cO(d/\eps^2)$.
\end{cor}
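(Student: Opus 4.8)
The plan is to combine the uniform convergence estimate of Theorem~\ref{thm:indep_uniform_convergence} with the error decomposition~\eqref{eq:decomposition} and then invert the resulting tail bound to extract sufficient sample sizes.

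First, since $\hat F_{nm}(\hat x_{nm}) - \hat F_{nm}(x^*) \le 0$, the decomposition~\eqref{eq:decomposition} gives
\[
\PP\big(F(\hat x_{nm}) - F(x^*) \ge \eps\big) \le \PP\big(F(\hat x_{nm}) - \hat F_{nm}(\hat x_{nm}) \ge \eps/2\big) + \PP\big(\hat F_{nm}(x^*) - F(x^*) \ge \eps/2\big),
\]
and each summand is at most $\PP\big(\sup_{x\in\mathcal{X}}|\hat F_{nm}(x) - F(x)| > \eps/2\big)$. Substituting $\eps/2$ for $\eps$ in Theorem~\ref{thm:indep_uniform_convergence} then yields, for $\eps$ below the relevant threshold,
\[
\PP\big(F(\hat x_{nm}) - F(x^*) \ge \eps\big) \le \cO(1)\Big(\tfrac{8L_fL_gD_{\mathcal{X}}}{\eps}\Big)^{d}\Big(\exp\big(-\tfrac{n\eps^2}{64(\delta+2)\sigma_f^2}\big) + nk\,\exp\big(-\tfrac{m\eps^2}{64(\delta+2)L_f^2\sigma_g^2}\big)\Big).
\]

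Next, I would force the right-hand side below $\alpha$ by requiring each of the two summands to be at most $\alpha/2$. Taking logarithms in $\cO(1)(8L_fL_gD_{\mathcal{X}}/\eps)^{d}\exp(-n\eps^2/(64(\delta+2)\sigma_f^2)) \le \alpha/2$ and solving for $n$ produces the stated condition $n \ge \cO(1)\sigma_f^2\eps^{-2}[d\log(8L_fL_gD_{\mathcal{X}}/\eps) + \log(1/\alpha)]$, after absorbing the $\cO(1)$ net-cardinality constant and the factor $64(\delta+2)$ into the leading $\cO(1)$. The analogous inequality for the second summand carries the extra multiplicative factor $nk$, whose logarithm enters additively, giving $m \ge \cO(1)L_f^2\sigma_g^2\eps^{-2}[d\log(8L_fL_gD_{\mathcal{X}}/\eps) + \log(1/\alpha) + \log(nk)]$. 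There is no circularity here: $n$ is pinned down first from its own condition, and then $\log(nk)$ is a fixed constant when choosing $m$.

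Finally, for the $T = \cO(d/\eps^2)$ claim I would observe that with $n$ of order $d\eps^{-2}$ up to logarithms, $\log(nk)$ is only $\log(d/\eps^2) + \log k + \cO(1)$, i.e., logarithmic; hence $m$ is also $\cO(d/\eps^2)$ ignoring log factors, and $T = m + n = \cO(d/\eps^2)$. This corollary is essentially bookkeeping once Theorem~\ref{thm:indep_uniform_convergence} is available, so I do not expect a genuine obstacle; the only points requiring care are confirming that the additive $\log(nk)$ term does not inflate the polynomial order in $1/\eps$ and checking that $\eps$ lies below the threshold $\eps_1$ supplied by Theorem~\ref{thm:indep_uniform_convergence}.
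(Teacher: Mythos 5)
Your proposal is correct and follows essentially the same route as the paper: the paper derives this corollary by invoking the decomposition~(\ref{eq:decomposition}) together with Theorem~\ref{thm:indep_uniform_convergence} at accuracy $\eps/2$, and then inverting the two exponential tails exactly as you do. Your explicit handling of the $\log(nk)$ term (fixing $n$ first, then choosing $m$) and the check that it stays logarithmic are the right bookkeeping details.
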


Note that this sample complexity is significantly smaller than that for the general CSO. The $\cO(d/\eps^2)$ sample complexity also matches the lower bounds on sample complexity of SAA for classical stochastic optimization with Lipschitz continuous objectives~\cite{massart2006risk}; therefore, this result is  unimprovable without further assumptions.    

\subsection{Sample Complexity Under Error Bound Conditions}
We now consider the case when the empirical objective satisfies Assumption~\ref{as:fpl} and~\ref{as:uniquepro}, i.e., the empirical objective $\hat F_{nm}(x)$ satisfies the error bound condition and has a unique minimizer for any integers $n,m$. Our main result is summarized as follows.
\begin{thm} 
\label{thm:iscsaa}{}
Under Assumptions \ref{as:est}, \ref{as:fpl}, and \ref{as:uniquepro}, for any $\eps >0$ and $\upsilon>0$, we have
\begin{equation}
\label{eq:convergence_iscsaa}
\begin{aligned}
    &\PP (F(\hat x_{nm}) - F(x^*) \geq \epsilon )\\
    \leq &\frac{1}{\eps}\bigg(
    L_f L_g\bigg(\frac{2L_f L_g}{\mu n}\bigg)^{1/\delta}+
    \cO(1)\frac{L_fM_g\sqrt{d\log(D_\mathcal{X}/\upsilon)}}{\sqrt{m}} +\frac{L_f\sigma_g}{\sqrt{m}}
    + 2\upsilon L_f L_g\bigg).
    \end{aligned}
\end{equation}
The solution to the modified SAA problem~(\ref{eq:iSAA0}) is $\epsilon$-optimal to the  problem~(\ref{eq:imain2}) with probability at least $1-\alpha$, if 
$\upsilon = \frac{\eps \alpha}{12L_fL_g}$, and 
the sample sizes $n$ and $m$ satisfy that
\begin{equation}
\label{eq:isamplecomplexity}
    n \geq \frac{(2L_fL_g)^{\delta+1}}{\mu(\alpha\eps)^\delta},\;
    m \geq \max\left\{\left(\frac{12L_f\sigma_g}{\alpha\eps}\right)^2,\cO(1)\left(\frac{6L_fM_g}{\alpha\eps}\right)^2d\log\left(\frac{12D_\mathcal{X}L_fL_g}{\alpha\eps}\right) \right\}.
\end{equation}
\end{thm}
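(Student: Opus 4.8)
The plan is to follow the three-term split and the leave-one-out stability argument from the proof of Theorem~\ref{thm:SAA_error_bound}, but to replace the single appeal to Lemma~\ref{lm:bias} by a \emph{uniform-in-$x$} control of the inner Monte Carlo error; the latter is forced because, in the independent sampling scheme, the leave-one-out minimizer still depends on the shared inner samples $\{\eta_j\}_{j=1}^m$.

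First I would write $F(\hat x_{nm}) - F(x^*) = A + B + C$, with $A := F(\hat x_{nm}) - \hat F_{nm}(\hat x_{nm})$, $B := \hat F_{nm}(\hat x_{nm}) - \hat F_{nm}(x^*)$, and $C := \hat F_{nm}(x^*) - F(x^*)$. Here $B \le 0$ by optimality of $\hat x_{nm}$, and since $x^*$ is deterministic, hence independent of $\{\eta_j\}_{j=1}^m$, taking expectations and using the $L_f$-Lipschitz continuity of $f_\xi$, Jensen's inequality and Assumption~\ref{as:est}(e) gives $\EE C \le L_f\sigma_g/\sqrt m$, exactly as in \eqref{eq:lipschitz_bias}. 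For $\EE A$ I would introduce the leave-one-out empirical function $\hat F_{nm}^{(k)}$ obtained by replacing only the $k$-th outer sample $\xi_k$ with an i.i.d.\ copy $\xi_k'$ (keeping $\{\eta_j\}_{j=1}^m$ fixed), with minimizer $\hat x_{nm}^{(k)}$, and split $\EE A$ into the three pieces of \eqref{eq:decompose_term_1}. The first piece equals $0$ because $\hat x_{nm}^{(k)}$ is independent of $\xi_k$ while $\hat x_{nm}^{(k)}$ and $\hat x_{nm}$ are identically distributed. The third piece is the stability piece: rewriting $\hat F_{nm}(\hat x_{nm}^{(k)}) - \hat F_{nm}(\hat x_{nm})$ as in \eqref{eq:sca1}, with $\{\eta_j\}_{j=1}^m$ in place of the block-indexed samples, and combining $\hat F_{nm}^{(k)}(\hat x_{nm}^{(k)}) - \hat F_{nm}^{(k)}(\hat x_{nm}) \le 0$ with the Lipschitz continuity of $f_\xi, g_\eta$ and the $(\mu,\delta)$-error bound condition on $\hat F_{nm}$, yields $\|\hat x_{nm}^{(k)} - \hat x_{nm}\|_2 \le (2L_fL_g/\mu n)^{1/\delta}$ as in \eqref{eq:strong2}, hence a contribution of $L_fL_g(2L_fL_g/\mu n)^{1/\delta}$.

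The genuinely new step is the second piece, namely bounding $\EE\big[f_{\xi_k}(\EE_\eta g_\eta(\hat x_{nm}^{(k)},\xi_k)) - f_{\xi_k}(\tfrac1m\sum_j g_{\eta_j}(\hat x_{nm}^{(k)},\xi_k))\big]$: since $\hat x_{nm}^{(k)}$ depends on $\{\eta_j\}_{j=1}^m$, Lemma~\ref{lm:bias} no longer applies, so I would instead bound it by $L_f\,\EE\big[\sup_{x\in\mathcal{X}}\|\EE_\eta g_\eta(x,\xi_k) - \tfrac1m\sum_j g_{\eta_j}(x,\xi_k)\|_2\big]$, condition on $\xi_k$, and handle the expected supremum by a $\upsilon$-net argument. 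Both $x \mapsto \EE_\eta g_\eta(x,\xi_k)$ and $x \mapsto \tfrac1m\sum_j g_{\eta_j}(x,\xi_k)$ are $L_g$-Lipschitz, so the supremum is at most the maximum deviation over a $\upsilon$-net of $\mathcal{X}$ (of size $Q \le \cO((D_\mathcal{X}/\upsilon)^d)$) plus $2L_g\upsilon$; at each net point the deviation is an average of $m$ i.i.d.\ zero-mean vectors of norm at most $2M_g$, and a standard maximal inequality over the $Q$ points gives $\EE[\max_l\|\cdot\|_2] \le \sigma_g/\sqrt m + \cO(1)M_g\sqrt{d\log(D_\mathcal{X}/\upsilon)/m}$; using $\sigma_g \le M_g$ to absorb the first summand, this piece is at most $\cO(1)L_fM_g\sqrt{d\log(D_\mathcal{X}/\upsilon)/m} + 2\upsilon L_fL_g$. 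Collecting these bounds for $A$ with $B \le 0$ and the bound on $\EE C$, and applying Markov's inequality, yields \eqref{eq:convergence_iscsaa}. The sample-complexity claim then follows by plugging in $\upsilon = \eps\alpha/(12L_fL_g)$, so that $2\upsilon L_fL_g = \alpha\eps/6$ and $D_\mathcal{X}/\upsilon = 12D_\mathcal{X}L_fL_g/\alpha\eps$, together with the lower bounds in \eqref{eq:isamplecomplexity}, which are exactly what is needed to force the stability term and the two $1/\sqrt m$ terms each to be a fixed fraction of $\alpha\eps$, so that the right-hand side of \eqref{eq:convergence_iscsaa} is at most $\alpha$.

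The main obstacle is this uniform control of the inner error. In the conditional-sampling analysis of Theorem~\ref{thm:SAA_error_bound} the perturbed minimizer could be made independent of the corresponding inner samples, so the inner error collapsed to the dimension-free bias bound of Lemma~\ref{lm:bias}; with shared inner samples one is forced to union-bound over a net of $\mathcal{X}$, which is precisely why the dimension $d$ and the discretization parameter $\upsilon$ reappear in \eqref{eq:convergence_iscsaa}---the price of using the independent sampling scheme under an error-bound condition. Obtaining the maximal inequality over the net with the sharp $M_g\sqrt{d\log(D_\mathcal{X}/\upsilon)/m}$ scaling, rather than a cruder cardinality- or dimension-loose estimate, is the technical crux.
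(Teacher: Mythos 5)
Your proposal is correct and follows essentially the same route as the paper: the same three-term decomposition, the same leave-one-out stability argument with the shared inner samples $\{\eta_j\}_{j=1}^m$, and the same key realization that the second piece of $\EE[F(\hat x_{nm})-\hat F_{nm}(\hat x_{nm})]$ must be controlled uniformly over a $\upsilon$-net of $\mathcal{X}$ because $\hat x_{nm}^{(k)}$ depends on the inner samples. The only (cosmetic) difference is that you push the $L_f$-Lipschitz bound inside first and apply a maximal inequality to the norms of the centered inner averages at the net points, whereas the paper works directly with the scalar $H_k(x_l)$ via McDiarmid's bounded-difference inequality and an exponential-moment soft-max bound; both yield the same $\cO\bigl(L_fM_g\sqrt{d\log(D_\mathcal{X}/\upsilon)/m}\bigr)+2\upsilon L_fL_g$ contribution.
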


Similar to Theorem~\ref{thm:SAA_error_bound}, the outer sample size is independent of dimension and decreases as $\delta$ decreases. As $\delta$ gets closer to zero, the sample complexity will essentially be dominated by the inner sample size. In particular, when the empirical function satisfies the QG condition or is strongly convex, i.e., Assumption~\ref{as:fpl} holds with $\delta=1$, the outer sample size is reduced from $\cO(d/\eps^{2})$ in the Lipschitz continuous case to $\cO(1/\eps)$. Yet, the total sample complexity remains $\cO(d/\eps^{2})$. 

For a CSO problem with independent random vectors (\ref{eq:imain2}), both SAA approaches, through conditional sampling, or independent sampling, can be applied to solve the problem. Comparing Theorem~\ref{thm:SAA_error_bound} and Theorem~\ref{thm:iscsaa}, when smoothness and the quadratic growth condition are satisfied,  the sample complexities of these two SAA approaches achieve the same order $\cO(1/\epsilon^2)$, except for an extra $O(d)$ factor for the independent sampling. Interestingly, for a given small dimension $d$ and the same sample budget $T$, the independent sampling might outperform the conditional sampling scheme since the constant factor in the sample complexity of conditional sampling is much larger. The numerical experiment on our testing cases  in the next section further supports the finding. 

In contrast to the sample complexity established in Section~\ref{sec:sca} for the conditional sampling setting, a notable difference here is that the Lipschitz smoothness condition does not necessarily help reduce the sample complexity. This result  aligns with the central limit theorem established in~\cite{dentcheva2017statistical}. One of the reasons arises from the interdependence among the component functions in the modified SAA objective, leading to  extra variance. Because of that, the analysis  requires sophisticated arguments to handle the dependence and is much more involved . We defer the proof to Appendix Section~\ref{sec:appendix-ind}.

\begin{remark} Although the overall $\cO(1/\eps^2)$ sample complexity cannot be further improved in general, it is worth pointing out that, for some interesting specific instances, the modified SAA could achieve lower sample complexity than what is described from theory. We illustrate this from the following example. 
\end{remark}

\paragraph{Example 3}
For $\gamma >0$, consider the following problem
\begin{equation*}
\min_{x\in\mathcal{X}} F(x):=  H(\EE_\eta [x+\eta],\gamma)+(\EE_\eta [x+\eta])^2,
\end{equation*}
where $\eta\sim\cN(0,\sigma_\eta^2)$ and $H(\cdot,\gamma)$ is the Huber function, i.e., 
\begin{equation}
H(x,\gamma) = 
\left\{
\begin{aligned}
& |x|-\frac{1}{2}\gamma && \text{for }{|x|>\gamma}.\\
&\frac{1}{2\gamma}x^2 &&  \text{for }{|x|\leq\gamma}.
\end{aligned}
\right.
\end{equation}
Note that here $f_\xi(x) :=f(x)= H(x,\gamma)+x^2$ is deterministic, and $g_\eta(x,\xi) = x+\eta$. When $\gamma >0$, $f(x)$ is $1/\gamma$-Lipschitz smooth. When $\gamma\rightarrow 0$, $f(x) \rightarrow |x|+x^2$, which is no longer differentiable. In this  example, $x^* = \argmin_{x\in\mathcal{X}}F(x) = -\EE \eta$, $F^* = \min_{x\in\mathcal{X}}F(x) =  0$. The empirical objective becomes
$
\hat F_{m}(x) = H(x+\bar \eta, \gamma)+(x+\bar \eta)^2,
$
where $\bar \eta =  \frac{1}{m}\sum_{j=1}^m \eta_j$. Thus, $\hat x_{m} =\argmin_{x\in\mathcal{X}}\hat F_{m}(x) =  -\bar \eta$. We show that  the error of SAA satisfies
\begin{equation}
\label{example3}
0 \leq \EE F(\hat x_{m}) - F(x^*) - \bigg(\frac{\sigma_\eta^2}{2\gamma m} \text{erf}\bigg(\sqrt{\frac{\gamma^2 m}{2\sigma_\eta^2}}\bigg)+\frac{\sigma_\eta^2}{m}\bigg) \leq  \sqrt{\frac{\sigma_\eta^2}{2\pi m}}\exp\bigg(-\frac{m\gamma^2}{2\sigma_\eta^2}\bigg),
\end{equation}
where erf$(x) := \frac{2}{\sqrt{\pi}}\int_{0}^x \exp(-x^2)dx$. As a result, when $\gamma\rightarrow 0$,
\begin{equation}
\label{example3_1}
\lim_{\gamma\to 0}\EE F(\hat x_{m}) - F(x^*) = \sqrt{\frac{\sigma_\eta^2}{2\pi m}}+\frac{\sigma_\eta^2}{m}.
\end{equation}
For completeness, we provide detailed derivation in Appendix Section~\ref{sec:appendix-ex}. This example shows that the SAA error improves from $\cO(1/\sqrt{m})$ to $\cO(1/m)$ as the objective transits from nonsmooth to smooth. When $\gamma\to 0$, the function becomes non-Lipschitz differentiable and the $O(1/\sqrt{m})$ bound for this setting is indeed tight. It remains an interesting open problem to identify sufficient conditions for achieving theoretically better sample complexity under the independent sampling scheme. 

\section{Numerical Experiments}
\label{sec:num}

In this section, we conduct numerical experiments based on two applications, logistic regression and robust regression, to demonstrate the performance of SAA for solving CSO problems. For a fixed sample budget $T$, we adopt difference sample allocation strategies for $(m,n)$, and compute the corresponding accuracy of the SAA estimators. We repeat 30 runs for each sample allocation  and report the average performance.  The SAA problems are solved by CVXPY 1.0.9 ~\cite{cvxpy}.  
\subsection{Robust Logistic Regression}
We consider the robust logistic regression problem in Example 2.  The problem is formulated in (\ref{eq:logistic}) and its SAA counterpart is of the form (\ref{eq:logisticSAA}) with domain $\mathcal{X} = \{x|x\in\RR^d, \|x\|_2\leq 100\}$. 

Note that from Example 2, $f$ is Lipschitz-smooth, $\hat F_{nm}(x)$ satisfies QG condition on any compact convex set, and with high probability has a  unique minimizer for large $n$. Theorem \ref{thm:SAA_error_bound} implies that the theoretical optimal sample allocation strategy is $n = \cO(1/\sqrt{T})$ and $m =\cO(1/\sqrt{T})$.

In the experiment, we set $d = 10$ and the samples of $\xi =(a, b)$ and $\eta$ are generated as follows: $a_i \sim \cN(0,\sigma_\xi^2I_{d})$, $b_i = \{\pm 1\}$ according to the sign of $a_i^Tx^*$, $\eta_{ij}\sim \cN(a_i,\sigma_\eta^2I_{d})$. We set $\sigma_\xi^2 = 1$, and consider three cases for $\sigma_\eta$: $\sigma_\eta^2=\{0.1, 10, 100\}$, corresponding low, medium, high variances from inner randomness.  For a given sample budget $T$ ranging from $10^3$ to $10^6$, four different sample allocation strategies are considered, i.e. $n=[T^{1/4}]$, $n=[T^{1/3}]$, $n=[T^{1/2}]$, and $n=[T^{2/3}]$. We then compute the average estimation error $F(\hat{x}_{nm})-F^*$ over 30 runs and its standard deviation. The results are summarized in Figure~\ref{Fig:logistic}, where $x$-axis denotes the sample budget $T$, 
and $y$-axis shows the estimation error. Each curve represents a sampling scheme,
showing the average error and upper confidence bound.  

\begin{figure}[t]
\label{Fig:logistic}
\centering
\begin{minipage}{.32\textwidth}
\subfigure[$\sigma_\eta^2/\sigma_\xi^2=0.1$]
{\includegraphics[width=\textwidth]{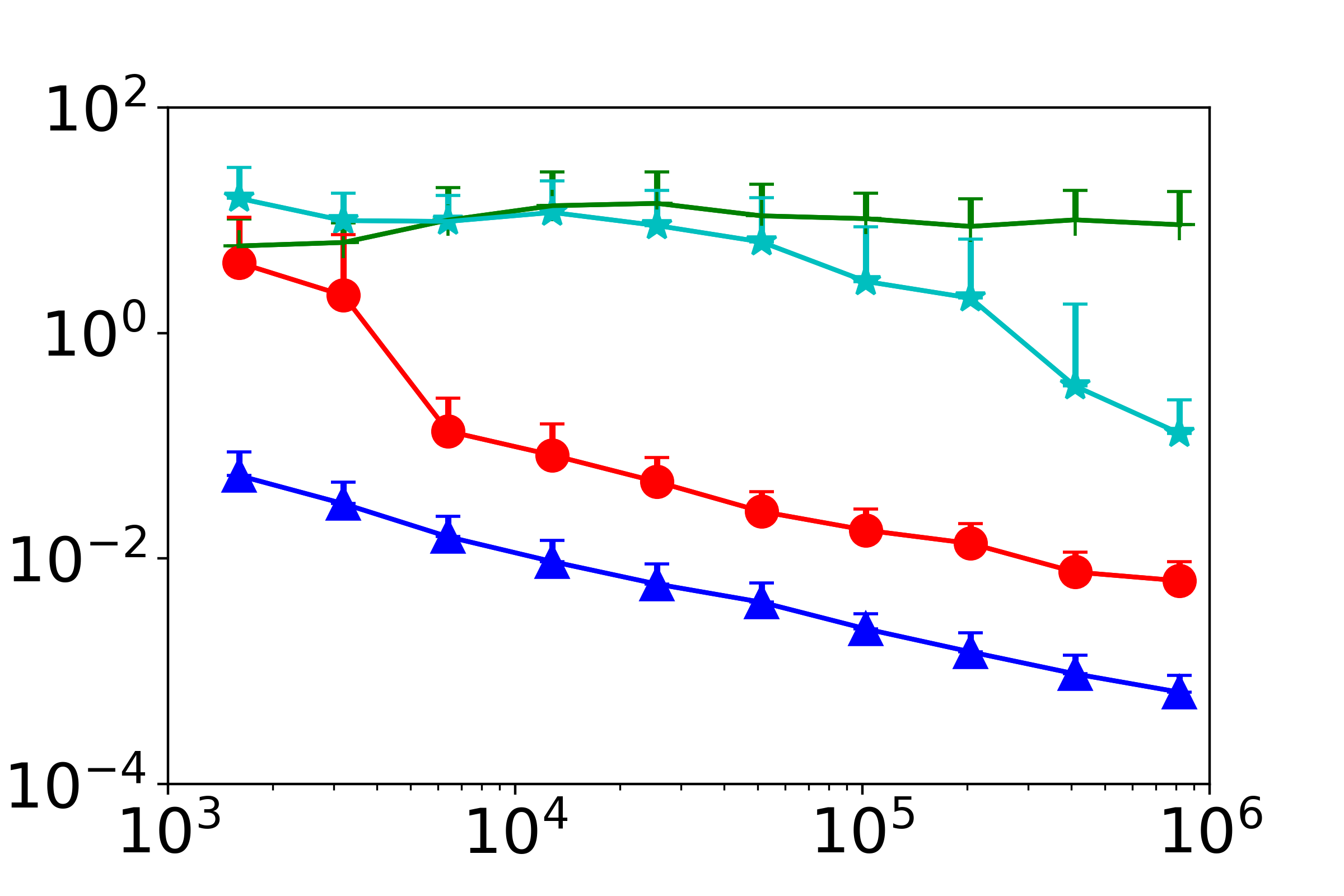}}
\end{minipage}%
\begin{minipage}{.32\textwidth}
\subfigure[$\sigma_\eta^2/\sigma_\xi^2=10$]
{\includegraphics[width=\textwidth]{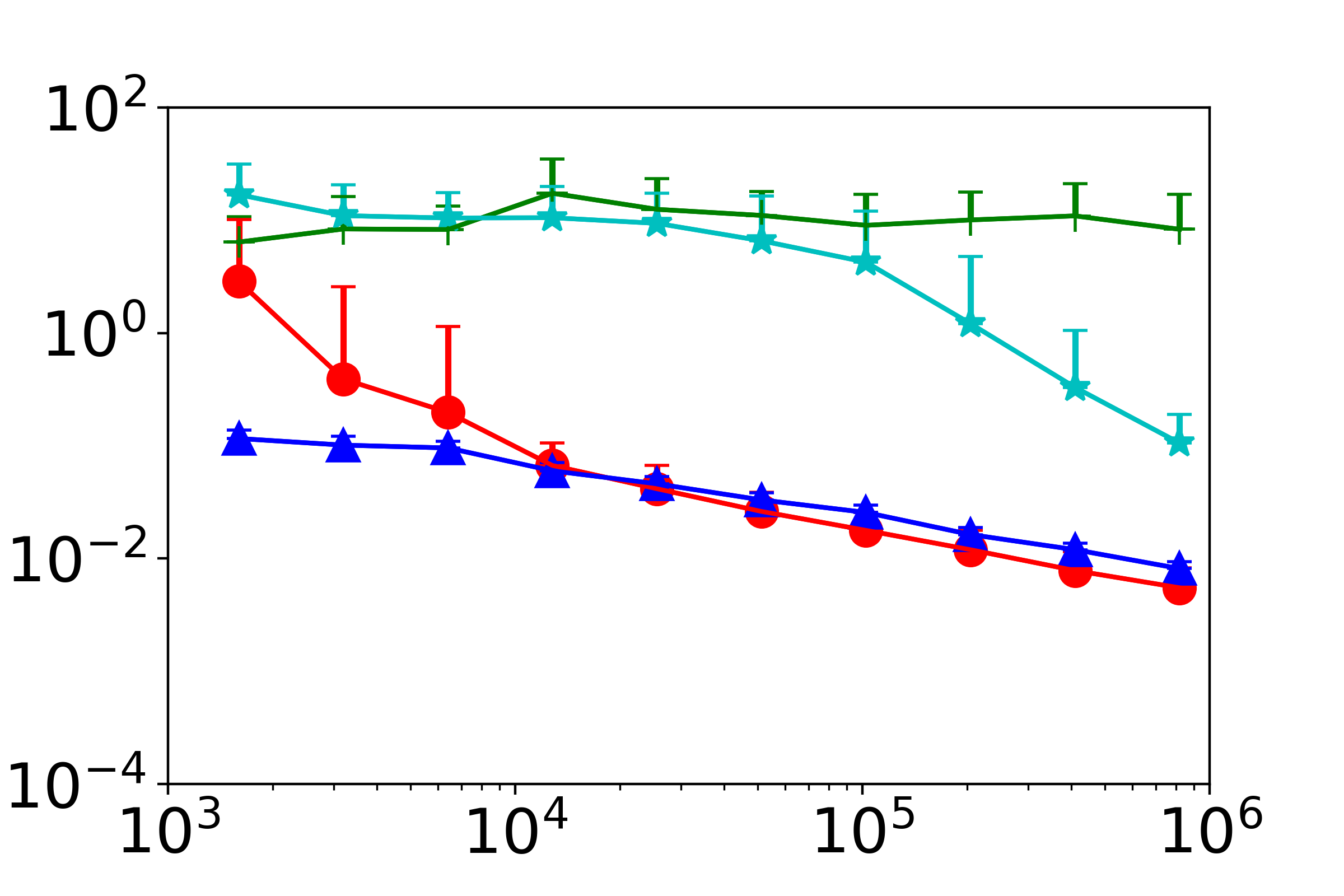}}
\end{minipage}
\begin{minipage}{.32\textwidth}
\subfigure[$\sigma_\eta^2/\sigma_\xi^2=100$]
{\includegraphics[width=\textwidth]{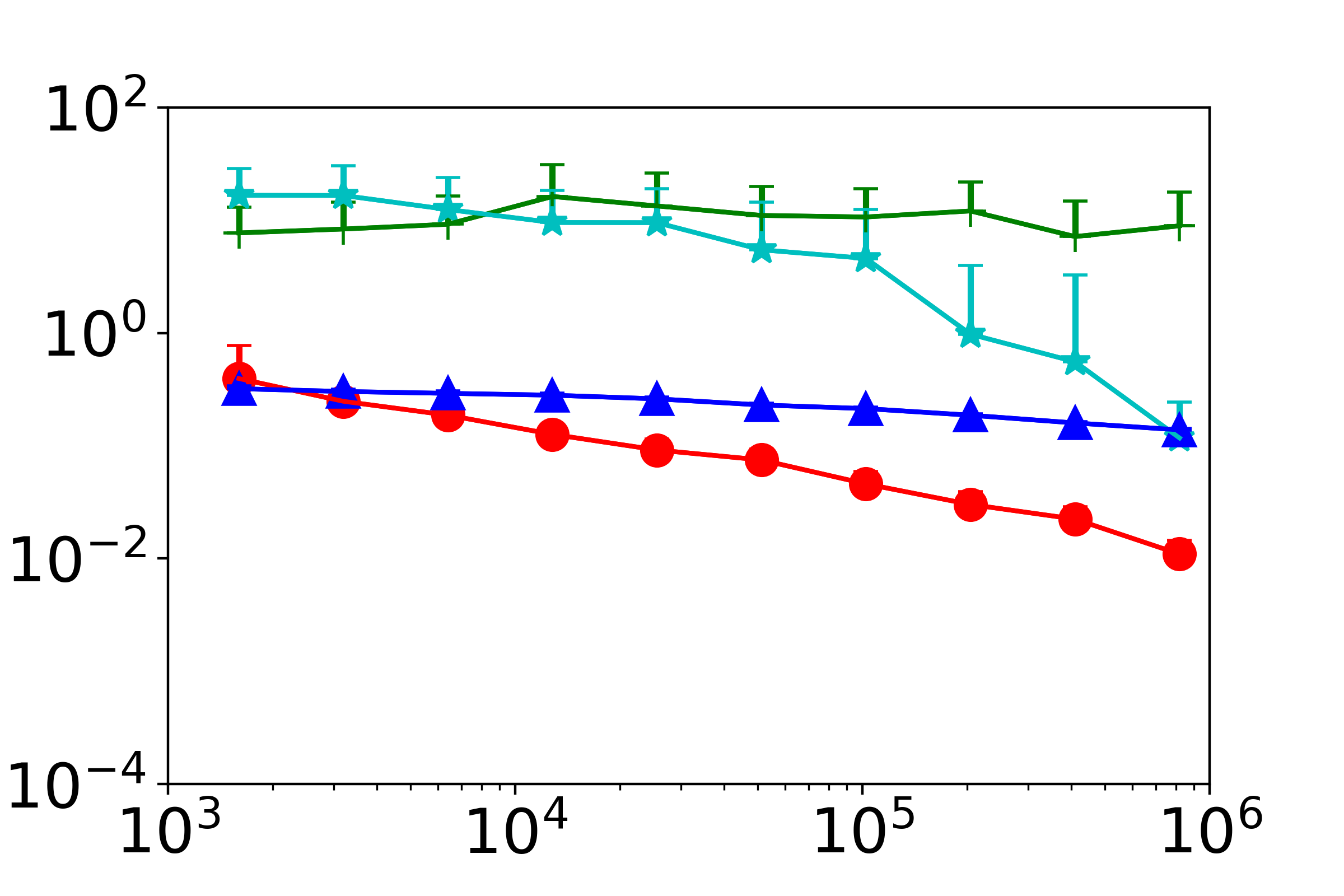}}
\end{minipage}%

\includegraphics[width=0.8\textwidth]{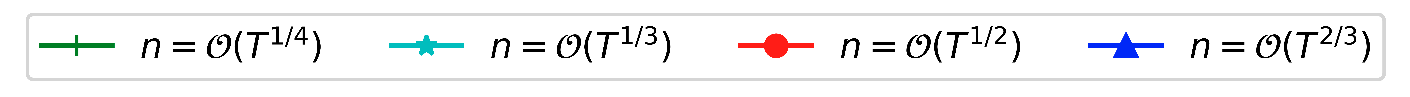}

\caption{Logistic regression, conditional sampling, dimension $d=10$}
\end{figure}
 
The trend from Figure~\ref{Fig:logistic}(a)-(c) shows that when the inner variance is relatively large, setting $n=\cO(T^{1/2})$ consistently outperforms the other sampling strategies, which matches our analysis. The error bar suggests that larger number of outer samples results in smaller deviation of the estimation accuracy.

\subsection{Robust Regression}
We now examine the robust regression problem, where the objective is no longer Lipschitz differentiable. The problem is as follows:
\begin{equation}
\min_{x\in\mathcal{X}}\; F(x) = \EE_{\xi=(a,b)}|\EE_{\eta|\xi} \eta^\top x - b|,
\end{equation}
where $a\in\RR^d$ is a random feature vector and $b\in \RR$ is the label, $\eta=a+\cN(0,\sigma_\eta^2 I_d)$ is a perturbed noisy observation of the input feature vector $a$, and the domain is $\mathcal{X} = \{x|x\in\RR^d, \|x\|_2\leq 100\}$.   For comparison purposes, we also consider the smoothed version of this problem based on the Huber function:
\begin{equation*}
\min_{x\in\mathcal{X}} \; F^\gamma(x)=\EE_{\xi=(a,b)}H\left(\EE_{\eta\given\xi}\eta^\top x-b ,\gamma \right),
\end{equation*}
where $\gamma>0$ is the smoothness parameter. 
The empirical functions for these two objectives are given by
\begin{equation*}
\hat F_{nm}(x) = \frac{1}{n}\sum_{i=1}^n \bigg|\frac{1}{m}\sum_{i=1}^m \eta_{ij}^\top x-b_i \bigg|, \quad
\hat F^\gamma_{nm}(x) = \frac{1}{n}\sum_{i=1}^n H\bigg(\frac{1}{m}\sum_{i=1}^m \eta_{ij}^\top x-b_i ,\gamma \bigg).
\end{equation*}
Theorem~\ref{thm:saa} and Theorem~\ref{thm:SAA_error_bound} indicate that Lipschitz smoothness of outer function $f_\xi(x)$ helps reduce the inner sample size required to achieve the same level of accuracy. For a given budget $T$, the theoretical optimal sample allocation strategies for these two problems  is $n = \cO(T^{1/2})$ and $n =\cO(T^{2/3})$, respectively.

In our experiment, we set $d=20$. Samples of $\xi=(a,b)$ and $\eta$ are generated as follows: $a_i \sim \cN(0,\sigma_\xi^2I_{d})$, $b_i = a_i^\top x^*$,  $\eta_{ij} \sim \cN(a_i,\sigma_\eta^2 I_{d})$. As in the previous experiment, we measure the average error and upper confidence bound for both problems  with sample budget $T$ ranging from $10^3$ to $10^6$ under four different sample allocation strategies over 30 runs. We also consider two sets of smoothness parameters,  $\gamma \in\{0.1, 10\}$. The results are summarized in  Figure~\ref{fig:dependent_absolute}. 

\begin{figure}[ht] 
\label{fig:dependent_absolute}
\centering
\begin{minipage}{.33\textwidth}
\subfigure[$\sigma_\eta^2/\sigma_\xi^2=0.1$, absolute value]{
\includegraphics[width=.95\textwidth]{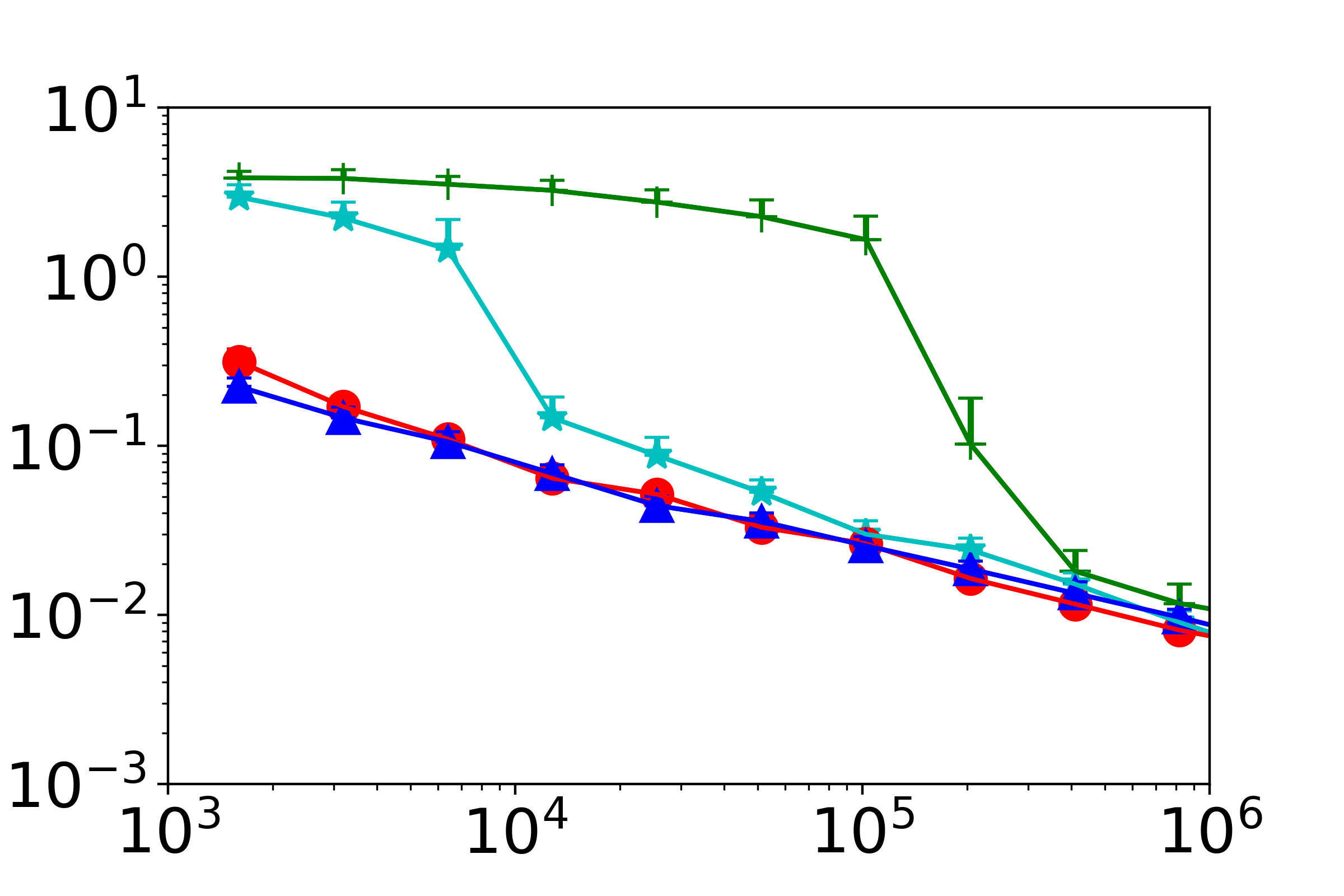}}
\end{minipage}%
\begin{minipage}{.33\textwidth}
\subfigure[$\sigma_\eta^2/\sigma_\xi^2=10$, absolute value]{
\includegraphics[width=.95\textwidth]{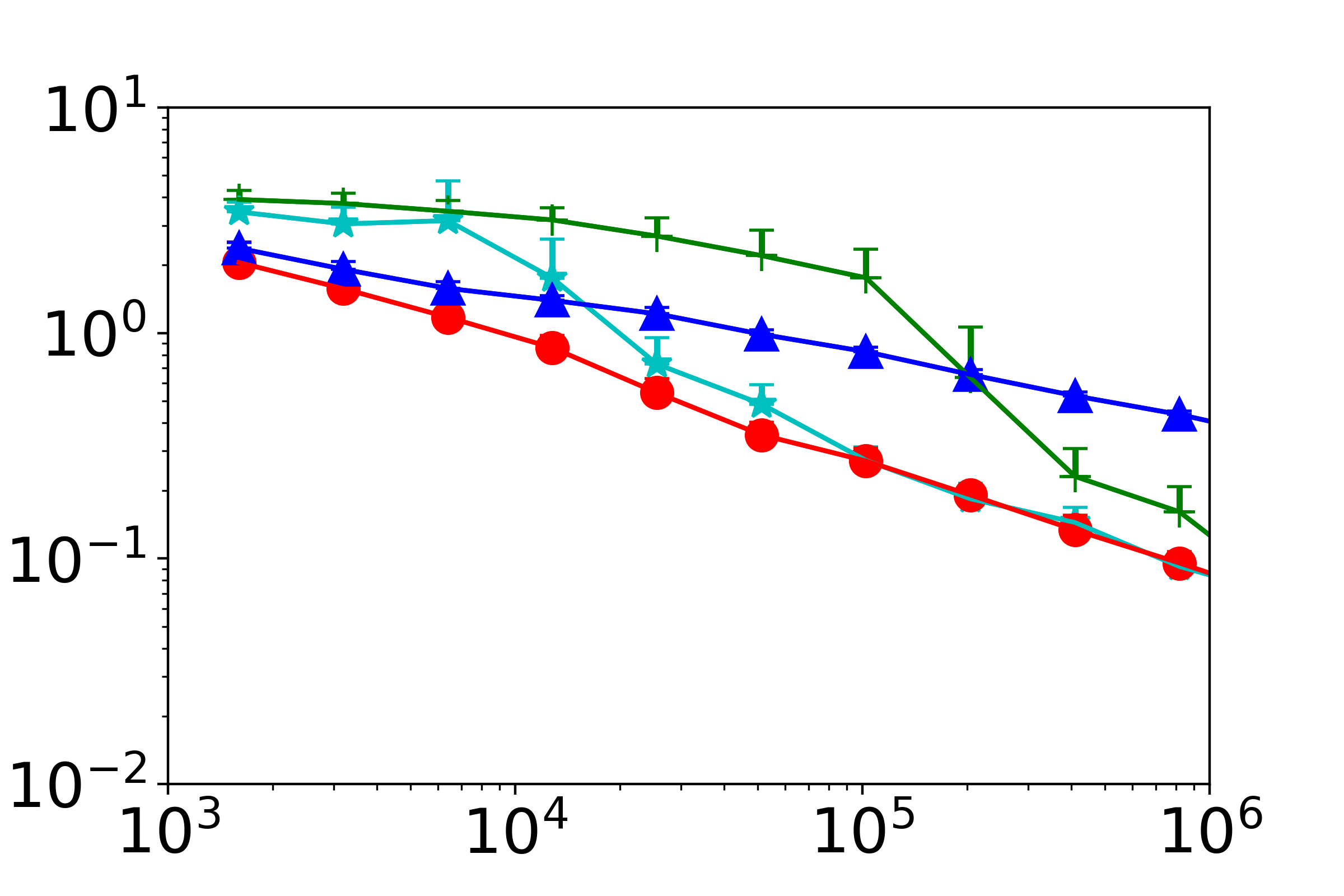}}
\end{minipage}
\begin{minipage}{.33\textwidth}
\subfigure[$\sigma_\eta^2/\sigma_\xi^2=100$, absolute value]{
\includegraphics[width=.95\textwidth]{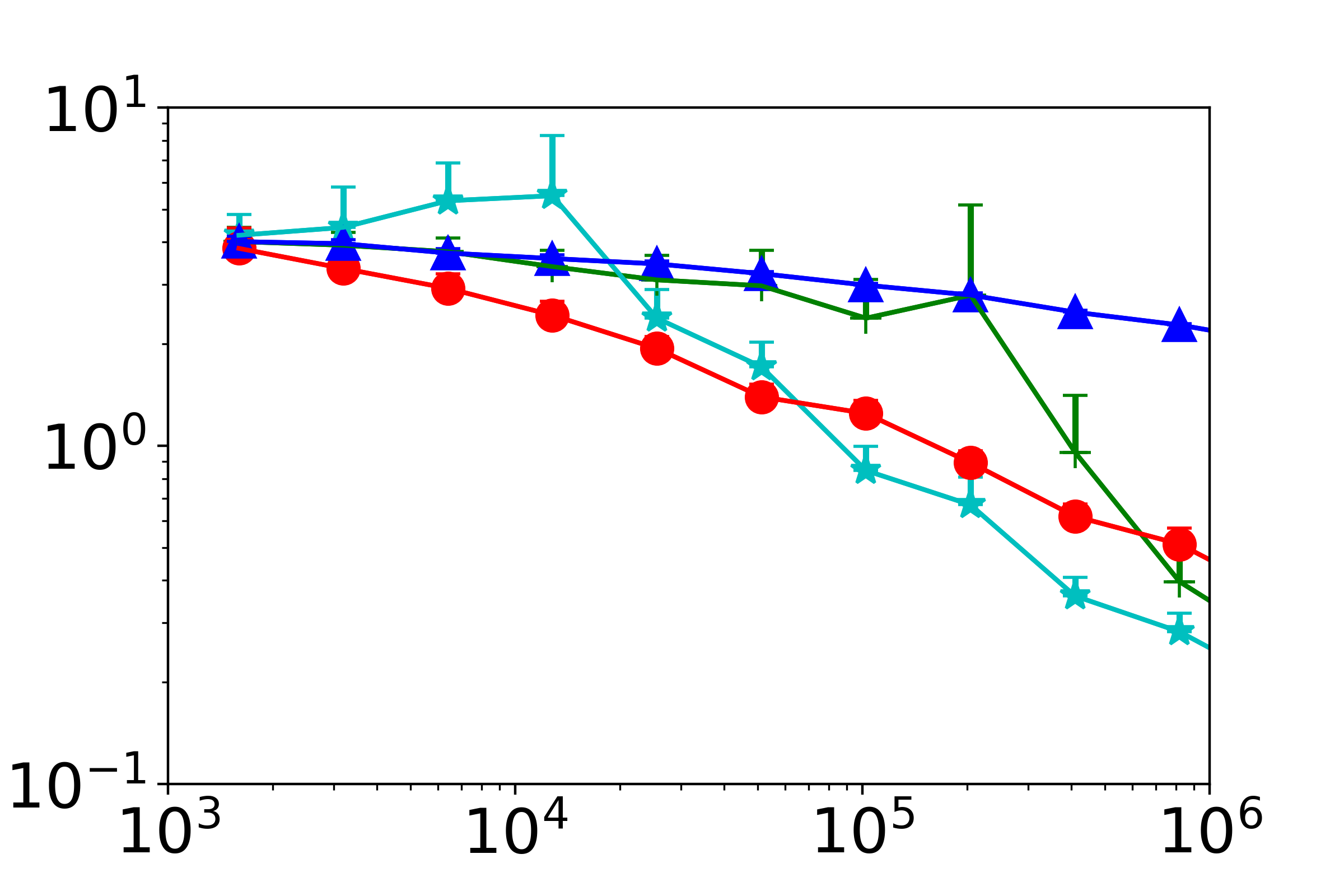}}
\end{minipage}%

\begin{minipage}{.33\textwidth}
\subfigure[$\sigma_\eta^2/\sigma_\xi^2=0.1$, Huber, $\gamma=0.1$]{
\includegraphics[width=.95\textwidth]{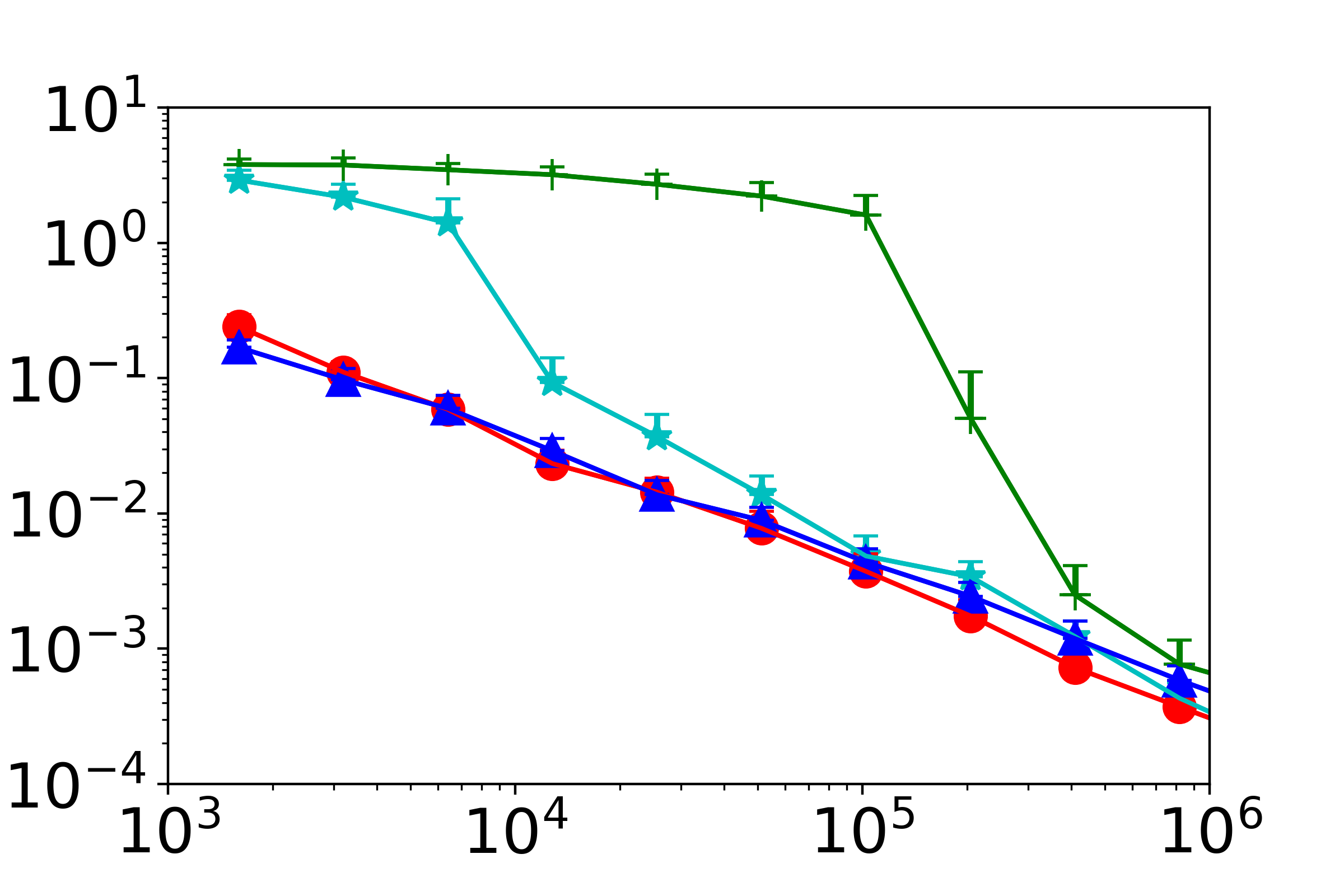}}
\end{minipage}%
\begin{minipage}{.33\textwidth}
\subfigure[$\sigma_\eta^2/\sigma_\xi^2=10$, Huber, $\gamma=0.1$]{
\includegraphics[width=.95\textwidth]{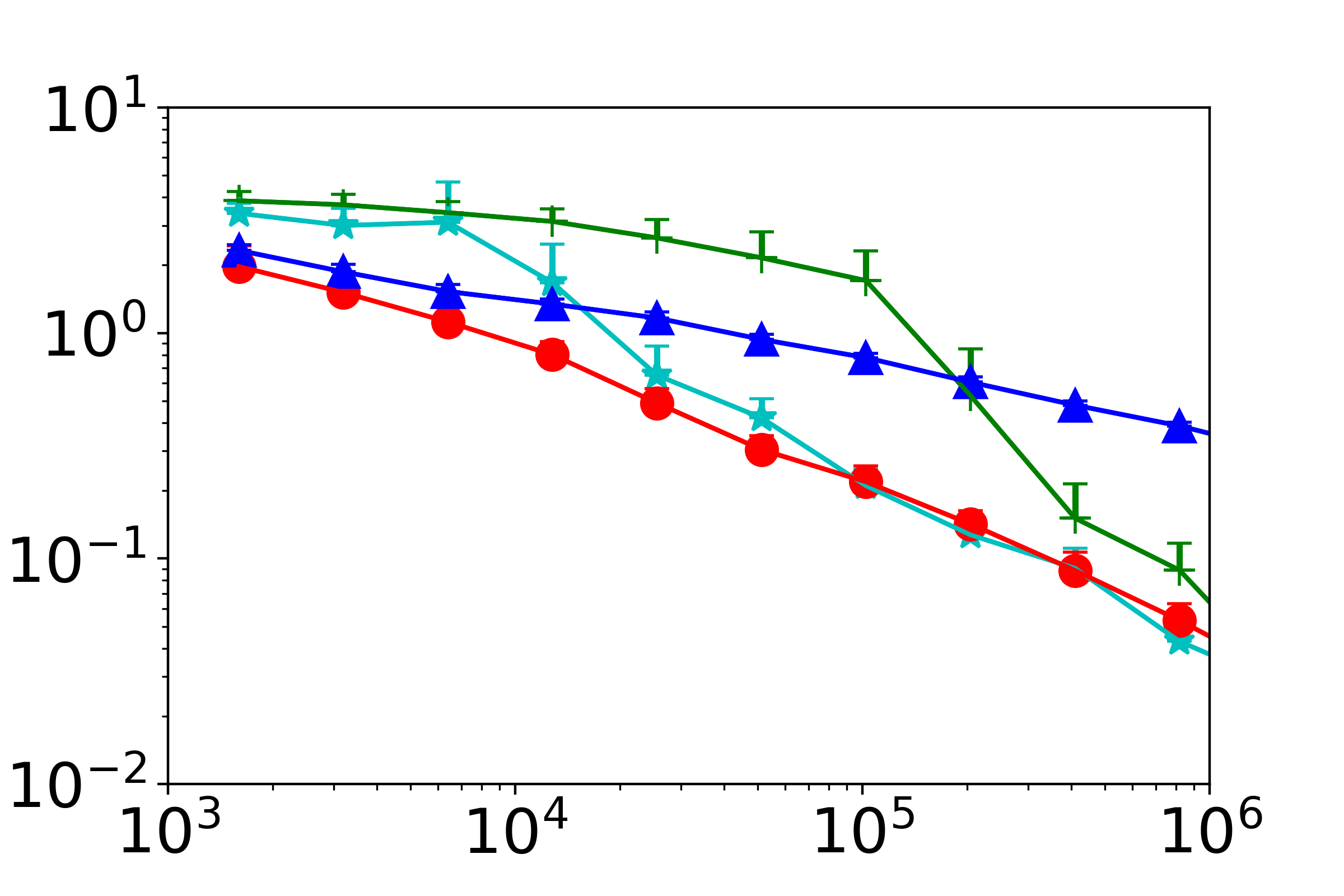}}
\end{minipage}
\begin{minipage}{.33\textwidth}
\subfigure[$\sigma_\eta^2/\sigma_\xi^2=100$, Huber, $\gamma=0.1$]{
\includegraphics[width=.95\textwidth]{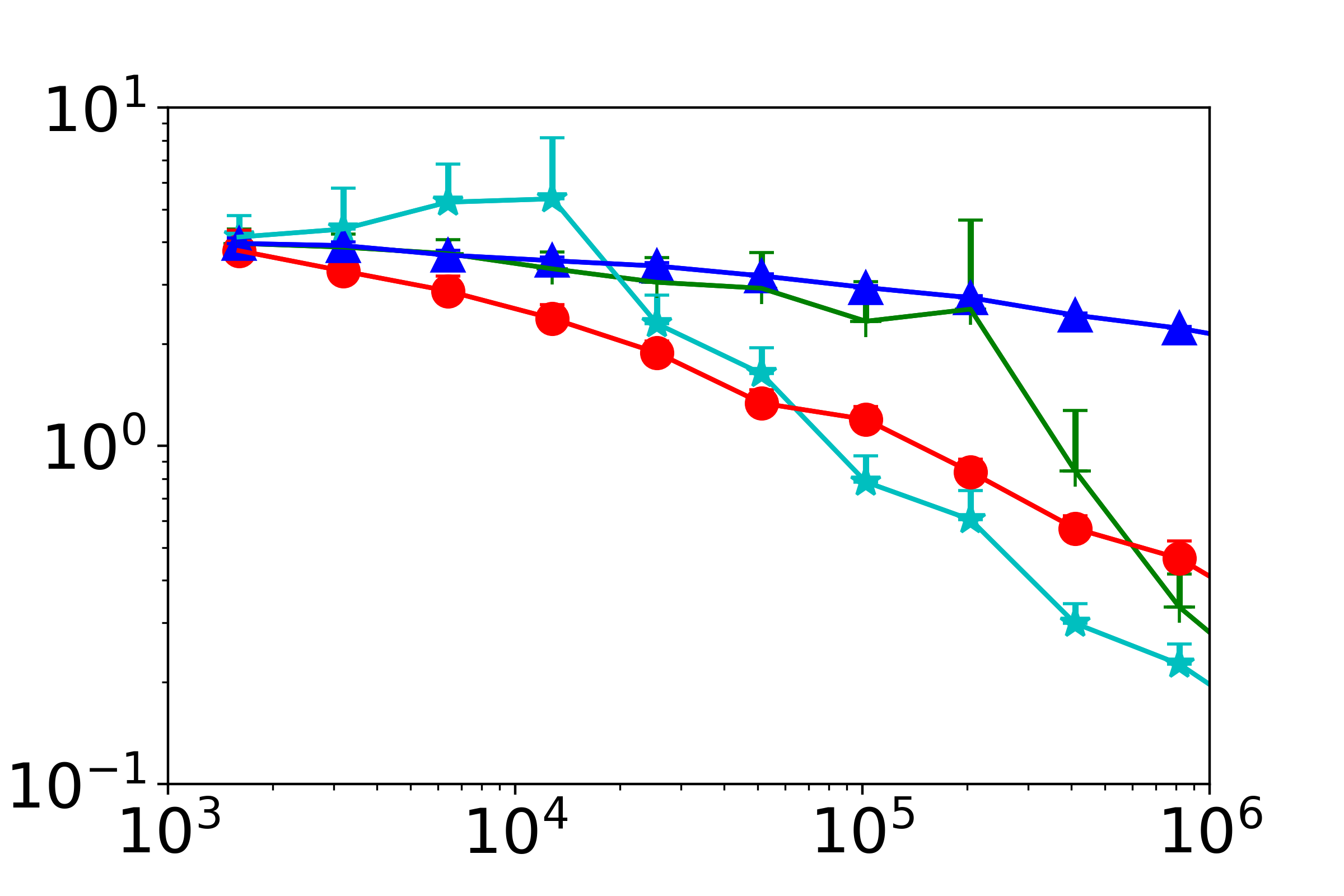}}
\end{minipage}%

\begin{minipage}{.33\textwidth}
\subfigure[$\sigma_\eta^2/\sigma_\xi^2=0.1$, Huber, $\gamma=10$]{
\includegraphics[width=.95\textwidth]{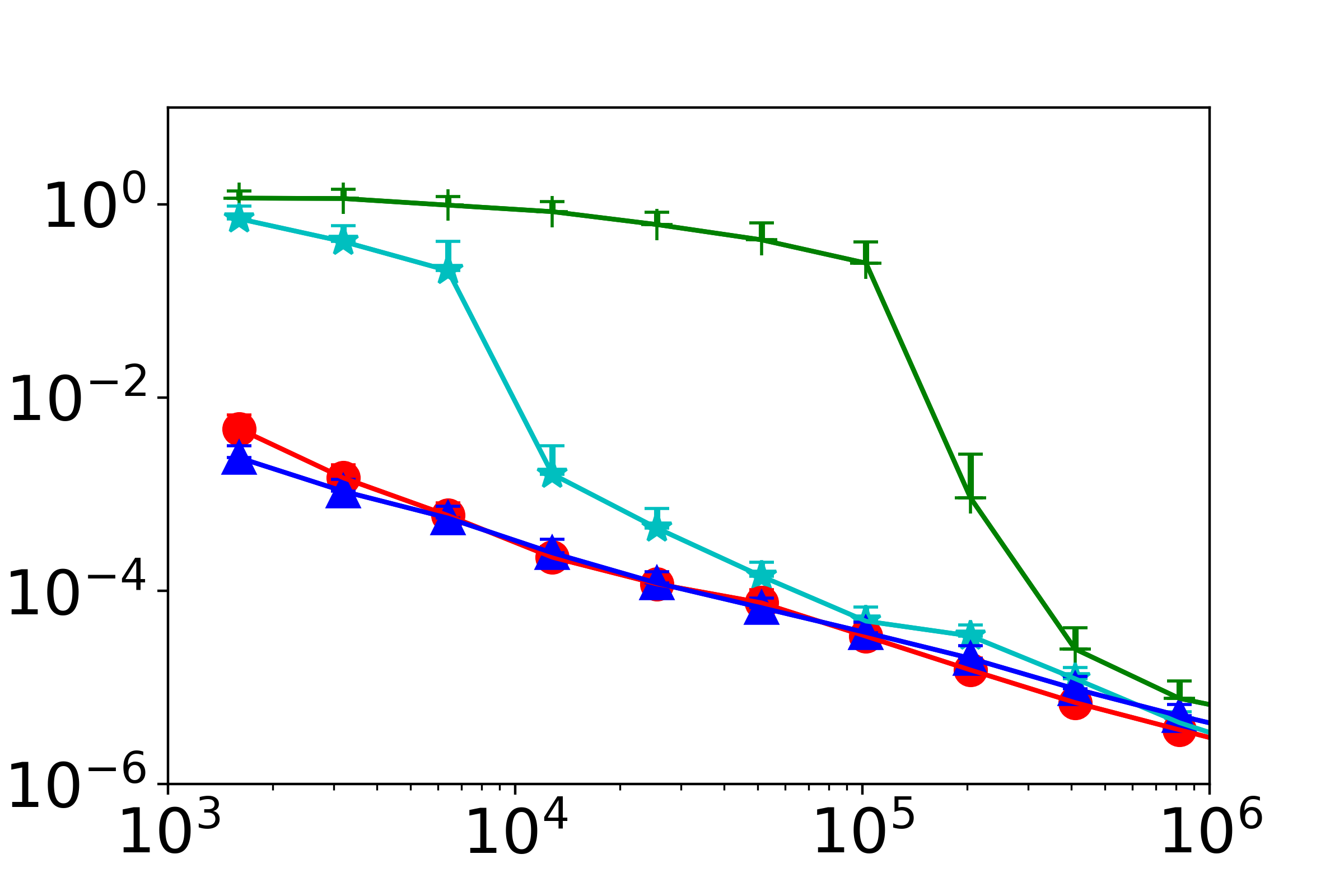}}
\end{minipage}%
\begin{minipage}{.33\textwidth}
\subfigure[$\sigma_\eta^2/\sigma_\xi^2=10$, Huber, $\gamma=10$]{
\includegraphics[width=.95\textwidth]{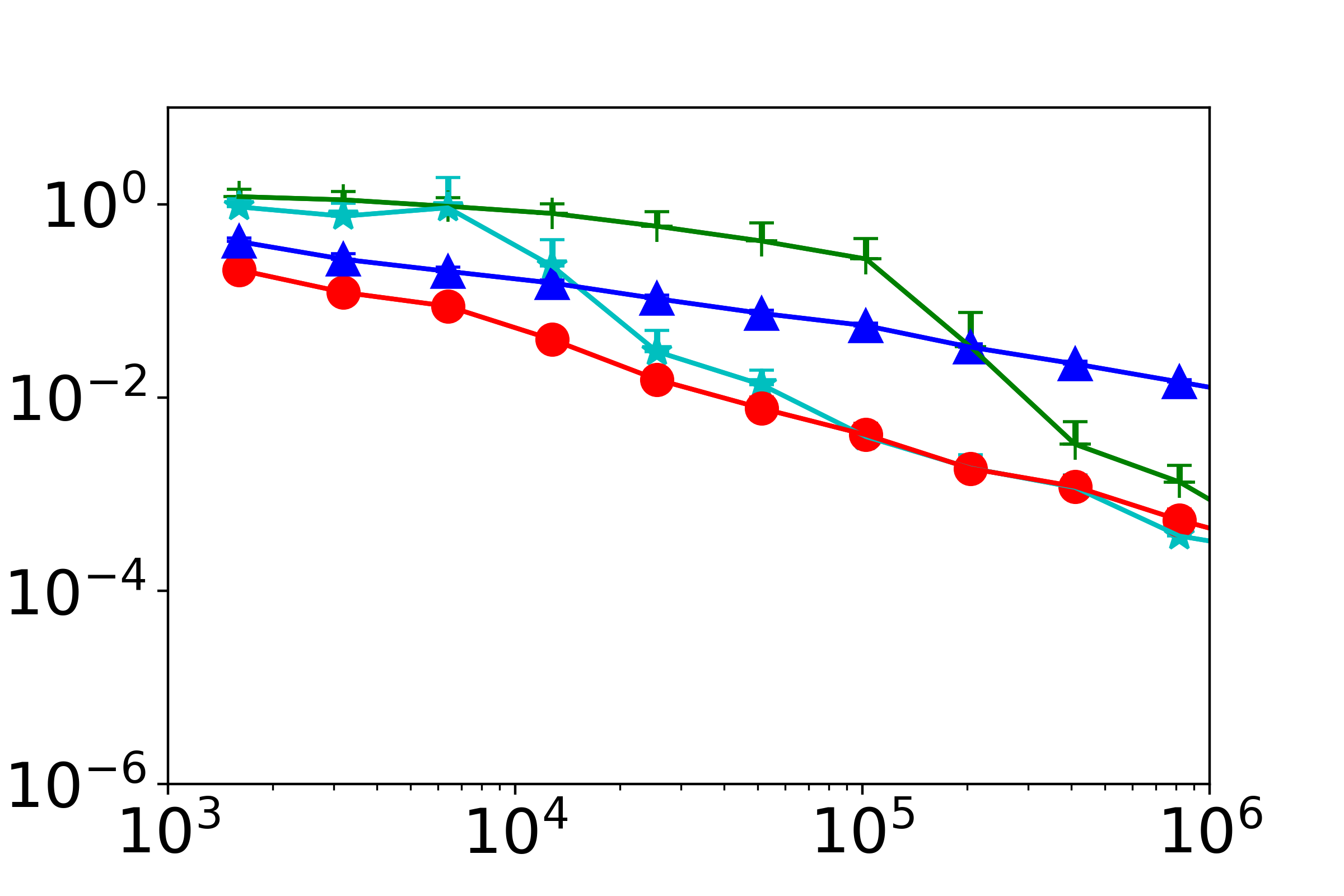}}
\end{minipage}
\begin{minipage}{.33\textwidth}
\subfigure[$\sigma_\eta^2/\sigma_\xi^2=100$, Huber, $\gamma=10$]{
\includegraphics[width=.95\textwidth]{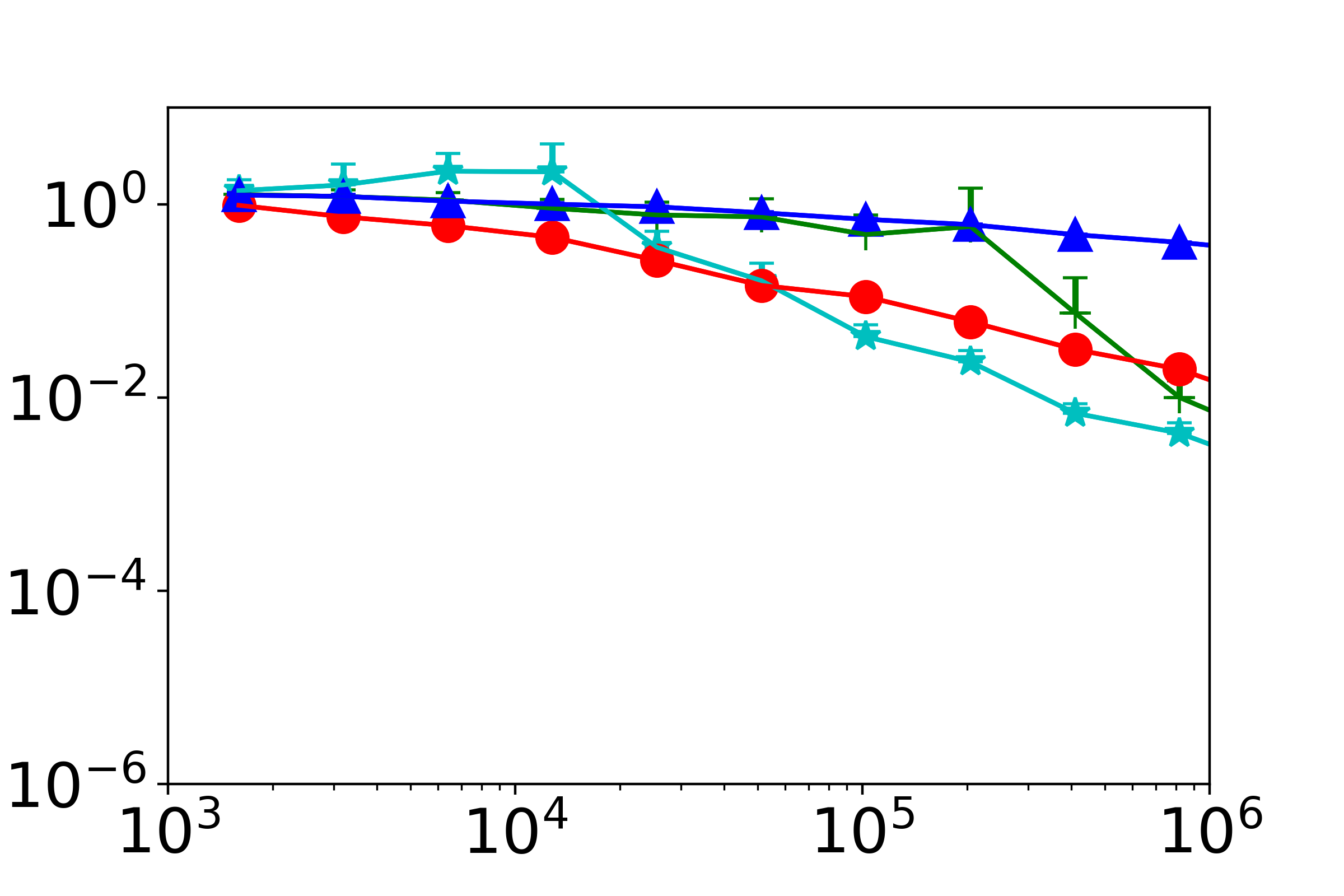}}
\end{minipage}%

\begin{minipage}{1\textwidth}
\centering
\subfigure{
\includegraphics[width=0.8\textwidth]{Legend.png}}
\end{minipage}%
\caption{Error of SAA for absolute value loss and Huber loss, dimension $d=20$}
\end{figure}

Figure~\ref{fig:dependent_absolute} (a)-(c) shows that setting $n=\cO(\sqrt{T})$ indeed yields almost the best accuracy for absolute value loss minimization, which again matches our analysis. The overall performance of SAA for the original and that of the smoothed problems behave quite similarly in this case, yet solving the smoothed problem yields much better accuracy under the same budget. This also supports our theoretical findings that the sample complexity is lower for smooth problems. 

\subsection{Comparison of Conditional Sampling and Independent Sampling}
In this experiment, we consider a modified logistic regression example, that falls into the special case with independent inner and outer randomness: 
\begin{equation*}
\min_{x\in\mathcal{X}} F(x)=\EE_{\xi=(a,b)}\log(1+\exp(-b(\EE_{\eta}\eta+a)^\top x)),
\end{equation*} 
where $a\sim \cN(0,\sigma_\xi^2I_{d})\in\RR^d$ is a random feature vector, $b\in\{\pm 1\}$, $\eta\sim\cN(0,\sigma_\eta^2 I_d)$ is the noise. 
The empirical function of the two sampling schemes $\hat F_{nm}(x)$ is of the form 
\begin{equation*}
    \hat F_{nm}(x) = \frac{1}{n} \sum_{i=1}^n \log\bigg(1+\exp\big(-b_i \big(\frac{1}{m}\sum_{j=1}^m \eta_{ij}+a_i\big)^\top x\big)\bigg).
\end{equation*} 
When employing the independent sampling scheme, we generate $\{\eta_{1j}\}_{j=1}^m$ and let $\eta_{ij} =\eta_{1j}$ for all $i>1$. 

For both sampling schemes, the optimal allocation for $n$ is in the order of $\cO(\sqrt{T})$, and $m$ is set to $m=T/n$ or $m = T-n$. In the experiment, $d=\{10,100\}$. $\sigma_\xi^2 = 1$, and $\sigma_\eta^2=10$, and the samples are generated accordingly. For any given sample budget $T$, we compare the performance of the two sampling scheme under different choices of outer sample $n$ varying from $0$ to $10000$.
 
\begin{figure}[t]
\label{fig:comparison}
\centering
\begin{minipage}{.5\textwidth}
\subfigure[$d=10$, $T=10000$, Varying outer sample size $n$]
{
\includegraphics[width=.95\textwidth]{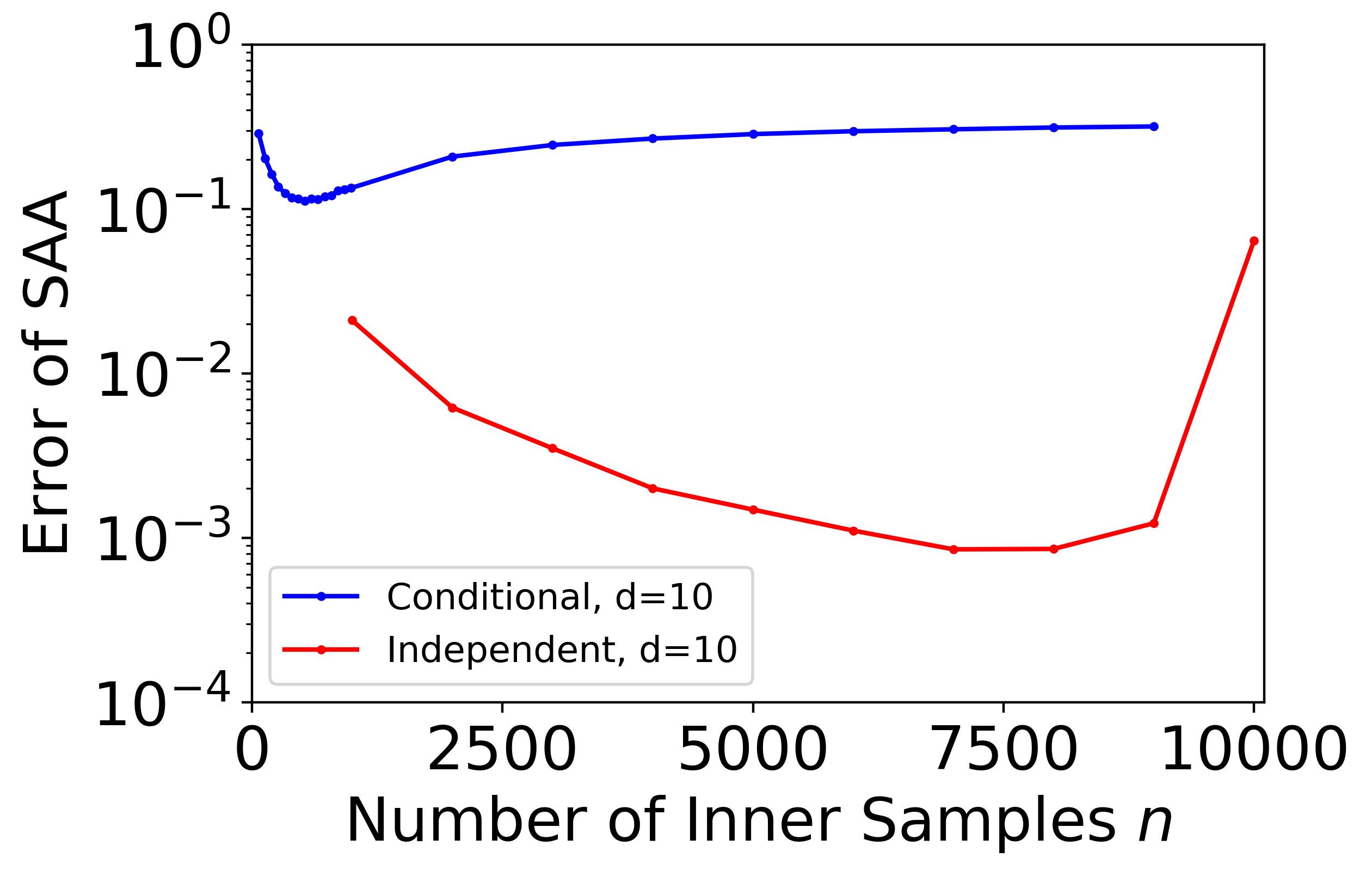}
}
\end{minipage}%
\begin{minipage}{.5\textwidth}
\subfigure[Various $d$ and $T$]
{\includegraphics[width=.95\textwidth]{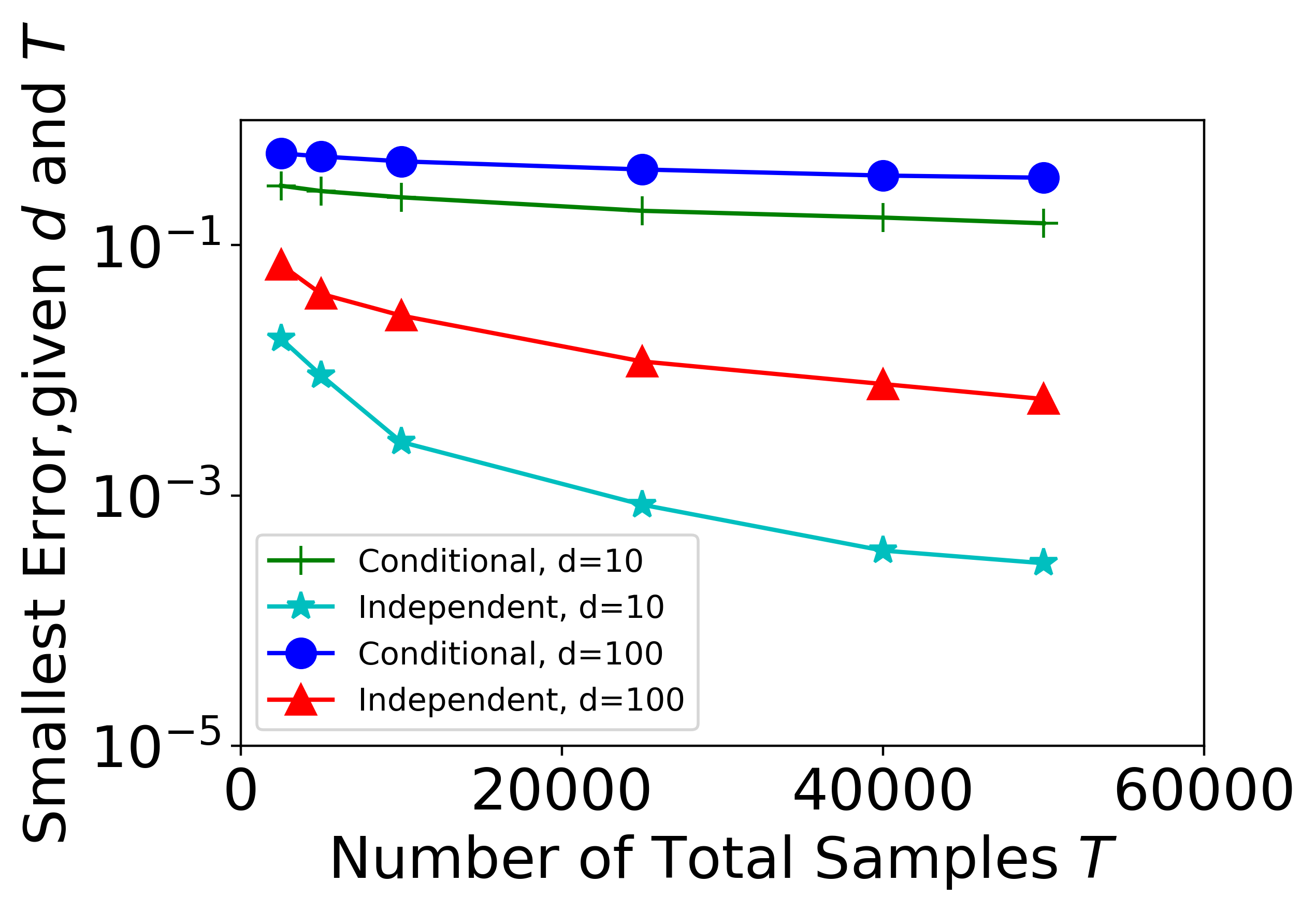}}
\end{minipage}

\caption{Comparison of conditional sampling and independent sampling schemes}
\end{figure}
 
Figure~\ref{fig:comparison}(a) illustrates the comparison when $d=10$, and $ T=10000$. The bell shape in~Figure~\ref{fig:comparison}(a) reflects a clear bias-variance tradeoff for different $n$ and $m$. 

In Figure~\ref{fig:comparison}(b), we report the best performance (by choosing the best $n$) of these two sampling schemes with $d\in\{10, 100\}$, and $T$ ranging from $1000$ to $50000$.   Figure~\ref{fig:comparison}(b) shows that the independent sampling scheme always achieves a smaller error for the logistic regression problem. The gap between  the two schemes decreases as the dimension increases, which also  matches  our analysis.

\section{Conclusion}
\label{sec:con}
In this paper, we introduce the class of conditional stochastic optimization problems and  provide sample complexity analysis of sample average approximation under different structural assumptions. Our results show that the overall sample complexity can be significantly reduced under Lipschitz smoothness condition, which is very different from the theory of classical stochastic optimization and multi-stage stochastic programming. By exploiting error bound conditions, the sample complexity could be further reduced. To our best knowledge, these are the first non-asymptotic sample complexity results established in the context of conditional stochastic optimization. For future work, we will investigate stochastic approximation algorithms for solving this family of problems and establish their sample complexities. 

\appendix
\section{Proof of Propositions}
\label{app:pre}
\subsection{Proof of Lemma \ref{lem:ldb}}
\begin{proof}
\if 0
For any $t^\prime>0$, by Markov inequality and i.i.d zero-mean samples $X_i$, we have
\begin{equation}
\begin{aligned}
    \PP(\bar X \geq \eps) ~& = \PP\bigg(\frac{1}{n}\sum_{i=1}^n X_i \geq \eps\bigg)
     = \PP\bigg(\exp{(t^\prime\frac{1}{n}\sum_{i=1}^n X_i)} \geq \exp{(t^\prime\eps)} \bigg)\\
    & \leq \frac{\EE \exp{(\frac{t^\prime}{n}\sum_{i=1}^n X_i)} }{\EE \exp{(t^\prime\eps)}} 
     = \bigg(\exp{(-\frac{t^\prime}{n}\eps)}\bigg)^n M\bigg(\frac{t^\prime}{n}\bigg)^n.
\end{aligned}
\end{equation}
Replacing $ \frac{t^\prime}{n}$ with $t$, and taking logarithm on both sides, then for all $t>0$, we have
\begin{equation*}
\frac{1}{n} \log \PP(\bar X \geq \eps) \leq -(t\eps- \log M(t)).
\end{equation*}
Minimizing over $t$, we further obtain,
\begin{equation*}
\frac{1}{n} \log \PP(\bar X \geq \eps) \leq -\max_t (t\eps- \log M(t)) = -I(\eps), 
\end{equation*}
namely,
\begin{equation*}
    \PP(\bar X \geq \eps) \leq \exp(-nI(\eps)).
\end{equation*}
The rate function $I(\eps)$ of $X$ is a convex function and $\min I(\eps) = I(0)=0$. By definition of  $I(\eps)$, $I(\eps)>0$ for $\eps >0$,  $I^\prime(0)=0$, and $I^{\prime\prime}(0)=\sigma^{-2}$, where $\sigma^2$ is the variance of $X$. By Taylor expansion, we obtain,
\begin{equation*} 
I(\eps) =\frac{\eps^2}{2\sigma^2}+o(\eps^2).
\end{equation*}
It implies that for any $\delta>0$, there exists $\eps_1>0$, for all $\eps \in (0,\eps_1)$, 
$I(\eps) \geq \frac{\eps^2}{(2+\delta)\sigma^2}.$ 
Alternatively, if $X$ is a zero-mean sub-Gaussian random variable, by definition we have 
$
   \PP\bigg(\frac{1}{n}\sum_{i=1}^n X_i \geq \eps\bigg) \leq \exp(- \frac{n\eps^2}{2\sigma^2})$, $\forall \eps>0.
$
\fi

The proof of one dimension random variable case was given in~\cite{kleywegt2002sample} using Chernoff bound. Based on that, we consider the case when $X$ is a zero-mean random vector in $\RR^k$. Denote $X_i = (X_i^1,X_i^2,\cdots,X_i^k)^\top$ for $i=1,\cdots,n$, $\sigma_j^2 = \Var(X^j)$, $z_j=\frac{\sum_{j=1}^k \sigma_j^2}{ \sigma_j^2}$, and $I_j(\cdot)$  the rate function of the $j^{th}$ coordinate of the random vector $X$. We have 
\begin{equation}
\label{eq:ldbv}
    \begin{split}
        &\PP(\norm{\bar X }_2 \geq \eps) 
        = \PP\bigg(\sum_{j=1}^k (\bar X^j -\EE X^j)^2\geq \eps^2\bigg)
        \leq \sum_{j=1}^k \PP\bigg( (\bar X^j )^2\geq \frac{\eps^2}{z_j}\bigg)\\
        = &  \sum_{j=1}^k \PP\bigg( |\bar X^j|\geq \frac{\eps}{\sqrt{z_j}}\bigg)
        \leq \sum_{j=1}^k \exp{\bigg(-n \min\bigg\{I_j(\frac{\eps}{\sqrt{z_j}});I_j(-\frac{\eps}{\sqrt{z_j}})\bigg\}\bigg)}
    \end{split}
\end{equation}
By Lemma \ref{lem:ldb} and definition, we get 
\begin{equation*}
    \PP(\norm{\bar X }_2 \geq \eps) \leq 2\sum_{j=1}^k  \exp{\bigg(-\frac{n\eps^2}{(\delta+2)z_j\sigma_j^2}\bigg)}
        =  2k\exp{\bigg(-\frac{n\eps^2}{(\delta+2)\sum_{j=1}^k\sigma_j^2}\bigg)}.
\end{equation*}
Using the fact that $\sum_{j=1}^k\sigma_j^2 \leq \EE \norm{X}_2^2$, we obtain the desired result.
\end{proof}

\if 0
\subsection{Proof of Lemma~\ref{lm:bias}}
\begin{proof}{}
Denote $X_{j}(x): = g_{\eta_{j}}(x,\xi)-\EE_{\eta|\xi}g_{\eta}(x,\xi)$, and $\{\eta_{j}\}_{j=1}^m=\{\eta_{1},\cdots, \eta_{m}\}$. By Assumption \ref{as:est},
\begin{equation}
\label{eq:mv}
    \EE_{\{\eta_{j}\}_{j=1}^m|\xi} \bigg(\frac{1}{m}\sum_{j=1}^m X_{j}(x)\bigg)=0, \quad 0 \leq \EE_{\{\eta_{j}\}_{j=1}^m|\xi} \norm{\frac{1}{m}\sum_{j=1}^m X_{j}(x)}_2^2 \leq \frac{\sigma_{g}^2}{m}.
\end{equation}
If $f_\xi(\cdot)$ is $L_f$-Lipschitz continuous,
\begin{equation*}
    \bigg|\EE f_{\xi} \bigg(\frac{1}{m}\sum_{j=1}^m g_{\eta_{j}}(x,\xi)\bigg)   -   f_{\xi}\bigg(\EE_{\eta|\xi}g_{\eta}(x,\xi)\bigg)\bigg|
    \leq 
    \EE L_f \norm{\frac{1}{m}\sum_{j=1}^m X_{j}(x)}_2 
    \leq 
    \frac{ L_f \sigma_{g}}{\sqrt{m}}.
\end{equation*}
If $f_\xi(\cdot)$ is $S$-Lipschitz smooth,
\begin{equation*}
\begin{aligned}
	&\bigg|f_{\xi} \bigg(\frac{1}{m}\sum_{j=1}^m g_{\eta_{j}}(x,\xi)\bigg) 
    - f_{\xi}\bigg(\EE_{\eta|\xi}g_{\eta}(x,\xi)\bigg)
    - \nabla f_{\xi} \bigg(\EE_{\eta|\xi}g_{\eta}(x,\xi)\bigg)^\top \frac{1}{m}\sum_{j=1}^m X_{j}(x)\bigg| \\
    \leq &\frac{S}{2} \norm{\frac{1}{m}\sum_{j=1}^m X_{j}(x)}_2^2.
    \end{aligned}
\end{equation*}
$\EE_{\{\eta_{j}\}_{j=1}^m|\xi}  \nabla f_{\xi} \big(\EE_{\eta|\xi}g_{\eta}(x,\xi)\big)^\top \big(\frac{1}{m}\sum_{j=1}^m X_{j}(x)\big)=0$ since $x$ is independent of ${\eta}_{j=1}^m$, we obtain,
\begin{equation*}
    \EE   f_{\xi} \bigg(\frac{1}{m}\sum_{j=1}^m g_{\eta_{j}}(x,\xi)\bigg)   -   f_{\xi}\bigg(\EE_{\eta|\xi}g_{\eta}(x,\xi)\bigg)
    \leq 
    \EE \bigg[ \frac{S}{2} \norm{\frac{1}{m}\sum_{j=1}^m X_{j}(x)}_2^2 \bigg] \\
    \leq  
    \frac{S\sigma_{g}^2}{2m}.
\end{equation*}
For the other side, we could derive a similar lower bound. As a result, (\ref{eq:smooth_bias}) holds.
\end{proof}
\fi

\section{Proof of Theorem~\ref{thm:iscsaa}}
\lable{sec:appendix-ind}

\paragraph{Convergence Analysis} We follow a similar decomposition as we did in proving Theorem~\ref{thm:SAA_error_bound} and use the same notations, like $\hat F_{nm}^{(k)}(x)$ and $\hat x_{nm}^{(k)}$, the perturbed empirical function and its minimizer, except that we replace all the $\eta_{kj}$ with $\eta_j$ for $k=1,\cdots,n$ and replace the conditional expectation $\EE_{\eta\given\xi}$ with $\EE_{\eta}$. Unfortunately, one will immediately notice that Lemma \ref{lm:bias} is no longer applicable for bounding the second term in (\ref{eq:decompose_term_1}):
\begin{equation*}
\EE \bigg[\frac{1}{n}\sum_{k =1}^n  f_{\xi_k} \bigg( \EE_{\eta} g_\eta(\hat x_{nm}^{(k)},\xi_k) \bigg) -\frac{1}{n}\sum_{k =1}^n f_{\xi_k}\bigg(\frac{1}{m}\sum_{j=1}^m g_{\eta_{j}}(\hat x_{nm}^{(k)},\xi_k)\bigg)\bigg].
\end{equation*}
Because the minimizer $\hat x_{nm}^{(k)}$ depends on $\{\eta_j\}_{j=1}^m$. Then Lemma~\ref{lm:bias} is not applicable. Below we provide the detailed proof of Theorem \ref{thm:iscsaa}.
\begin{proof} 
\if 0
It is clear that $x^*$ has no randomness, $\hat x_{nm}$ is a function of $\{\xi_i\}_{i =1}^n, \{\eta_{j}\}_{j=1}^m$. Since $x^*$ is the minimizer of $F(x)$, $F(\hat x_{nm}) - F(x^*)\geq 0$. By Markov inequality, for any $\eps>0$, 
\begin{equation}
\label{eq:imarkov}
    \PP \bigg( F(\hat x_{nm}) - F(x^*) \geq \eps \bigg)
    \leq \frac{\EE [F(\hat x_{nm}) - F(x^*)]}{\eps}  .
\end{equation}

We decompose the error $F(\hat x_{nm}) - F(x^*)$
\begin{equation*}
\begin{split}
    F(\hat x_{nm}) - F(x^*)
    =
    F(\hat x_{nm})- \hat F_{nm}(\hat x_{nm}) + \hat F_{nm}(\hat x_{nm}) - \hat F_{nm}(x^*)+ \hat F_{nm}(x^*)-  F(x^*)\\
\end{split}
\end{equation*}
\fi

Define $\Epsilon_1 := F(\hat x_{nm})- \hat F_{nm}(\hat x_{nm})$, and
\begin{equation*}
     \hat F_{nm}^{(k)}(x) := \frac{1}{n}\sum_{i\not = k}^n f_{\xi_i}\bigg(\frac{1}{m}\sum_{j =1}^m g_{\eta_{j}}(x,\xi_i)\bigg)+\frac{1}{n}f_{\xi_k^\prime}\bigg(\frac{1}{m}\sum_{j =1}^m g_{\eta_{j}}(x,\xi_k^\prime)\bigg),
\end{equation*}
the empirical function by replacing the outer sample $\xi_k$ with an i.i.d sample $\xi_k^{\prime}$. Denote $\hat x_{nm}^{(k)} = \argmin_{x \in \mathcal{X}} \hat F_{nm}^{(k)}(x)$. Then, $\EE \Epsilon_1 $ could be written as:
\begin{equation}\label{eq:decompose_term_2}
\begin{split}
\EE \Epsilon_1 = &\EE\bigg[F(\hat x_{nm})
-\frac{1}{n}\sum_{k =1}^n  f_{\xi_k} \bigg( \EE_{\eta} g_\eta(\hat x_{nm}^{(k)},\xi_k) \bigg)\bigg]\\
+& \EE \bigg[\frac{1}{n}\sum_{k=1}^n f_{\xi_k} \bigg( \EE_{\eta} g_\eta(\hat x_{nm}^{(k)},\xi_k) \bigg) - \frac{1}{n}\sum_{k=1}^n f_{\xi_k}\bigg(\frac{1}{m}\sum_{j=1}^m g_{\eta_{j}}(\hat x_{nm}^{(k)},\xi_k)\bigg)\bigg]\\
+&\EE \bigg[\frac{1}{n}\sum_{k =1}^n f_{\xi_k}\bigg(\frac{1}{m}\sum_{j=1}^m g_{\eta_{j}}(\hat x_{nm}^{(k)},\xi_k)\bigg) - \hat F_{nm}(\hat x_{nm})\bigg].
\end{split}
\end{equation}

Since $\xi_k$ and $\xi_k^\prime$ are i.i.d,  $\hat x_{nm}$ and $\hat x_{nm}^{(k)}$ follow identical distribution. Then $\EE F(\hat x_{nm}) = \EE F(\hat x_{nm}^{(k)})$. As $\hat x_{nm}^{(k)}$ is independent of $\xi_k$, by definition of $F(x)$, we know $\EE F(\hat x_{nm}^{(k)})=\EE f_{\xi_k} ( \EE_{\eta} g_\eta(\hat x_{nm}^{(k)},\xi_k))$ for any $k=1,\cdots,n$. As a result, the first term is $0$. 

To analyze the second term, denote 
\begin{equation*} H_k(x):= 
f_{\xi_k} \bigg( \EE_{\eta} g_\eta(x,\xi_k) \bigg) - f_{\xi_k}\bigg(\frac{1}{m}\sum_{j=1}^m g_{\eta_{j}}(x,\xi_k)\bigg).
\end{equation*}
We pick a $\upsilon$-net $\{x_l\}_{l=1}^{Q}$ for the decision set $\mathcal{X}$, such that for any $x\in\mathcal{X}$, there exists $l_0 \in \{1,\cdots, Q\}$, $\|x-x_{l_0}\|\leq \upsilon$. Then it holds for any $s>0$, 
\begin{equation}
\label{eq:expmax}
\begin{split}
      & \exp\left(s\EE H_k(\hat x_{nm}^{(k)})\right)  
      \leq   \exp\left(  s\EE\max_{l=1,\cdots,Q} H_k(x_l)+2s\upsilon L_f L_g\right) \\
      \leq &\EE \exp\left(  s\max_{l=1,\cdots,Q} H_k(x_l)+2s\upsilon L_f L_g\right)  
      = \EE \max_{l=1,\cdots,Q}\exp\left(   sH_k(x_l)+2s\upsilon L_f L_g\right)\\
       \leq & \EE \sum_{l=1}^Q \exp\left( sH_k(x_l)+2s\upsilon L_f L_g\right) 
       = \sum_{l=1}^Q \EE \exp\left( sH_k(x_l)+2s\upsilon L_f L_g\right) .
\end{split}
\end{equation}
The first inequality holds as $\hat x_{nm}^{(k)}$ is independent of $\xi_k$, and  $f_{\xi}(\cdot)$ and $g_\eta(\cdot, \xi)$ are Lipschitz continuous, which implies,
\begin{equation*}
H_k(\hat x_{nm}^{(k)}) \leq \sup_{x\in\mathcal{X}} H_k(x)  \leq \max_{l=1,\cdots,Q} H_k(x_l)+2\upsilon L_f L_g.
\end{equation*} 
The second inequality holds by Jensen's inequality.
Next we show that $H_k(x_l)-\EE H_k(x_l)$ is a sub-Gaussian random variable for any given $\xi_k$. Since  $H_k(x_l)$ is a function of $\{\eta_j\}_{j=1}^m$. Denote $H_k(x_l) := \tilde H(\eta_1, \ldots, \eta_m)$. Then for any $p\in[m]$, and given $\eta_1,\ldots,\eta_{p-1}, \eta_{p+1},$ $\cdots, \eta_m$, we have
\begin{equation*}
\begin{split}
    & \sup_{\eta_p^\prime} \tilde H(\eta_1,\cdots,\eta_{p-1},\eta_p^\prime, \eta_{p+1},\cdots, \eta_m) - \inf_{\eta_p^{\prime\prime}} \tilde H(\eta_1,\cdots,\eta_{p-1},\eta_p^{\prime\prime}, \eta_{p+1},\cdots, \eta_m)\\
    = & \sup_{\eta_p^\prime, \eta_p^{\prime\prime}} \EE_{\xi_k} f_{\xi_k}\!\bigg(\!\frac{1}{m}\sum_{j\not=p}^m g_{\eta_{j}}(x,\xi_k)\!+\!\frac{1}{m}g_{\eta_{p}^{\prime\prime}}(x,\xi_k)\bigg)\!-\!f_{\xi_k}\!\bigg(\!\frac{1}{m}\sum_{j\not=p}^m g_{\eta_{j}}(x,\xi_k)\!+\!\frac{1}{m}g_{\eta_{p}^{\prime}}(x,\xi_k)\!\bigg)\\
    \leq & \sup_{\eta_p^\prime, \eta_p^{\prime\prime}}\EE_{\xi_k} \frac{L_f}{m}\bigg|g_{\eta_{p}^{\prime\prime}}(x,\xi_k)-g_{\eta_{p}^{\prime}}(x,\xi_k)\bigg| \\
    \leq & \frac{2M_gL_f}{m},
\end{split}
\end{equation*}
where $M_g$ is the upper bound of $g_\eta(\cdot, \xi)$ on $\mathcal{X}$.
It implies that $H_k(x_l)=\tilde H(\eta_1, \cdots, \eta_m)$ has bounded difference $\frac{2M_gL_f}{m}$.
By McDiarmid’s inequality~\cite{mcdiarmid_1989}, for any $r>0$,
\begin{equation*}
    \PP( H_k(x_l) -\EE H_k(x_l) \geq r) \leq 2\exp\bigg(-\frac{r^2m}{2M_g^2 L_g^2}\bigg).
\end{equation*}
It implies that $H_k(x_l)-\EE H_k(x_l)$ is a sub-Gaussian random variable with zero mean and variance proxy $2M_g^2L_f^2/m$ for any given $\xi_k$.
By definition it yields
\begin{equation*}
    \EE \exp\left(s\left[H_k(x_l)-\EE H_k(x_l)\right]\right) \leq \exp\bigg(\frac{2M_g^2L_f^2s^2 }{m}\bigg).
\end{equation*}
Since $x_l$ is independent of random vectors $\{\eta_j\}_{j=1}^m $, by Lemma~\ref{lm:bias}, we know $\EE H_k(x_l)\leq \frac{L_f\sigma_g}{\sqrt{m}}$. It further implies
\begin{equation*}
\EE \exp(sH_k(x_l)) \leq \exp\bigg(\frac{2M_g^2L_f^2s^2 }{m}+ \frac{sL_f\sigma_g}{\sqrt{m}} \bigg).
\end{equation*}
With ~(\ref{eq:expmax}), we have
\begin{equation*}
     \exp\left(s\EE H_k(\hat x_{nm}^{(k)})\right) 
     \leq
     Q\exp\left(\frac{2M_g^2L_f^2s^2 }{m}+\frac{sL_f\sigma_g}{\sqrt{m}}+2s\upsilon L_f L_g\right).
     \nonumber
\end{equation*}
Taking the logarithm, dividing $s$ on each side, and minimizing over $s$ yields
\begin{equation}
    \EE H_k(\hat x_{nm}^{(k)}) 
    \leq 
    2\sqrt{\frac{2\log(Q) L_f^2 M_g^2}{m}}+\frac{L_f\sigma_g}{\sqrt{m}}+2\upsilon L_f L_g.
    \nonumber
\end{equation}
Since $Q \leq \cO(1)(D_\mathcal{X}/\upsilon)^d$, we have
\begin{equation}
\label{eq:isca1}
\begin{split}
\EE H_k(\hat x_{nm}^{(k)}) & \leq \cO(1)\frac{L_fM_g}{\sqrt{m}}  \sqrt{d\log\left(\frac{D_\mathcal{X} }{\upsilon}\right)} +\frac{L_f\sigma_g}{\sqrt{m}}+2\upsilon L_f L_g.
\end{split}
\end{equation}

For the third term in (\ref{eq:decompose_term_2}), by following the similar steps from (\ref{eq:sca1}) to (\ref{eq:strong2}), we obtain
\if 0
by optimality of $\hat x_{nm}^{(k)}$ over $\hat F_{nm}^{(k)}(x)$ and Lipschitz continuity of $f_\xi$ and $g_\eta$, we have 
\begin{equation}
\label{eq:isca2}
    \begin{aligned}
    &\hat F_{nm}(\hat x_{nm}^{(k)})  - \hat F_{nm}(\hat x_{nm}) \\
    \leq &  
     \frac{1}{n}f_{\xi_k}\bigg(\frac{1}{m}\sum_{j=1}^m g_{\eta_{j}}(\hat x_{nm}^{(k)},\xi_k)\bigg) 
    - \frac{1}{n}f_{\xi_k}\bigg(\frac{1}{m}\sum_{j =1}^m g_{\eta_{j}}(\hat x_{nm},\xi_k)\bigg)\\
     + &\frac{1}{n}f_{\xi_k^{\prime}}\bigg(\frac{1}{m}\sum_{j =1}^m g_{\eta_{j}}(\hat x_{nm},\xi_k^{\prime})\bigg) 
    - 
    \frac{1}{n}f_{\xi_k^{\prime}}\bigg(\frac{1}{m}\sum_{j =1}^m g_{\eta_{j}}(\hat x_{nm}^{(k)},\xi_k^{\prime})\bigg)\\
    \leq &\frac{2}{n}L_f L_g \norm{\hat x_{nm}^{(k)}-\hat x_{nm}}_2
    \end{aligned}
\end{equation}
\if 0
 we have
\begin{equation}
    \begin{aligned}
        \hat F_{nm}(\hat x_{nm}^{(k)})  - \hat F_{nm}(\hat x_{nm}) 
        ~&\leq \frac{1}{n}L_f\bigg|\bigg|\frac{1}{m}\sum_{j =1}^m \bigg[ g_{\eta_{j}}(\hat x_{nm},\xi_k^{\prime})- g_{\eta_{j}}(\hat x_{nm}^{(k)},\xi_k^{\prime})\bigg]\bigg|\bigg|_2\\
        ~& + \frac{1}{n}L_f\bigg|\bigg|\frac{1}{m}\sum_{j =1}^m \bigg[ g_{\eta_{j}}(\hat x_{nm}^{(k)},\xi_k)- g_{\eta_{j}}(\hat x_{nm},\xi_k)\bigg]\bigg|\bigg|_2\\
        ~& \leq .
    \end{aligned}
\end{equation}
\fi

Since $\hat x_{nm}$ is the unique minimizer of $\hat F_{nm}(x)$, and $\hat F_{nm}(x)$ satisfies QG condition with parameter $\mu$, we obtain,
\begin{equation*}
    \hat F_{nm}(\hat x_{nm}^{(k)})  - \hat F_{nm}(\hat x_{nm}) \geq \mu \norm{\hat x_{nm}^{(k)}-\hat x_{nm}}_2^{1+\delta}.
\end{equation*}
Combining with (\ref{eq:isca2}), we have,
\begin{equation*}
\norm{\hat x_{nm}^{(k)}-\hat x_{nm}}_2 \leq \big(\frac{2L_f L_g}{\mu n }\big)^{1/\delta}.
\end{equation*}
By Lipschitz continuity of $f_\xi(\cdot)$ and $g_\eta(\cdot,\xi)$, we obtain,
\fi
\begin{equation}
\label{eq:isca3}
\begin{split}
    & \EE \bigg[\frac{1}{n}\sum_{k =1}^n f_{\xi_k}\bigg(\frac{1}{m}\sum_{j=1}^m g_{\eta_{j}}(\hat x_{nm}^{(k)},\xi_k)\bigg) - \hat F_{nm}(\hat x_{nm})\bigg] 
    \leq  L_f L_g\bigg(\frac{2L_f L_g}{\mu n}\bigg)^{1/\delta}.
\end{split}
\end{equation}
Combining with (\ref{eq:decompose_term_2}), (\ref{eq:isca1}), and (\ref{eq:isca3}),
\begin{equation}
\label{eq:iscA}
    \EE \Epsilon_1
    \leq 
    L_f L_g\bigg(\frac{2L_f L_g}{\mu n}\bigg)^{1/\delta}+\cO(1)\frac{L_fM_g}{\sqrt{m}}  \sqrt{d\log\left(\frac{12D_\mathcal{X}L_fL_g}{\alpha\eps}\right)} +\frac{L_f\sigma_g}{\sqrt{m}}+2\upsilon L_f L_g.
\end{equation}
Similar with the steps from (\ref{eq:scB}) and (\ref{eq:scC}), by optimality of $\hat x_{nm}$ of $\hat F_{nm}$ and Lemma \ref{lm:bias},
\begin{equation}
\label{eq:iscB}
    \EE [\hat F_{nm}(\hat x_{nm})- \hat F_{nm}(x^*)] \leq 0; \quad \EE [ \hat F_{nm}(x^*)- F(x^*)]
    \leq 
    \frac{L_{f}\sigma_{g}}{\sqrt{m}}.
\end{equation}
Finally, combining (\ref{eq:iscA}), (\ref{eq:iscB}), with Markov inequality, we obtain (\ref{eq:convergence_iscsaa}).
\if 0
, namely,
\begin{equation*}
\begin{split}
&\PP (F(\hat x_{nm}) - F(x^*) \geq \epsilon )\\
    \leq &
    L_f L_g\bigg(\frac{2L_f L_g}{\mu n\eps^\delta}\bigg)^{1/\delta}
    +
    \cO(1)\frac{L_fM_g}{\sqrt{m}}  \sqrt{d\log\left(\frac{12D_\mathcal{X}L_fL_g}{\alpha\eps}\right)} +\frac{2L_f\sigma_g}{\sqrt{m\eps^2}}+2\upsilon L_f L_g.
\end{split}
\end{equation*}
\fi

Let
\begin{equation*}
L_f L_g\bigg(\frac{2L_f L_g}{\mu n\eps^\delta}\bigg)^{1/\delta}\leq \frac{\alpha}{2}; \quad  \cO(1)\frac{L_fM_g}{\sqrt{m\eps^2}}  \sqrt{d\log\left(\frac{D_\mathcal{X} }{\upsilon}\right)}\leq \frac{\alpha}{6}; \quad \frac{2L_f\sigma_g}{\sqrt{m\eps^2}} \leq \frac{\alpha}{6}.
\end{equation*}
We obtain the desired sample complexity~(\ref{eq:isamplecomplexity}).
\end{proof}

\section{Example of Huber Loss Minimization}\label{sec:appendix-ex}
\if 0
For $\gamma >0$, consider the following problem
\begin{equation*}
\min_{x\in\mathcal{X}} F(x):=  H(\EE_\eta [x+\eta],\gamma)+(\EE_\eta [x+\eta])^2,
\end{equation*}
where $\eta\sim\cN(0,\sigma_\eta^2)$ and $H(\cdot,\gamma)$ is the Huber function, i.e., 
\begin{equation*}
H(x,\gamma) = 
\left\{
\begin{aligned}
& |x|-\frac{1}{2}\gamma && \text{for }{|x|>\gamma},\\
&\frac{1}{2\gamma}x^2 &&  \text{for }{|x|\leq\gamma}.
\end{aligned}
\right.
\end{equation*}
Note that here $f(x) = H(x,\gamma)+x^2$ is deterministic, $g_\eta(x,\xi) = x+\eta$. When $\gamma >0$, $f(x)$ is $1/\gamma$-Lipschitz smooth. When $\gamma\rightarrow 0$, $f(x) \rightarrow |x|+x^2$, which is no longer differentiable. In this simple example, $x^* = \argmin_{x\in\mathcal{X}}F(x) = -\EE \eta$, $F^* = \min_{x\in\mathcal{X}}F(x) =  0$. The empirical objective function becomes
$
\hat F_{m}(x) = H(x+\bar \eta, \gamma)+(x+\bar \eta)^2,
$
where $\bar \eta =  \frac{1}{m}\sum_{j=1}^m \eta_j$. Then $\hat x_{m} =\argmin_{x\in\mathcal{X}}\hat F_{m}(x) =  -\bar \eta$.
\fi

To show (\ref{example3}), denote $Y = \EE \eta - \bar \eta$, then $Y \sim \cN(0, \frac{\sigma_\eta^2}{m})$. Then the error of SAA is 
\begin{equation}
\begin{split}
\label{eq:example3_2}
&\EE F(\hat x_{m}) - F(x^*)  = \EE H(\EE \eta - \bar \eta, \gamma)+ \EE (\bar \eta - \EE \eta)^2\\ 
= &  \int_{0}^\gamma \frac{1}{\gamma}y^2 p(y) dy +2\int^{+\infty}_\gamma \bigg(y-\frac{1}{2}\gamma\bigg) p(y) dy  +\EE Y^2,
\end{split}
\end{equation}
where $p(y) = \frac{\sqrt{m}}{\sqrt{2\pi\sigma_\eta^2}} \exp\big(-\frac{my^2}{2\sigma_\eta^2}\big)$ is the PDF of $Y$, and $\EE Y^2 = \frac{\sigma_\eta^2}{m}$.
Denote erf$(x) := \frac{2}{\sqrt{\pi}}\int_{0}^x \exp(-x^2)dx$, $y_1 := y \sqrt{\frac{m}{2\sigma_\eta^2}}$. The first term in (\ref{eq:example3_2}) is
\begin{equation*}
\begin{split}
\int_{0}^\gamma\! \frac{1}{\gamma}y^2 p(y) dy
= &
\frac{2\sigma_\eta^2}{m\gamma\sqrt{\pi}}\int_{0}^{\gamma\sqrt{\frac{m}{2\sigma_\eta^2}}} y_1^2  \exp(-y_1^2) dy_1\\
= &
\frac{\sigma_\eta^2}{2\gamma m} \text{erf}\bigg(\sqrt{\frac{\gamma^2 m}{2\sigma_\eta^2}}\bigg)-\sqrt{\frac{\sigma_\eta^2}{2\pi m}}\exp\bigg(-\frac{\gamma^2 m }{2\sigma_\eta^2}\bigg).
\end{split}
\end{equation*}
We use the fact that:
\begin{equation*}
\int_{0}^z x^2\exp(-x^2) dx = \frac{1}{4}\sqrt{\pi}\text{erf}(z) - \frac{1}{2}\exp(-z^2)z.
\end{equation*}
The second term in (\ref{eq:example3_2}) is bounded by 
\begin{equation*}
\sqrt{\frac{\sigma_\eta^2}{2\pi m}}\exp\bigg(-\frac{m\gamma^2}{2\sigma_\eta^2}\bigg)=\int^{+\infty}_\gamma y p(y) dy \leq 2 \int^{+\infty}_\gamma (y-\frac{1}{2}\gamma) p(y) dy \leq 2\int^{+\infty}_\gamma y p(y) dy.
\nonumber
\end{equation*}
Combining them together, we have (\ref{example3}).
\if 0
\begin{equation*}
\begin{split}
0\leq \EE F(\hat x_{nm}) - F(x^*)-\bigg(\frac{\sigma_\eta^2}{2\gamma m} \text{erf}(\sqrt{\frac{\gamma^2 m}{2\sigma_\eta^2}})+\frac{\sigma_\eta^2}{m}\bigg) \leq  \sqrt{\frac{\sigma_\eta^2}{2\pi m}}\exp(-\frac{m\gamma^2}{2\sigma_\eta^2}) .
\end{split}
\end{equation*}
\fi
For a given $\gamma>0$,
erf$\big(\sqrt{\frac{\gamma^2 m}{2\sigma_\eta^2}}\big) \rightarrow 1$ as $m\rightarrow\infty$. By (\ref{example3}), we have
\begin{equation*}
\EE F(\hat x_{nm}) - F(x^*) 
= \cO\bigg(\frac{1}{m}\bigg).
\end{equation*}
When $\gamma \rightarrow 0$, (\ref{eq:example3_2}) becomes
\begin{equation*}
\begin{split}
\lim_{\gamma\rightarrow 0}\EE F(\hat x_{m}) - F(x^*) &=\lim_{\gamma\rightarrow 0} \int_{0}^\gamma \frac{1}{\gamma}y^2 p(y) dy  +2\int^{+\infty}_\gamma \bigg(y-\frac{1}{2}\gamma\bigg) p(y) dy  + \frac{\sigma_\eta^2}{m}\\
& = \sqrt{\frac{\sigma_\eta^2}{2\pi m}}+\frac{\sigma_\eta^2}{m} = \cO\bigg(\frac{1}{\sqrt{m}}\bigg).
\end{split}
\end{equation*}

\section{Empirical Objectives  Satisfying Quadratic Growth Condition}
\label{app:qg}

\paragraph{Strongly Convex Function Composed with Linear Function}
The empirical objective function is $\hat F_{nm}(x) = \frac{1}{n}\sum_{i=1}^n f_{\xi_i}(A_i x)$, where $f_\xi(\cdot)$ is $\mu$-strongly convex, $A_i x:=\frac{1}{m} \sum_{j=1}^m g_{\eta_{ij}}(x,\xi_i) $, the average of  linear inner function $g_{\eta_{ij}}(x,\xi_i):=A_{\eta_{ij}} x$. 
To show that $\hat F_{nm}(x)$ satisfies the QG condition.  Denote $u_i = A_i y$, $v_i = A_i x$. Since $f_\xi(\cdot)$ is strongly convex,
\begin{equation*}
f_{\xi_i}(u_i) - f_{\xi_i}(v_i)- \nabla f_{\xi_i}(v_i)^\top (u_i-v_i) \geq \frac{\mu}{2}\norm{u_i-v_i}_2^2.
\end{equation*}
Taking average over $n$ such inequalities, we obtain
\begin{equation*}
\frac{1}{n}\sum_{i=1}^n f_{\xi_i}(u_i) - f_{\xi_i}(v_i)- \nabla f_{\xi_i}(v_i)^\top (u_i-v_i) \geq \frac{1}{n}\sum_{i=1}^n\frac{\mu}{2}\norm{u_i-v_i}_2^2.
\end{equation*}
Replacing $u_i$, $v_i$ with $A_i y$ and $A_i x$, we have
\begin{equation*}
\frac{1}{n}\sum_{i=1}^n f_{\xi_i}(A_i y) - f_{\xi_i}(A_i x)- \nabla f_{\xi_i}(A_i x)^\top A_i (y- x) \geq \frac{1}{n}\sum_{i=1}^n\frac{\mu}{2}(y- x)^\top A_i^\top A_i (y-x).
\end{equation*}
Since $\nabla \hat F_{nm}(x)^\top =  \frac{1}{n}\sum_{i=1}^n (A_i^\top \nabla f_{\xi_i}(A_i x))^\top = \frac{1}{n}\sum_{i=1}^n  \nabla f_{\xi_i}(A_i x)^\top A_i$, we get
\begin{equation*}
\hat F_{nm}(y) - \hat F_{nm}(x)- \nabla \hat F_{nm}(x)^\top (y-x) \geq \frac{1}{n}\sum_{i=1}^n\frac{\mu}{2}\norm{A_i (y- x)}_2^2 \geq  \frac{\mu}{2} \norm{\frac{1}{n}\sum_{i=1}^n A_i(y-x)}_2^2.
\end{equation*}
Let $z$ be a point in $\mathcal{X}^*$, we have
\begin{equation}
\label{eq:satisfyQG}
\begin{split}
    \hat F_{nm}(x) - \hat F_{nm}(z)\geq  \frac{\mu}{2} \norm{\frac{1}{n}\sum_{i=1}^n A_i(x-z)}_2^2 &\geq \frac{\mu\theta(\frac{1}{n}\sum_{i=1}^n A_i)}{2}\norm{x-z}_2^2 \\
    &\geq \min_{z\in\mathcal{X}^*}\frac{\mu\theta(\frac{1}{n}\sum_{i=1}^n A_i)}{2}\norm{x-z}_2^2.
    \end{split}
\end{equation}
Here $\theta(A)$ is the smallest non-zero singular of $A$. Thus $\hat F_{nm}(x)$ satisfies quadratic growth condition for any $n$ and $m$. A special case is when $n=m=1$, i.e., a strongly convex objective composed with  a linear function satisfies QG condition.

\paragraph{Some Strictly Convex Functions Composed with Linear Function on a Compact Set} Consider Example 2, the logistic regression problem with the objective 
\begin{equation*}
F(x)=\EE_{\xi=(a,b)}\log(1+\exp(-b\EE_{\eta|\xi}[\eta]^Tx)),
\end{equation*} 
where $a\in\RR^d$ is a random feature vector and $b\in\{1,-1\}$ is the label, $\eta=a+\cN(0,\sigma^2 I_d)$ is a perturbed noisy observation of the input feature vector $a$. Its empirical objective function $\hat F_{nm}(x)$ is given by
\begin{equation*}
    \hat F_{nm} (x) = \frac{1}{n} \sum_{i=1}^n \log\bigg(1+\exp\bigg(-b_i \frac{1}{m}\sum_{j=1}^m \eta_{ij}^\top x\bigg)\bigg).
\end{equation*}
where $\EE \eta_{ij} = a_i$.
Here $f_{\xi_i}(u) = \log\big(1+\exp(b_i u)\big)$. $\hat F_{nm}(x) = 1/n \sum_{i=1}^n f(u_i)$, where $f(u)= \log\big(1+\exp(u)\big)$ is strictly convex, and $u_i = \frac{1}{m} \sum_{j=1}^m \eta_{ij}^\top x$ is bounded for any $x\in\mathcal{X}$ and realization $\eta_{ij}$. It is easy to verify that on any compact set, $f(u)$ is strongly convex. The strong convexity parameter is related to the compact set. With~(\ref{eq:satisfyQG}), $\hat F_{nm}(x)$ satisfies the QG condition.

Note that the result is not necessarily true for all strictly convex function. For instance, $\norm{x}_2^4$ is strictly convex, but $\norm{Ax}_2^4$ does not satisfies quadratic growth condition on any compact set containing $x=0$.

\section{Other Results on Regularized SAA}
\label{app:regularied}

The Theorem~\ref{thm:SAA_error_bound} discuss the sample complexity of SAA for strongly convex and QG condition cases. We show that the result obtained in Theorem~\ref{thm:SAA_error_bound} can be used to obtain dimensional free sample complexity for general convex objective by adding $l_2$-regularization.

\begin{lm}[\cite{shalev2010learnability}]
\label{lm:sctc}
Consider a stochastic convex optimization problem: 
\begin{equation*}
\min_{x\in\mathcal{X}} G(x),
\end{equation*}
where $G(x)$ is the expectation over some convex random function.
Suppose that the decision set $\mathcal{X}\in \RR^d$ has bounded diameter $D_\mathcal{X}$. Denote $G_\mu(x) := G(x)+\frac{\mu}{2}\norm{x}_2^2$, where $\mu>0$ is a strongly convex parameter. Denote $\hat G(x)$ as the SAA counterpart of $G(x)$, $x^*\in \argmin_{x \in \mathcal{X}} G(x)$, $\hat x \in \argmin_{x \in \mathcal{X}} \hat G(x)$, $x^*_\mu = \argmin_{x \in \mathcal{X}} G_\mu(x)$, and $\hat x_u$ the minimizer of SAA of the regularized objective, namely $\hat x_u= \argmin_{x \in \mathcal{X}} \hat G_\mu(x):=\hat G(x)+ \frac{\mu}{2}\norm{x}_2^2$.
If $\EE [G_{\mu}(\hat x_{\mu})-G_{\mu}(x^*_{\mu})] \leq \beta(\mu)$, then
\begin{equation*}
\EE [G(\hat x_\mu)-G(x^*)]  \leq \beta(\mu)+\frac{\mu}{2}D_\mathcal{X}^2.
\end{equation*}
\end{lm}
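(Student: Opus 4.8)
The plan is to convert the excess risk of the regularized SAA solution $\hat x_\mu$ measured against the \emph{unregularized} population objective $G$ into its excess risk against the \emph{regularized} population objective $G_\mu$, at the cost of a $\frac{\mu}{2}D_\mathcal{X}^2$ bias term. First I would note that the minimizers $x^*_\mu = \argmin_{x\in\mathcal{X}} G_\mu(x)$ and $\hat x_\mu = \argmin_{x\in\mathcal{X}}\hat G_\mu(x)$ are well defined, since $G_\mu$ and $\hat G_\mu$ are strongly convex on the (closed, bounded) set $\mathcal{X}$. Then, using the definition $G_\mu(\cdot) = G(\cdot) + \frac{\mu}{2}\|\cdot\|_2^2$, I would write the exact identity
\begin{equation*}
G(\hat x_\mu) - G(x^*) = \big[G_\mu(\hat x_\mu) - G_\mu(x^*)\big] + \frac{\mu}{2}\big(\|x^*\|_2^2 - \|\hat x_\mu\|_2^2\big).
\end{equation*}

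Next I would bound the two pieces separately. For the first piece, since $x^*_\mu$ minimizes $G_\mu$ over $\mathcal{X}$ while $x^*\in\mathcal{X}$, we have $G_\mu(x^*_\mu) \le G_\mu(x^*)$, hence $G_\mu(\hat x_\mu) - G_\mu(x^*) \le G_\mu(\hat x_\mu) - G_\mu(x^*_\mu)$. For the second piece I would simply discard the nonpositive term $-\frac{\mu}{2}\|\hat x_\mu\|_2^2$. This yields
\begin{equation*}
G(\hat x_\mu) - G(x^*) \le \big[G_\mu(\hat x_\mu) - G_\mu(x^*_\mu)\big] + \frac{\mu}{2}\|x^*\|_2^2.
\end{equation*}
Taking expectation, invoking the hypothesis $\EE[G_\mu(\hat x_\mu) - G_\mu(x^*_\mu)] \le \beta(\mu)$, and bounding $\|x^*\|_2^2 \le D_\mathcal{X}^2$ gives $\EE[G(\hat x_\mu) - G(x^*)] \le \beta(\mu) + \frac{\mu}{2}D_\mathcal{X}^2$, as claimed.

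The only delicate point — and the place I expect to need an explicit normalization — is the final estimate $\|x^*\|_2^2 \le D_\mathcal{X}^2$. The finite-diameter assumption on $\mathcal{X}$ alone does not bound $\|x^*\|_2$; one needs the regularizer to be anchored at a point of $\mathcal{X}$ (equivalently, to assume without loss of generality that $0\in\mathcal{X}$, or to replace $\frac{\mu}{2}\|x\|_2^2$ by $\frac{\mu}{2}\|x-\bar x\|_2^2$ for some fixed $\bar x\in\mathcal{X}$), so that $\|x^*\|_2 = \|x^*-\bar x\|_2 \le D_\mathcal{X}$. Under that normalization the identity and both inequalities above hold verbatim and the proof is complete; everything apart from this bookkeeping step is elementary algebra.
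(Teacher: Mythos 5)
Your argument is correct and is essentially identical to the paper's (commented-out) proof: both chain $\beta(\mu)\geq \EE[G_\mu(\hat x_\mu)-G_\mu(x^*_\mu)]\geq \EE[G_\mu(\hat x_\mu)-G_\mu(x^*)]$, expand $G_\mu$, drop the nonnegative $\frac{\mu}{2}\|\hat x_\mu\|_2^2$, and bound $\|x^*\|_2^2\leq D_\mathcal{X}^2$. Your closing remark is also apt --- the paper makes the same implicit normalization ($0\in\mathcal{X}$, or a regularizer anchored at a point of $\mathcal{X}$) in its final inequality without stating it.
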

\begin{remark}
This theorem shows that the minimum point $\hat x_\mu$ to a $l_2$-regularized empirical function $\hat G_\mu$ could be a good solution to the original convex function $G(x)$ as long as one selects $\mu$ properly. Note that $\hat x_\mu$ might not be a minimum point of the empirical function $\hat G(x)$. In CSO case, according to Theorem \ref{thm:SAA_error_bound}, if $F(x)$ is convex, the expected error of SAA method for $\min_{x\in \mathcal{X}} F(x)+\frac{\mu}{2}\norm{x}_2^2$ is bounded by $\beta(\mu) = \frac{4L_f^2 L_g^2}{\mu n}+ 2\Delta(m)
$. Then,
$
\EE F(\hat x_{nm}) - F(x^*) \leq \frac{4L_f^2 L_g^2}{\mu n}+\frac{\mu}{2}D_\mathcal{X}^2+2\Delta(m).
$
Minimizing over $\mu$, and by Markov inequality, we obtain,
\begin{equation*}
 \PP(F(\hat x_{nm}) - F(x^*)\geq \eps)\leq \frac{2\sqrt{2}L_f L_g D_\mathcal{X}}{\sqrt{n\eps^2} }+\frac{2\Delta(m)}{\eps}.   
\end{equation*}
We notice that the outer sample size, $n=\cO(1/\eps^2)$,  is  dimensional free, while in Theorem \ref{thm:saa}, $n=\cO(d/\eps^2)$, depends linearly in dimension; the inner sample size $m$ is not affected. For high-dimensional problems, adding regularization is sometimes more favorable as it lowers the sample complexity by $d$ and also helps boosting the convergence when solving the SAA. 
\end{remark}
\if 0 
\begin{proof}
\begin{equation*}
\begin{aligned}
~& \beta(\mu) \geq\EE G_\mu(\hat x_\mu)-G_\mu(x^*_\mu) 
 \geq \EE G_\mu(\hat x_\mu)-G_\mu(x^*)\\
 = &\EE G(\hat x_\mu) +\frac{\mu}{2}\norm{\hat x}_2^2- (G(x^*)  +\frac{\mu}{2}\norm{x^*}_2^2)
\geq \EE G(\hat x_\mu) - (G(x^*)  +\frac{\mu}{2}\norm{x^*}_2^2)\\
 \geq &\EE G(\hat x_\mu) - G(x^*)-\frac{\mu}{2}D_\mathcal{X}^2.
\end{aligned}
\end{equation*}
The first inequality is by assumption on $\beta(\mu)$. Switching side we get the desired result.
\end{proof}
\fi

\section*{Acknowledgments}
We would like to acknowledge Alexander Shapiro and Lin Xiao for fruitful discussions and the reviewers for their helpful comments.

\bibliographystyle{siamplain.bst}
\bibliography{ref.bib}
\end{document}